\newcommand \Z{\mathbb Z}
\newcommand \N{\mathbb N}
\newcommand \cN{\mathscr N}
\newcommand \C{\mathbb C}
\newcommand \R{\mathbb R}
\newcommand \Q{\mathbb Q}
\newcommand \gO {\mathcal O}
\newcommand \Ld {\mathscr {L}}
\newcommand \D {\mathbb{D}}
\newcommand \de {\Delta}
\newcommand \U {\mathscr U}
\newcommand \CP {\mathbb P}
\newcommand \A {\mathbb A}
\newcommand \T {\mathbb{T}}
\newcommand \G {\mathbb{G}}
\newcommand \X {\mathscr{X}}
\newcommand \cZ {\mathscr{Z}}
\newcommand \cD {\mathcal{D}}
\newcommand \fl {\longrightarrow}
\newcommand \eps {\varepsilon}
\newcommand \De {\Delta}
\DeclareMathOperator{\Pic}{Pic}
\DeclareMathOperator{\ord}{ord}
\DeclareMathOperator{\Hom}{Hom}
\DeclareMathOperator{\Bl}{Bl}
\DeclareMathOperator \Id {Id}
\DeclareMathOperator \GL {GL}
\DeclareMathOperator \Div {Div}
\DeclareMathOperator{\Star}{Star}
\DeclareMathOperator{\Spec}{Spec}
\DeclareMathOperator{\Spf}{Spf}
\DeclareMathOperator{\an}{an}
\DeclareMathOperator{\Sk}{Sk}
\DeclareMathOperator{\red}{red}
\DeclareMathOperator{\val}{val}
\DeclareMathOperator{\Trop}{Trop}
\newtheoremstyle{plain2}    % to uniformize styles with the paragraphs
{}            % ABOVESPACE (empty value is the same as default value)
{}            % BELOWSPACE (empty value is the same as degfault value)
{\itshape}    % BODYFONT  (\itshape, \bfseries, \normalfont)
{}            % INDENT (empty value is the same as 0pt, \parindent is the same as the standard indent for new paragraphs)
{\bfseries}   % HEADFONT
{.}           % HEADPUNCT
{5pt plus 1pt minus 1pt}  % HEADSPACE (leave 5pt plus 1pt minus 1pt)
{{\thmnumber{#1} \thmname{#2}{\thmnote{ (#3)}}}}          %CUSTOM-HEAD-SPEC
\theoremstyle{plain2}
\newtheorem{theo}[equation]{Theorem}
\newtheorem{prop}[equation]{Proposition}
\newtheorem{lem}[equation]{Lemma}
\newtheorem{cor}[equation]{Corollary}
\theoremstyle{definition}
\newtheorem{defn}[equation]{Definition}
\newtheorem{ex}[equation]{Example}
\newtheorem{rem}[equation]{Remark}
\theoremstyle{plain}
\newtheorem*{thmA}{Theorem A}
\newtheorem*{corC}{Corollary C}
\theoremstyle{plain2}
\newtheorem{thmx}{Theorem}
\newtheorem{corx}[thmx]{Corollary}
\numberwithin{equation}{subsection} 
\tikzset{
    state/.style={
           rectangle,
           rounded corners,
           draw=gray, %very thick,
           minimum height=1em,
           %inner sep=1.5pt,
           %text centered,
           },
}
\newcommand{\cV}{\ensuremath{\mathscr{V}}}
\newcommand{\cY}{\ensuremath{\mathscr{Y}}}
\newcommand{\cX}{\ensuremath{\mathscr{X}}}
\newcommand{\cU}{\ensuremath{\mathscr{U}}}
\newcommand{\cW}{\ensuremath{\mathscr{W}}}
\title{Toric geometry and integral affine structures \\ in non-archimedean mirror symmetry}
\author{Enrica Mazzon, Léonard Pille-Schneider}
\begin{document}
\date{}
\nocite{*}
\maketitle

\begin{abstract}
We study integral dlt models of a proper $\C((t))$-variety $X$ along a toric stratum of the special fiber. We prove that the associated Berkovich retraction - from the non-archimedean analytification of $X$ onto the dual complex of the model - is an affinoid torus fibration around the simplex corresponding to the toric stratum, which extends results in \cite{NXY}. 
%This allows us to construct new types of non-archimedean retractions for maximally degenerate families of quartic K3 surfaces and quintic $3$-folds, by gluing several non-archimedean SYZ fibrations, each one toric along a codimension one stratum. We then show that the new retractions induce the same singular integral affine structures that arise on the dual complex of toric degenerations in the Gross-Siebert program, as well as on the Gromov-Hausdorff limit of the family.
As a byproduct, we construct new types of non-archimedean retractions for maximally degenerate families of quintic $3$-folds. These induce on $\mathbb{S}^3$ the same singular integral affine structures that arise on the dual complex of toric degenerations in the Gross-Siebert program, as well as on the Gromov-Hausdorff limit of the family.
\end{abstract}

\tableofcontents

\section*{Introduction}
Let $(X, L)$ be a polarized family of $n$-dimensional Calabi--Yau varieties over the punctured disk $\D^* \subset \C$; each fiber $X_t$ additionally carries a unique Ricci-flat Kähler-metric $\omega_t \in c_1(L_t)$, according to the celebrated Yau theorem.
We will be primarily interested in such families that are \emph{maximally degenerate}, in the following sense: the monodromy acting on the degree $n$ cohomology of the general fiber has a Jordan block of maximal (that is, $n+1$) size. 

In this setting, the Strominger-Yau-Zaslow conjecture predicts that the general fiber $X_t$ admits a fibration $\rho_t : X_t \fl S$, called an \emph{SYZ fibration}, whose base $S$ is a real $n$-dimensional topological manifold (even a sphere if the $X_t$ are strict Calabi--Yau), and whose fibers are special Lagrangian tori away from a discriminant locus of codimension $2$ in $S$. 
\\An SYZ fibration endows $S$ with a singular \emph{integral affine structure}, induced by action-angle coordinates. This means that, in the complement of the discriminant locus of the fibration, the transition functions between charts of $S$ are affine transformations in $\textrm{GL}_n(\mathbb{Z}) \ltimes \mathbb{R}^n$.
\\Moreover, the limit for $t \rightarrow 0$ of the metric spaces $(X_t,\omega_t)$ should correspond to the metric collapse of the torus fibers of $\rho_t$. Then, the (suitably rescaled) Gromov-Hausdorff limit of $(X_t,\omega_t)$ should coincide with the space $S$, endowed with a metric which in affine coordinates satisfies a real Monge--Ampère equation away from the discriminant locus.
%\\Moreover, passing to the limit $t \rightarrow 0$ should correspond to the metric collapse of the torus fibers of $\rho_t$, so that $S$ should coincide with the (suitably rescaled) Gromov-Hausdorff limit of the metric spaces $(X_t,\omega_t)$, endowed with a metric on $S$ satisfying in affine coordinates a real Monge-Ampère equation away from the discriminant locus.
\vspace{10pt}

While some examples of special Lagrangian torus fibrations can be produced, dealing with the general case seems very difficult. The insight of Kontsevich and Soibelman is to replace the above conjecture by an analogous one in the non-archimedean world, and to interpret the latter as an asymptotic limit of the complex phenomenon when $t \rightarrow 0$. We now elaborate on this idea.

Consider the field $K=\C((t))$ of Laurent power series, which comes equipped with the non-archimedean valuation $\ord_t$, order of vanishing at $t=0$; the family $X$ can be viewed as a variety over $K$. Within this framework, we associate with $X/K$ a topological space, called the \emph{Berkovich analytification} $X^{\an}$ of $X$; this is a space of real (semi)valuations on $X$ (see \cref{subsec:Berkovich geometry}).
\\A way to construct and visualize points of $X^{\an}$ is to consider \emph{models} of $X$ over $R=\C[[t]]$. Indeed, any suitably regular (dlt) model $\X$ of $X$ has an associated simplicial subset $\Sk(\X) \subset X^{\an}$, called the \emph{skeleton} of $\X$ and homeomorphic to the dual (intersection) complex of the special fiber of $\X$, and a continuous retraction $\rho_\X: X^{\an} \rightarrow \Sk(\X)$, inducing a homotopy equivalence (see \cref{subsec:skeletons,subsec:Berkovich retractions} for more details). It follows that $X^{\an}$ encodes geometric information coming from degenerations of $X$ and about combinatorics of models of $X$.
\\Among various models and associated skeletons, minimal (in the sense of MMP) models $\X$ of $X$ determine a canonical skeleton $\Sk(X)=\Sk(\X)$, called the \emph{essential skeleton} of $X$ and independent of the choice of the minimal model. The essential skeleton and the retractions $\rho_{\X}: X^{\an} \rightarrow \Sk(\X)=\Sk(X)$, which do depend on $\cX$, are of particular relevance in the non-archimedean reformulation of the SYZ conjecture as the following conjectures point out. 
\vspace{15pt}

The key idea is that Berkovich theory should allow to construct (non-unique) non-archimedean avatars %$\rho : X^{\an} \fl \Sk(X)$ 
of SYZ fibrations.
%, and induce singular affine structures on $\Sk(X)$. 
More precisely, in \cite{KontsevichSoibelman} Kontsevich and Soibelman conjecture that the essential skeleton $\Sk(X)$ can be endowed with an integral affine structure outside of a codimension 2 piecewise-affine subset $\Gamma \subset \Sk(X)$, such that the following holds. The space $\Sk(X)$ can be recovered from the Kähler geometry of $X$, as a (suitably rescaled) Gromov-Hausdorff limit of the metric spaces $(X_t, \omega_t)$. Moreover, the limiting metric on $\Sk(X)$ should satisfy the following: outside of $\Gamma$, the metric is given locally in affine coordinates by the Hessian of a convex function, satisfying a real Monge-Ampère equation. It is furthermore expected that this limiting affine structure can be recovered by a map $\rho: X^{\an} \rightarrow \Sk(X)$, which is a non-archimedean analog of the SYZ fibration. 
\\The construction of the above fibration is made more rigorous in \cite{NXY}. The authors prove that the retraction $\rho_\X$ associated with a minimal model $\X$ is an \emph{affinoid torus fibration} away from a codimension 2 locus of the base - the non-archimedean analog of a smooth torus fibration - and induces an integral affine structure there, as the SYZ heuristic and the conjecture by Kontsevich and Soibelman predict. Here in particular the transition functions of the integral affine structure are in $\textrm{GL}_n(\mathbb{Z}) \ltimes \mathbb{Z}^n$.

The local model for affinoid torus fibrations is the tropicalization map $\textrm{val}:\mathbb{T}^{\an} \rightarrow N_\mathbb{R}$, where $\mathbb{T}=\mathbb{G}_{m,K}^n$ and $N$ is the cocharacter lattice of the torus $\mathbb{T}$. Global examples of such retractions are given as follows: given a (non-proper) toric variety $\cY$ over $R=\mathbb{C}[[t]]$ which is a model of $\mathbb{T}$, the retraction $\rho_\cY$ is a restriction of $\mathrm{val}$. This reduces the proof of the result in \cite{NXY} to showing that minimal models $\X$ are in fact toric along one-dimensional strata of the special fiber when the latter is reduced.
\vspace{10pt}

At this point the base of the SYZ fibration appears to be well identified - as the essential skeleton or equivalently the dual complex of any minimal model - while the affine structure and the metric are not. In fact, the construction in \cite{NXY} yields integral affine structures that depend on the additional choice of a model, while the Kontsevich--Soibelman conjecture predicts uniqueness, at least of the metric space.
Moreover, the location and the nature of the singularities obtained in \cite{NXY} differ from previous constructions in mirror symmetry. 
\\Such discrepancy already appears in the case of quintic threefolds in $\mathbb{P}^4$. On one side, the constructions in \cite{Ruan2001,Gross2001} - using symplectic and toric geometry respectively - yield an affine structure on a triangulated $3$-sphere whose singularities are located away from the vertices. On the other side, the discriminant locus of the non-archimedean SYZ fibration constructed in \cite{NXY} passes through the vertices of the triangulation.
Moreover, the work by \cite{Li} for degenerations of Fermat hypersurfaces, and more generally by \cite{HJMM}, provides evidence that the affine structure on the Gromov-Hausdorff limit of the Kähler Ricci-flat metric on the nearby fibers coincides with the one constructed in \cite{Gro05}, and also has no singularities at the vertices of the natural triangulation of $\Sk(X)$.
%Thus, the integral affine structure induced by the retraction associated with a minimal model does not seem to match the known or expected integral affine structures on the essential skeleton.
\vspace{10pt}

In this paper we deal with the apparent incompatibility raised by the expected affine structures on the essential skeleton and the ones induced by the non-archimedean SYZ fibration in the sense of \cite{NXY}. To this purpose, we further develop the non-archimedean approach, and produce examples of a new type of non-archimedean retractions.
Our main result is a generalization of \cite[Proposition 5.4]{NXY}:

\begin{thmx}\label{intro:main thm Z}
Let $X/K$ be a smooth variety, and $\X/R$ be a dlt model of $X$ with reduced special fiber $\X_k= \sum_i D_i$, such that each $D_i$ is a Cartier divisor.
\\Let $Z$ be a stratum of $\X_k$, such that:
\begin{itemize}
\item $Z$ is a proper toric variety with toric boundary $\sum_{i\,|\, Z \not\subset D_i} Z \cap D_i$;
\item the conormal bundle $\nu_{Z/ \X}^*$ is a nef vector bundle on $Z$;
\item the intersection of $Z$ with any irreducible component of $\X_k$ is connected.
\end{itemize}
Then $\X$ is toric along $Z$ (in the sense of \cref{toricdef}).
%the formal completion $\widehat{\X_{/ Z}}$ of $\X$ along $Z$ is toric (in the sense of \cref{toricdef}).
%Moreover, $\widehat{\X_{/ Z}}$ is regular and the intersection of $Z$ with other components of $\X_k$ is simple normal crossing.
\end{thmx}
Note that the dlt assumption, combined with the fact that $Z$ is toric, imply that $Z$ is smooth (see \cref{rem dlt}). 
%The assumption that the irreducible components of $\X_k$ are Cartier furthermore ensures that $Z$ is (a connected component of) the complete intersection of the $D_j$'s containing $Z$, so that $\nu^*_{Z/\X} = \bigoplus_{j \in J} \gO_Z(-D_j)$, and the nef assumption simply means that each of the $\gO_Z(-D_j)$ is a nef divisor.
By assumption, $Z$ is (a connected component of) the intersection of the divisors $D_j \subset \X_k$ containing $Z$ and the $D_j$'s are Cartier, so that $\nu^*_{Z/\X} = \bigoplus_{j \in J} \gO_Z(-D_j)$ and the nef assumption simply means that each of the $\gO_Z(-D_j)$ is a nef divisor.
\\Using the positivity of the conormal bundle, we then prove that in a formal neighbourhood of $Z$, $\X$ is isomorphic to the normal bundle of $Z$, which is a toric variety. This is similar in spirit to the classical work of \cite[Satz 7, p. 363]{Gra} on holomorphic tubular neighbourhoods, as well as Grothendieck's algebraization theorem \cite[Theorem 5.1.4]{EGA3.1}; the key technical point being the vanishing of the higher cohomology groups of the powers of $\nu^*_{Z/ \X}$ which allows us to extend combinatorial data from $Z$ to a formal neighbourhood. Note that if $Z$ is a rational curve, then the positivity assumption can always be achieved via a finite number of blow-ups (\cite[Proposition 5.4]{NXY}). 
\\The main consequence of the above theorem is the following:
\begin{corx} \label{intro:cor}
The Berkovich retraction $\rho_{\X} : X^{\an} \rightarrow \Sk(\X)$ is an affinoid torus fibration over $\Star(\tau_Z)$.
\end{corx}
The subset $\Star(\tau_Z)$ is the open star of the face determined by $Z$ (see \cref{defn:star}). \cref{intro:main thm Z} and \cref{intro:cor} show that the discriminant locus of the retraction $\rho_{\X}$ measures the defect of a stratum to being toric. 
\vspace{10pt}

The connection between toric geometry and mirror symmetry has been explored in several ways; in particular, the Gross--Siebert program 
%where the authors 
considers 
%what they define to be 
\emph{toric degenerations} of Calabi--Yau varieties.
Such degenerations satisfy assumptions similar to the ones in  \cref{intro:main thm Z}, as the irreducible components of the special fiber are all assumed to be toric varieties; however, note that they are not assumed to be $\Q$-Cartier, so that there may not be an associated Berkovich retraction. In \cite{GrossSiebert2006} the authors then glue together the fans of the various components of the special fiber to combinatorially construct an affine structure on the skeleton of the degeneration; see \cref{rem:K3} for an example in dimension 2. In particular, we prove in \cref{cor fan structure} that this construction of integral affine structure can be performed using Berkovich retractions.
\\An important class of examples of toric degenerations is provided by Batyrev-Borisov degenerations \cite{BB}, as studied for instance in \cite{Gro05}: those are complete intersections in a toric variety described by combinatorial data. Since such a degeneration $X$ is embedded in a toric variety, one can define its tropicalization $\Trop(X)$, which is a balanced polyhedral complex, and there exists a continuous tropicalization map $\val : X^{\an} \fl \Trop(X)$.
In this setting, Yamamoto \cite{Ya} constructs a tropical contraction $\delta$ from the tropicalization of the variety to its essential skeleton, which induces on $\Sk(X)$ the singular integral affine structure defined in \cite{Gro05}. It then turns out that applying \cref{intro:main thm Z}, one can prove that the composition:
$$\rho := \delta \circ \val : X^{\an} \fl \Sk(X)$$
is indeed an affinoid torus fibration in codimension $1$, and induces on $\Sk(X)$ the singular affine structure defined in \cite{Gro05}.
The case of hypersurfaces of the form:
\begin{equation} \label{equ:degeneration}
X = \{ z_1\dots z_{n+1} + tF =0 \} \subset \CP^{n+1}_K,\tag{$\star$}\end{equation} 
where $F$ is a general homogenenous polynomial of degree $n+2$, is treated in \cite[Theorem A]{PS}, while the general case of complete intersections in a toric variety will appear in \cite{Ya2}. This bridges the gap between the Gross-Siebert framework and the non-archimedean approach of Kontsevich-Soibelman for constructing integral affine structures on the dual complex of Batyrev-Borisov degenerations.
\vspace{10pt}

In the case of quintic 3-fold degenerations of the form \ref{equ:degeneration}, we also provide a different, more \emph{ad hoc} construction of a global retraction:
\begin{thmx} \label{intro:main quintic}
There exists a continuous retraction $\pi : X^{\an} \rightarrow \Sk(X)$ such that
\begin{itemize}
\itemsep0pt
    \item $\pi$ can be written as a composition $\pi' \circ \rho_{\X'}$, with $\X'$ being an snc model of $X$ and $\pi' : \Sk(\X') \rightarrow \Sk(X)$ a piecewise-linear map;
    
    \item $\pi$ is an affinoid torus fibration outside a piecewise-linear locus $\Gamma$, that has codimension $2$ and is contained in the $2$-skeleton of $\Sk(X)$;

    %\item the singular locus $\Gamma$ is a trivalent graph contained in the 2-skeleton of $\Sk(X)$, and was previously described in \cite{Gross2001};
    
    \item $\pi$ induces an integral affine structure on $\Sk(X) \setminus \Gamma$, which is the same as the one constructed in \cite{Gro05,Li,HJMM}.
\end{itemize}
\end{thmx}
Notably, while this retraction agrees with the one from \cite[Theorem A]{PS} over the complement of the discriminant locus, both retractions differ over the discriminant locus, as was pointed out to us by Yuto Yamamoto, see \cref{sec:comparison}. This implies, in particular, that in the non-archimedean setting an affinoid torus fibration does not have in general a unique compactification over a discriminant locus of codimension $2$.
\\Finally, the non-archimedean retraction $\pi$ of \cref{intro:main quintic} yields an integral affine structure whose discriminant locus is of codimension 2. We recall that the $2$-codimensionality of the discriminant is expected from the SYZ heuristic at the topological level; in \cref{intro:main quintic}, this is achieved by construction of $\pi'$, building on the results in \cite{NXY} and on \cref{intro:main thm Z}.
In the setting of the Gross--Siebert program, given a singular integral affine manifold $B$, one can produce, using classical moment maps, a topological torus fibration over $B$, which however has a discriminant locus of codimension 1, see \cite[§2.1]{RS20}. In the series of recent or upcoming papers \cite{RZ21a,RZ21,RZ}, Ruddat and Zharkov develop a strategy which solves this problem - at least at the topological level - and works in arbitrary dimension. 
%More precisely, the authors are able to construct a torus fibration with discriminant locus of codimension 2 in $B$, isotopic to the previous fibration, and with a symplectic structure on the complement of a codimension 2 subset in the total space. Note that the total space of this resulting fibration will not be a manifold in general, as it could have orbifold singularities.
%\subsection{Outline of the paper}
\vspace{10pt} 
\\Let us briefly describe the organization of the paper. 
\\In \cref{Sec:preliminaries}, we introduce some notation and collect some basic facts about toric varieties. We also define Berkovich spaces and recall the definition of skeletons and retractions we will be using.
\cref{Section toric thm} is devoted to the proof of \cref{intro:main thm Z}. Finally, in \cref{sec:quintic 3-fold} we study in detail the example of the degeneration of quintic 3-folds and prove \cref{intro:main quintic} by applying the results of the previous sections. We also compare our results to various constructions existing in the literature.

\vspace{10pt} \textbf{Acknowledgements.} 
We would like to thank S\'{e}bastien Boucksom, Antoine Ducros and Mirko Mauri for their comments on the first version of this paper. We are also grateful to Omid Amini, Laurent Moret-Bailly, Johannes Nicaise, Helge Ruddat, Yuto Yamamoto for helpful conversations.
Enrica Mazzon was partially supported by Max Planck Institute for Mathematics in Bonn during the preparation of this paper.

\vspace{10pt}
\section{Preliminaries} \label{Sec:preliminaries}
Throughout this paper, $k$ is an algebraically closed field of characteristic zero, $K = k((t))$ and $R= k[[t]]$. The field $K$ is endowed with the non-archimedean absolute value $\lvert \cdot \rvert = e^{-\ord_t}$, which makes $K$ a complete non-archimedean field with valuation ring $R$.

\subsection{Models} \label{prel:models}
Let $X$ be a separated scheme of finite type over $K$. A separated flat $R$-scheme $\X$ of finite type together with an isomorphism of $K$-schemes $\X \times_{R} K \simeq X$ is called an $R$-model of $X$. We denote by $\X_k = \X \times_R k$ the special fiber of $\X$, and by $\Div_0(\X)$ the group of Weil divisors on $\X$ supported on the special fiber.

If $Y$ is a normal variety and $D$ a Weil divisor on $Y$, whose irreducible decomposition is $D= \sum_{i \in I} a_i D_i$, a stratum of $D$ is a connected component of an intersection $D_J = \cap_{j \in J} D_j$ for some $J \subset I$. An open stratum of $D$ is a stratum $Z$ minus the irreducible components of $D$ not containing $Z$; this is denoted by $\mathring{Z}$.

\begin{defn}{\label{defn:dlt}}
Let $\X /R$ be a model of $X$. We say that $\X$ is a \emph{dlt} (divisorially log terminal) model of $X$ if the following conditions hold:
\begin{itemize}
\itemsep0em
    \item[-] the pair $(\X, \X_{k, \red})$ is log canonical in the sense of the Minimal Model Program (see \cite{KM});
    \item[-] the pair $(\X, \X_{k, \red})$ is simple normal crossing at the generic points of log canonical centers of $(\X, \X_{k,\red})$.
\end{itemize}
We will additionally say that a dlt model $\X$ is \emph{good} if each irreducible component of $\X_{k,\red}$ is $\Q$-Cartier.
\end{defn}
We will not give a precise definition of log canonical centers here, and refer the reader to \cite{KM}. However, if $(\X, \X_{k, \red})$ is dlt and defined over an algebraic curve - which is the most relevant case for applications - then the log canonical centers of $(\X, \X_{k, \red})$ are precisely the strata of $\X_k$ by \cite[4.16]{Kollar2013}, so that a dlt model $\X$ is simple normal crossing at the generic points of the strata of $\X_k$. If $\X_k$ is reduced, it follows from the approximation arguments of \cite[Corollary 4.4]{NXY} that this holds in the general case as well.
We refer the reader to \cite[\S 1.12-1.14]{NXY} for an overview on existence results of such models.

\subsection{Toric geometry}
Throughout this section, let $Z$ be an $r$-dimensional (normal) proper toric variety over $k$, in the sense of \cite{KK}.
This means that $Z$ is a normal $k$-variety, containing the torus $\T = \G^r_{m,k}$ as an open subset, and such that the torus action onto itself extends to an action on $Z$. The complement $\Delta_Z = Z \setminus \T$ is a reduced anticanonical Weil divisor in $Z$, called the toric boundary of $Z$; we write it as the sum of its irreducible components $\Delta_Z = \sum_{l \in L} Z_l$.
We write $N = \Hom(\G_{m,k}, \T)$ for the free abelian group of 1-parameter subgroups of $\T$, and $N_{\R} = N \otimes \R$.

The variety $Z$ can be described by a combinatorial object, called its \emph{fan} $\Sigma$. The fan lives inside the finite-dimensional vector space $N_{\R}$; $\Sigma$ is a collection of strictly convex rational polyhedral cones $ \Sigma = \{ \sigma \}_{\sigma \in \Sigma}$ inside $N_{\R}$, stable under intersection and such that each face of a cone in $\Sigma$ is itself in $\Sigma$. The cones of $\Sigma$ are in inclusion-reversing bijection with the strata of $\Delta_Z$; this induces in particular a bijection between the set of rays (i.e. 1-dimensional cones) of $\Sigma$ and the irreducible components of $\Delta_Z$.
\\The fan $\Sigma$ encodes various types of algebro-geometric information about $Z$. For instance, the variety $Z$ is smooth if and only if each top-dimensional cone of $\Sigma$ is $\GL(N)$-isomorphic to the standard octant $\R^r_{\geqslant 0} \subset \R^r
$.

Furthermore, in the case where $Z$ is smooth (which implies it has simple normal crossing boundary), the fan allows us to give a rather explicit description of the Picard group of $Z$. Indeed, each Cartier divisor can be moved via the torus action to a $\T$-invariant Cartier divisor, which is thus supported on the boundary. This provides a canonical set of generators of $\Pic(Z)$, and the kernel can be described as follows.
\\Write $
%V= 
\Div^{\T}(Z) = \oplus_{l \in L} \Z Z_l \simeq \Z^L$ the abelian group of Weil divisors supported on the boundary.
The canonical map $q: \Div^{\T}(Z) \longrightarrow \Pic(Z)$ sends a divisor to its class; the map $p : M \fl \Div^{\T}(Z)$ sends a monomial $z^m$ to the principal divisor $\textrm{div}(z^m)$.
%, yields the following:
%The canonical map $q: V \longrightarrow \Pic(Z)$, sending a divisor to its class, is surjective; and since every $\T$-invariant function of $Z$ is a monomial, the map $p : M \fl \Div^{\T}(X)$, sending a monomial $z^m$ to the principal divisor $V(z^m)$, yields the following:
\begin{lem}[{\cite[3.4]{Fu}}]
The following sequence
$$0 \longrightarrow M \xrightarrow{ p} \Div^{\T}(Z) \xrightarrow{q} \Pic(Z) \longrightarrow 0$$
is exact.
\end{lem}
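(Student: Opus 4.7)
The plan is to verify exactness at each of the three positions, in every case exploiting that the open orbit $\T \subset Z$ has complement equal to the support of the toric boundary $\Delta_Z$, and that $\T = \mathrm{Spec}\, k[M]$ is a factorial affine variety with explicitly known units.

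For injectivity of $p$, I would start from $m \in M$ with $\mathrm{div}(z^m) = 0$. Then $z^m$ has neither zeros nor poles along any boundary component $Z_l$, hence extends to a regular nowhere-vanishing function on the proper variety $Z$. Properness and reducedness force both $z^m$ and $z^{-m}$ to be global constants, and restricting the character $z^m$ to $\T$ then gives $m=0$.

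For exactness in the middle, the inclusion $\mathrm{Im}(p) \subset \ker(q)$ is immediate, since $p(m)$ is by definition a principal divisor. Conversely, if $D = \sum_{l} a_l Z_l$ is $\T$-invariant with $q(D)=0$, then $D = \mathrm{div}(f)$ for some $f \in k(Z)^{*}$; since $\mathrm{supp}(D) \subset \Delta_Z$, the function $f$ is regular and nowhere vanishing on $\T$, so $f|_\T \in k[M]^{*}$. Unique factorization in $k[M]$ gives $k[M]^{*} = k^{*} \cdot \{z^m : m \in M\}$, so $f = c \cdot z^m$ and therefore $D = \mathrm{div}(z^m) = p(m)$.

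For surjectivity of $q$, I would invoke the standard excision exact sequence
$$\bigoplus_{l \in L} \Z\, Z_l \longrightarrow \mathrm{Cl}(Z) \longrightarrow \mathrm{Cl}(\T) \longrightarrow 0$$
associated to the open immersion $\T \hookrightarrow Z$ whose complement has pure codimension one with irreducible components the $Z_l$'s. Factoriality of $k[M]$ yields $\mathrm{Cl}(\T) = 0$, and smoothness of $Z$ allows the identification $\Pic(Z) = \mathrm{Cl}(Z)$; combining these shows every class in $\Pic(Z)$ admits a boundary-supported representative, which is exactly surjectivity of $q$. No step presents a real obstacle; the only mildly subtle input is the determination of the units of $k[M]$, which pins down the principal $\T$-invariant divisors as precisely the characters, and smoothness is used only at the last step to pass between $\Pic$ and $\mathrm{Cl}$.
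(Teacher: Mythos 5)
Your proof is correct and complete; the three verifications (units of a proper variety for injectivity, unique factorization in $k[M]$ for exactness in the middle, and the excision sequence for $\mathrm{Cl}$ together with $\Pic = \mathrm{Cl}$ on a smooth variety for surjectivity) are exactly the standard argument, and they match the proof in Fulton \cite[\S 3.4]{Fu}, which the paper simply cites without reproducing. The one implicit ingredient worth making explicit is that normality of $Z$ (guaranteed here by smoothness) is what lets you pass from ``no zeros or poles in codimension one'' to ``regular,'' both for $z^m$ in the injectivity step and for $f$ restricted to $\T$ in the middle step.
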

\noindent Let us rephrase this in term of coordinates, after fixing an isomorphism $N \simeq \Z^r$ and denoting $L = \{ u_1,\ldots, u_s \}$ the primitive generators of the 1-dimensional cones of $\Sigma$. Since by \cite[Lemma p.61]{Fu}, we have $\ord_{Z_l} (z^m) = \langle u_l, m \rangle$, we obtain $\textrm{div}(z^m) = \sum_{l \in L} \langle u_l, m \rangle Z_l$ and hence $p(m) = ( \langle u_l, m \rangle)_{l \in L}$. We deduce the following explicit description of $\Pic(Z)$:

\begin{cor} \label{lemma pic toric}
Let $u_l = (u_{l,1},\ldots,u_{l,r})$ for $l \in \{1,\ldots,s\}$. Then $\Pic(Z)$ is generated by the line bundles $\gO_{Z}(Z_l)$, with the $r$ relations:
$$ \gO_{Z}(\sum_{l=1}^{m} u_{l,1} Z_l) =\ldots=  \gO_{Z}(\sum_{l=1}^{m} u_{l,r} Z_l) =0.$$
In particular, the divisors in the $r$-tuple
$$\de = \sum_{l \in L} u_l \otimes Z_l \, \in  \, N \otimes \Div^{\T}(Z) \simeq (\Div^{\T}(Z))^r$$ are principal.
%In particular, the Weil divisor with $N$-coefficients: $$\de \coloneqq \sum_{l \in L} u_l \otimes Z_l$$ is principal.
\end{cor}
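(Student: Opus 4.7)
The statement is a direct consequence of the exact sequence
$$0 \longrightarrow M \xrightarrow{p} \Div^{\T}(Z) \xrightarrow{q} \Pic(Z) \longrightarrow 0$$
just recalled, so the plan is simply to unwind this sequence in the chosen coordinates. The isomorphism $N \simeq \Z^r$ induces a dual isomorphism $M \simeq \Z^r$; let $e_1, \ldots, e_r$ denote the standard basis of $M$. Since $\Pic(Z)$ is the cokernel of $p$, it is presented by generators $\gO_Z(Z_l)$ (the classes of the boundary divisors $Z_l$, which span $\Div^{\T}(Z) \simeq \Z^L$) modulo the $r$ relations $p(e_1) = \cdots = p(e_r) = 0$.

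The first step is therefore to compute $p(e_j)$ explicitly. By the formula $p(m) = (\langle u_l, m \rangle)_{l \in L}$ recalled just above the statement, we get
$$p(e_j) \;=\; \bigl(\langle u_l, e_j \rangle\bigr)_{l \in L} \;=\; (u_{l,j})_{l \in L} \;=\; \sum_{l \in L} u_{l,j}\, Z_l$$
inside $\Div^{\T}(Z)$. Pushing forward through $q$ gives the $r$ relations displayed in the statement, and this yields the claimed presentation of $\Pic(Z)$.

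For the last assertion, it suffices to reinterpret $\de$ componentwise. Under the isomorphism $N \otimes \Div^{\T}(Z) \simeq (\Div^{\T}(Z))^r$ coming from $N \simeq \Z^r$, the element $\de = \sum_{l \in L} u_l \otimes Z_l$ corresponds to the $r$-tuple whose $j$-th component is $\sum_{l \in L} u_{l,j} Z_l = p(e_j)$. Since each $p(e_j)$ lies in the image of $p$, i.e.\ is the principal divisor $\mathrm{div}(z^{e_j})$, every component of $\de$ is principal.

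The argument is essentially bookkeeping; the only potentially subtle point is to keep track of the two identifications $N \simeq \Z^r$ (used to write $u_l = (u_{l,1}, \ldots, u_{l,r})$) and $M \simeq \Z^r$ (used to identify $p(e_j)$ with the $j$-th row of the matrix $(u_{l,j})$), but the pairing $\langle u_l, e_j \rangle = u_{l,j}$ makes these compatible, so no real obstacle arises.
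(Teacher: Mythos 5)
Your proof is correct and follows essentially the same route as the paper: fix $N \simeq \Z^r$, compute $p(m) = (\langle u_l, m\rangle)_{l\in L}$ from the order formula $\ord_{Z_l}(z^m) = \langle u_l, m\rangle$, and read the presentation of $\Pic(Z)$ directly off the exact sequence. The final observation that the components of $\de$ are exactly the $p(e_j)$ (hence principal) is the same bookkeeping the paper intends.
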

We now want to describe how the fan $\Sigma$ encodes the intersection theory on $Z$. Each $1$-cycle in $Z$ being numerically equivalent to a sum of toric strata, it is enough to study the intersection numbers $(C \cdot Z_l)$, where $Z_l$ is a boundary component of $Z$ and $C$ is a 1-dimensional toric stratum, which is isomorphic to $\CP^1$ by properness.
The stratum $C$ is thus a rational curve with two marked points $p$ and $q$, which are the intersection points of $C$ with two components of $\De_Z$, denoted here by $Z_{p}$ and $Z_{q}$, with corresponding rays $\rho_p$ and $\rho_q$. The curve $C$ corresponds to a $(r-1)$-dimensional cone $\sigma_C$ of $\Sigma$, while the points $p$ and $q$ correspond to the maximal cones generated by $\langle\sigma_C,\rho_p\rangle $ and $\langle\sigma_C,\rho_q\rangle $.

\begin{lem}[{\cite[p. 99]{Fu}}] 
The primitive generators of the rays of the fan satisfy the following relation:
$$ u_{p} + u_{q} = - \sum_{u_l \in \sigma_C} (C \cdot Z_l)  u_l.$$
\end{lem}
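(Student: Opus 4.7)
The plan is to derive the relation from the fact, recorded in \cref{lemma pic toric}, that the element $\de = \sum_{l \in L} u_l \otimes Z_l$ has principal components, together with a case analysis of which boundary divisors $Z_l$ meet the curve $C$.

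First I would write $\de$ in coordinates: for each $i \in \{1, \ldots, r\}$, the divisor $\sum_{l} u_{l,i} Z_l$ is principal on $Z$. Since principal divisors are numerically trivial, intersecting against the 1-cycle $C$ yields $\sum_{l} u_{l,i}\,(C \cdot Z_l) = 0$ in $\Z$. Packaging these $r$ equations into a single vector identity in $N$ gives
\[
\sum_{l \in L} (C \cdot Z_l)\, u_l \;=\; 0 \quad \text{in } N.
\]

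Next I would identify the nonzero terms in this sum using the orbit–cone correspondence. The support of $Z_l$ is the closure of the torus orbit associated with the ray $\rho_l = \R_{\geqslant 0} u_l$, and $C$ is the closure of the orbit associated with $\sigma_C$. The intersection $Z_l \cap C$ is nonempty precisely when $\rho_l$ is a ray of some maximal cone of $\Sigma$ containing $\sigma_C$, i.e.\ either $u_l \in \sigma_C$, in which case $C \subset Z_l$, or $u_l \in \{u_p, u_q\}$, in which case $Z_l$ meets $C$ transversally at the torus-fixed point $p$ or $q$. In the transverse cases smoothness of $Z$ at $p$ and $q$ (which follows from $\sigma_C$ being a facet of a smooth maximal cone) gives $(C \cdot Z_p) = (C \cdot Z_q) = 1$. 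Substituting these into the identity above, I obtain
\[
u_p + u_q + \sum_{u_l \in \sigma_C} (C \cdot Z_l)\, u_l \;=\; 0,
\]
which rearranges to the claimed formula.

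The main step that requires care is the intersection-multiplicity computation at $p$ and $q$: one has to confirm that the local picture at a torus-fixed point sitting in a smooth maximal cone reduces to the transverse intersection of an affine line with a coordinate hyperplane, so that the contribution is exactly $1$. Everything else is bookkeeping from the exact sequence of \cref{lemma pic toric}.
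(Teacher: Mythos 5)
The paper does not actually prove this lemma: it cites it as \cite[p.\ 99]{Fu} and immediately uses it to \emph{derive} the synthetic form \eqref{equ relation fan}, i.e.\ $\sum_{l\in L}(C\cdot Z_l)u_l=0$. Your argument runs the logic in the opposite direction and is correct: you first establish \eqref{equ relation fan} directly from \cref{lemma pic toric} (pairing the principal divisors $\sum_l u_{l,i}Z_l$ against the complete curve $C$ and using numerical triviality of principal divisors on the proper smooth $Z$), and then recover the lemma by expanding the sum and substituting $(C\cdot Z_p)=(C\cdot Z_q)=1$ and $(C\cdot Z_l)=0$ for $Z_l$ disjoint from $C$. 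The orbit--cone analysis of which $Z_l$ meet $C$ is right (the cones containing $\sigma_C$ are $\sigma_C$ itself and the two adjacent maximal cones, so $Z_l\cap C\neq\varnothing$ only for $u_l\in\sigma_C$ or $u_l\in\{u_p,u_q\}$), and transversality/smoothness gives the multiplicity $1$ at the fixed points.

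The main difference in flavor is that Fulton's original argument (as cited) is a local, wall-crossing computation in the fan, while yours leverages the global exact sequence
\[
0 \to M \to \Div^{\T}(Z) \to \Pic(Z) \to 0
\]
and numerical triviality. Your route is perhaps cleaner given that \cref{lemma pic toric} is already in hand, but note that it depends on $C$ being complete (so $Z$ proper), whereas the paper remarks that the lemma itself holds even for non-proper $Z$; so your proof is an alternative in the proper case rather than a full replacement of the cited reference. One cosmetic point: where you write that $Z_l\cap C\neq\varnothing$ ``precisely when $\rho_l$ is a ray of some maximal cone containing $\sigma_C$,'' the correct general criterion is that $\rho_l$ and $\sigma_C$ lie in a common cone (not necessarily maximal); since $\sigma_C$ is a facet the two criteria coincide here, but the phrasing as stated is not the general orbit--cone statement.
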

\noindent Observing that we have $(C \cdot Z_p)= (C \cdot Z_q) =1$, and $(C \cdot Z_l)=0$ for any other $l$, this may be rewritten in a more synthetic way:
\begin{equation} \label{equ relation fan}\sum_{l \in L} (C \cdot Z_l) u_l =0.
\end{equation}

Note that this lemma holds even if $Z$ is not proper.
\\Finally, we have the following vanishing theorem for nef divisors on a proper toric variety.
%, and a standard description of the Picard group of $Z$ in terms of the fan $\Sigma$.

\begin{prop} \label{lemma global generated}
Let $D$ be a nef Cartier divisor on a proper toric variety $Z$. Then $H^i(Z, \gO_Z(D))=0$ for $i>0$.
\end{prop}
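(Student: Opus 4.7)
My plan is to prove this via the classical Demazure vanishing theorem approach, using the torus action to put a grading on cohomology and then exploiting convexity of the support function associated with a nef divisor.

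First, I would reduce to the case where $D$ is $T$-invariant. Since every Cartier divisor on a toric variety is linearly equivalent to a $T$-invariant one (by the exact sequence recalled above), and cohomology of $\gO_Z(D)$ depends only on the linear equivalence class, we may assume $D = \sum_{l \in L} a_l Z_l$ for integers $a_l$. The line bundle $\gO_Z(D)$ is then $T$-equivariant and its global sections decompose as a direct sum over the character lattice $M$ of the torus, and more generally $H^i(Z, \gO_Z(D)) = \bigoplus_{m \in M} H^i(Z, \gO_Z(D))_m$ for every $i$. To $D$ we associate the piecewise linear function $\psi_D: |\Sigma| \to \R$, characterized by $\psi_D(u_l) = -a_l$ on the primitive generators; the nef hypothesis is equivalent to saying that $\psi_D$ is upper-convex (i.e.~concave) on the support of the fan.

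Next, I would compute the graded pieces using \v{C}ech cohomology with respect to the affine toric open cover $\{U_\sigma\}_{\sigma \in \Sigma_{\max}}$. A standard calculation (see for instance Cox--Little--Schenck) identifies each graded piece $H^i(Z, \gO_Z(D))_m$ with the reduced cohomology $\widetilde{H}^{i-1}(V_{D,m}; k)$ of the subset
\[
V_{D,m} = \{ v \in |\Sigma| \, : \, \langle m, v \rangle < \psi_D(v) \} \subset N_{\R}.
\]
Thus it suffices to show that, under the nef assumption, the set $V_{D,m}$ is either empty or contractible for every $m \in M$.

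This last point is where convexity enters. Since $\psi_D$ is upper-convex and $\langle m, \cdot \rangle$ is linear, the function $\psi_D - \langle m, \cdot \rangle$ is upper-convex on $|\Sigma| = N_{\R}$ (properness of $Z$ ensures the fan is complete), hence its strict superlevel set $V_{D,m} = \{\psi_D - \langle m, \cdot \rangle > 0\}$ is convex, and in particular contractible when nonempty. Therefore $\widetilde{H}^{i-1}(V_{D,m}) = 0$ for all $i \geq 1$ and all $m \in M$, giving $H^i(Z, \gO_Z(D)) = 0$ for $i > 0$. The main technical point is really the \v{C}ech cohomology identification with $\widetilde{H}^*(V_{D,m})$; once that is in place the nef-equals-convex translation finishes the argument immediately. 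Alternatively, one may invoke Demazure vanishing directly, which is a standard result in toric geometry.
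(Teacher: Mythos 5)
Your proof is correct, and it is a self-contained version of the argument the paper handles by citation. The paper's proof is two references: \cite[Theorem 3.1]{Mu} to pass from nef to globally generated, and then \cite[p.~74]{Fu} for the vanishing of higher cohomology of globally generated line bundles on complete toric varieties. Fulton's proof at that point is precisely the Demazure-vanishing argument you describe: reduce to a $T$-invariant divisor, pass to the $M$-graded pieces of \v{C}ech cohomology over the cover by affine toric charts, identify each graded piece $H^i(Z,\gO_Z(D))_m$ with a reduced cohomology group $\widetilde H^{i-1}(V_{D,m})$, and observe that (upper-)convexity of the support function forces $V_{D,m}$ to be convex, hence acyclic. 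The only cosmetic difference is that you go directly from nef to convexity of $\psi_D$, bypassing the intermediate ``globally generated'' characterization that the paper routes through; on a complete toric variety these are equivalent statements about $\psi_D$, so nothing is gained or lost. Your approach has the advantage of making the key technical point explicit rather than hiding it in a reference; the paper's approach is shorter. Both are fine. One small caveat: the precise definition of $V_{D,m}$ varies across the literature (some sources take a convex hull of bad rays, others a union of cones, others the superlevel set you use), so if you wanted to make this a publishable proof you would need to nail down which form of the graded \v{C}ech identification you are invoking; but the convexity argument adapts without difficulty to each variant.
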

\begin{proof}
The divisor $D$ being nef is equivalent to it being globally generated, by \cite[Theorem 3.1]{Mu}. Thus, the result follows directly from \cite[p. 74]{Fu}.
\end{proof}

Following \cite{NXY}, we will say that an $R$-scheme of finite type $\cZ$ is toric if there exists a toric $k$-scheme of finite type $\mathcal{Z}$, together with a toric morphism $t: \mathcal{Z} \fl \A^1_k$, such that $\cZ \simeq \mathcal{Z} \times_{\A^1} R$. Writing $\widehat{N}$ for the lattice of 1-parameter subgroups of the torus of $\mathcal{Z}$, such a scheme is described by a fan $\widehat{\Sigma}$ in $\widehat{N}_{\R}$, together with a linear map $\ord(t) : \lvert \widehat{\Sigma} \rvert \fl \R_{\ge 0}$, defined by $\ord(t)(n) = \ord_0 (t \circ n) $ for a 1-parameter subgroup $n: \G_m \rightarrow \mathcal{Z}$. Note that the map $\ord(t)$ recovers the function $t$ uniquely, since it is a monomial.
%\\Let $X/K$ be a $n$-dimensional smooth projective variety, polarized by an ample line bundle $L$.
\begin{defn}
\label{toricdef}%Let $\X /R$ be a good dlt model of $X$, 
Let $\X$ be a normal $R$-scheme of finite type, and $Y$ be a stratum of $\X_k$.
We say that $\X$ is \emph{toric along Y} if there exists a toric $R$-scheme $\cZ$, a stratum $W$ of $\cZ_k$ and a formal isomorphism over $R$ 
$$\widehat{\X_{/Y}} \simeq \widehat{\cZ_{/W}}.$$
%$$ f : \widehat{\X_{/Y}} \xrightarrow{\simeq} \widehat{\cZ_{/W}}.$$
\end{defn}

\subsection{Berkovich spaces} \label{subsec:Berkovich geometry}
Let $X$ be a normal variety over $K$. We denote by $X^{\an}$ the \emph{Berkovich analytification} of $X$. Set-theoretically, it consists of pairs $x=(\xi_x,v_x)$ where $\xi_x \in X$ and $v_x$ is a real-valued valuation on the residue field at $\xi_x$ extending the valuation $\ord_t$ on $K$. We denote by $\mathscr{H}(x)$ the completion of the residue field at $\xi_x$ with respect to $v_x$. 
We endow $X^{\an}$ with the coarsest topology such that \begin{itemize}
\itemsep0em
    \item[-] the forgetful map $\iota: X^{\an} \rightarrow X$, which maps $x=(\xi_x,v_x)$ to $\xi_x$, is continuous;
    \item[-] for any Zariski open $U \subseteq X$ and any function $f \in \gO_X(U)$, the map: $$\lvert f \rvert : U^{\an} \coloneqq \iota^{-1}(U) \rightarrow \R,$$ which evaluates $f$ at $x$ associating the value $\lvert f \rvert (x) \coloneqq\exp(-v_x(f(\xi_x)))$, is continuous.
\end{itemize}
This makes $X^{\an}$ a Hausdorff topological space, which is compact if and only if $X$ is proper over $K$.

Assume that $X/K$ is proper, and let $\X/R$ be a proper
%dlt 
model of $X$. By the valuative criterion of properness, for any $x =(\xi_x,v_x) \in X^{\an}$ there is a unique lift of the point $\xi_x$ to the valuation ring $\mathscr{H}(x)^{\circ}$ of $\mathscr{H}(x)$:
\begin{center}
    \begin{tikzcd}
    \Spec \mathscr{H}(x) \arrow[r, "\xi_x"] \arrow[d]  & \X \arrow[d] \\
    \Spec \mathscr{H}(x)^{\circ} \arrow[ru, dashed] \arrow[r] & \Spec R.
   \end{tikzcd}
\end{center}
The image of the closed point of $\Spec \mathscr{H}(x)^{\circ}$ under the extended morphism $\Spec \mathscr{H}(x)^{\circ} \rightarrow \X$ is called the center (or specialization) of $x$ and denoted by $c_\X(x)$. The map $c_{\X} : X^{\an} \fl \X_k$ turns out to be anticontinuous, i.e. the preimage of an open subset of $\X_k$ by $c_{\X}$ is closed in $X^{\an}$.

\subsection{Skeletons} \label{subsec:skeletons}
Let $X$ be a smooth proper variety over $K$. To every dlt model $\X$ of $X$, with special fiber $\X_k = \sum_{i \in I} a_i D_i$, we can associate a cell complex encoding the combinatorics of the intersections of the components $D_i$, whose faces are in one-to-one correspondence with strata of $\X_k$. 
%The precise definition is as follows:
\begin{defn}
We call \emph{simplex} a topological space, endowed with a $\Z$-affine structure, which is $\Z$-affine isomorphic to a space of the form:
$$\tau= \{ \sum_{i=0}^m a_i w_i =1 \} \subset \R^{m+1}, \quad \textrm{ for some $a_i \in \N_{\geqslant 0}$}.$$
\end{defn}

\begin{defn} \label{defn:star}
Let $\X$ be a dlt model of $X$. 
To each stratum $Y$ of $\X_k$ which is a connected component of $D_J$, we associate a simplex:
$$ \tau_Y = \{ w \in \R_{\geqslant 0}^{|J|} \, | \sum_{j \in J} a_j w_j =1 \}.$$
We define the cell complex $\mathcal{D}(\X_k)$ by the following incidence relations: $\tau_Y$ is a face of $\tau_{Y'}$ if and only if $Y' \subset Y$.
\end{defn}

Given any dlt model $\X$ of $X$ over $R$, there is a natural embedding $i_{\X}$ of the dual complex $\mathcal{D}(\X_k)$ into $X^{\text{an}}$, given as follows. 
The vertices $v_i$ of $\mathcal{D}(\X_k)$ are in one-to-one correspondence with irreducible components $D_i$ of the special fiber $\X_k = \sum_{i \in I} a_i D_i$, so that we set $$i_{\X}(v_{i}) = v_{D_i} \coloneqq a^{-1}_i \ord_{D_i},$$ where the valuation $\ord_{D_i}$ associates to a meromorphic function $f \in K(X) \simeq K(\X)$ its vanishing order along $D_i$ - the normalisation by $a^{-1}_i$ ensuring that $v_{D_i}(t) = 1$. A valuation given in this way, for some dlt model $\X$ of $X$, is called \emph{divisorial}.
One can now somehow interpolate between those divisorial valuations using \emph{quasi-monomial} valuations, in order to embed $\mathcal{D}(\X_k)$ into $X^{\text{an}}$:

\begin{prop}[{\cite[Proposition 2.4.4]{MN}}]
Let $\X$ be a dlt model of $\X$, with special fiber $\X_k = \sum_{i \in I} a_i D_i$.
Let $J \subset I$ such that $D_J = \cap_{j \in J} D_j$ is non-empty, and $Y$ a connected component of $D_J$, with generic point $\eta$. 
We furthermore fix a local equation $z_j \in \gO_{\X,\eta}$ for $D_j$, for any $j \in J$.
\\Then, for any $w \in \tau_Y = \{ w \in \R^{|J|}_{\geqslant 0} \,| \sum_{j \in J} a_j w_j =1 \}$, there exists a unique valuation
$$v_w : \gO_{\X,\eta} \fl \R_{\geqslant 0} \cup \{ + \infty\}$$
 such that for every $f \in \gO_{\X,\eta}$, with expansion $f = \sum_{\beta \in \N^{|J|}} c_{\beta} z^{\beta}$ (with $c_{\beta}$ either zero or unit), we have:
$$v_w(f) = \min \{ (w \cdot \beta) \,| \beta \in \N^{|J|},  c_{\beta} \neq 0 \},$$
where $( \; \cdot \; )$ is the usual scalar product on $\R^{|J|}$.
\end{prop}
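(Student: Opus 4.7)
The plan is to construct $v_w$ explicitly on the completion of $\gO_{\X,\eta}$ and then restrict it. Since $(\X, \X_{k,\mathrm{red}})$ is dlt and $Y$ is a log canonical center (being a stratum of $\X_k$), the model $\X$ is snc at $\eta$. Consequently, $\gO_{\X,\eta}$ is a regular local ring of dimension $|J|$ whose maximal ideal is generated by $(z_j)_{j \in J}$. By Cohen's structure theorem in equal characteristic zero, the $\mathfrak{m}_\eta$-adic completion $\widehat{\gO_{\X,\eta}}$ is isomorphic to the power series ring $A = \kappa(\eta)[[z_j : j \in J]]$, and the expansion $f = \sum_\beta c_\beta z^\beta$ in the statement is the image of $f$ under the inclusion $\gO_{\X,\eta} \hookrightarrow A$ (which is faithfully flat, hence injective).

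Next I would define $v_w : A \to \R_{\geqslant 0} \cup \{+\infty\}$ by the formula in the statement, with $v_w(0) = +\infty$, and check it is a valuation. Well-definedness of the minimum is immediate when every $w_j > 0$, since each sublevel set $\{w \cdot \beta \leqslant C\}$ is then finite; when some coordinates of $w$ vanish, I would split $J = J_+ \sqcup J_0$ according to the sign of $w_j$, view $A$ as a power series ring over $\kappa(\eta)[[z_j : j \in J_0]]$, and reduce to the strictly positive case. The ultrametric inequality $v_w(f+g) \geqslant \min(v_w(f), v_w(g))$ is then immediate from the formula. Multiplicativity $v_w(fg) = v_w(f) + v_w(g)$ is the key point: one considers the \emph{initial forms} $\mathrm{in}_w(f), \mathrm{in}_w(g)$ consisting of the (finite, after grouping monomials of equal $w$-weight) partial sums realizing the minimum, and shows that $\mathrm{in}_w(fg) = \mathrm{in}_w(f)\cdot \mathrm{in}_w(g)$ is nonzero of weight $v_w(f)+v_w(g)$. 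This reduces to the fact that the $w$-associated graded ring $\mathrm{gr}_w(A)$ is isomorphic to a polynomial algebra over $\kappa(\eta)$ (possibly tensored with a power series ring over the variables with zero weight), and hence a domain. The normalization $\sum_j a_j w_j = 1$ ensures $v_w(t) = 1$, so that $v_w$ extends $\mathrm{ord}_t$ on $R$.

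For uniqueness, suppose $v$ is any valuation on $\gO_{\X,\eta}$ satisfying the formula of the proposition. Then $v(z_j) = w_j$ for every $j \in J$, and $v$ vanishes on units of $\gO_{\X,\eta}$. By multiplicativity, $v$ agrees with $v_w$ on every monomial $c \cdot z^\beta$, and by the ultrametric inequality it agrees on every polynomial expression in the $z_j$. Since any $f \in \gO_{\X,\eta}$ admits, modulo $\mathfrak{m}_\eta^N$ for every $N$, such a polynomial representative, and since $v_w$ is continuous for the $\mathfrak{m}_\eta$-adic topology, the value $v(f) = v_w(f)$ is forced.

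The main obstacle is the verification of multiplicativity in the second step: while the argument via initial forms is standard for genuine monomial valuations on a polynomial ring with positive weights, extending it to power series and allowing zero weights requires checking that the associated graded object really is a domain and that no cancellation in the infinite expansions can lower the weight of a product. The cleanest way is to work weight by weight, noting that for each value $\alpha$ in the image of $w \cdot (-)$, the $\kappa(\eta)$-vector space spanned by monomials of weight $\alpha$ is finite dimensional in the $J_+$ direction, so the expression for the coefficient of weight $v_w(f)+v_w(g)$ in $fg$ reduces to a finite bilinear computation in $\mathrm{gr}_w(A)$, which is a domain.
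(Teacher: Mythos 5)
The paper does not prove this statement---it is cited verbatim from \cite[Proposition 2.4.4]{MN}---so there is no internal proof to compare against, and I will assess your argument on its own terms.

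Your existence argument is sound. The reduction to the completion via Cohen's theorem is the right move, and the multiplicativity argument via initial forms in the $w$-graded ring is standard and correct: $\mathrm{gr}_w(A) \cong \kappa(\eta)[[z_j : j \in J_0]][z_j : j \in J_+]$ (graded by $\deg z_j = w_j$) is indeed a domain, each graded piece is a free $\kappa(\eta)[[z_j : j \in J_0]]$-module of finite rank, and $\mathrm{in}_w(f)\cdot\mathrm{in}_w(g)\neq 0$ forces $v_w(fg)=v_w(f)+v_w(g)$. The well-definedness of the minimum when some $w_j$ are irrational is also handled correctly by the finiteness of sublevel sets $\{w\cdot\beta\leqslant C\}$ in the $J_+$-coordinates. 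One point worth being explicit about, which you gloss over, is that the claim ``$Y$ is a log canonical center'' and hence ``$\X$ is snc at $\eta$'' is not automatic from the dlt definition alone; it uses \cite[4.16]{Kollar2013} in the case over a curve, or the approximation argument of \cite[Corollary 4.4]{NXY} in general, as the paper itself recalls in \S 1.1.

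Your uniqueness paragraph, however, is both unnecessary and flawed. It is unnecessary because once the expansion $f = \sum c_\beta z^\beta$ in $\widehat{\gO_{\X,\eta}}$ is fixed (as it is, given a regular system of parameters and a coefficient field), the displayed formula is a definition of $v_w(f)$, so any valuation satisfying it equals $v_w$ by fiat. It is flawed because the argument you give instead---that $v(z_j)=w_j$, $v(\mathrm{unit})=0$, multiplicativity, the ultrametric inequality, and $\mathfrak{m}_\eta$-adic continuity together pin down $v$---does not work: the ultrametric inequality gives only $v(\sum c_\beta z^\beta)\geqslant\min(\ldots)$, so knowing $v$ on monomials does not determine $v$ on sums without the formula you are trying to re-derive; and the $\mathfrak{m}_\eta$-adic continuity of $v_w$ fails whenever some $w_j=0$ (e.g.\ $v_w(z_j^n)=0$ does not tend to infinity), which is precisely the case of points $w$ on a proper face of $\tau_Y$ and is squarely within the scope of the proposition. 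You should simply drop that paragraph and state that uniqueness is immediate from the formula.
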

The above valuation is called the quasi-monomial valuation associated with the data $(Y, w)$. Then
\begin{align*}
    i_\X: & \quad \mathcal{D}(\X_k) \rightarrow X^{\text{an}} \\
    & \quad \tau_Y \ni w \mapsto v_w
\end{align*}
%We now define $i_{\X}$ on $\tau_Y$ by the formula $i_{\X}(w) = v_w$, which 
gives a well-defined continuous injective map from $\mathcal{D}(\X_k)$ to $X^{\text{an}}$. 
\begin{defn}
We call the image of $\mathcal{D}(\X_k)$ by $i_{\X}$ the \emph{skeleton} of $\X$, written as $\Sk(\X) \subset X^{\emph{an}}$. It is a cell complex of dimension at most $\dim X$.
\end{defn}
\noindent By compactness of $\mathcal{D}(\X_k)$, $i_{\X}$ induces a homeomorphism between $\mathcal{D}(\X_k)$ and $\Sk(\X)$, so that we will sometimes abusively identify $\mathcal{D}(\X_k)$ with $\Sk(\X)$.
\begin{defn}
Let $Y$ be a stratum of $\X_k$. We define $\Star(\tau_Y)$ as the union of open faces in $\Sk(\X)$ whose closure contains $\tau_Y$.
\end{defn}

\subsection{Berkovich retractions} \label{subsec:Berkovich retractions}
Let $\X$ be a good dlt model of a smooth proper $K$-variety $X$. We can now define a retraction for the inclusion $\Sk(\X) \subset X^{\text{an}}$ as follows: for any  $v \in X^{\an}$, there exists a minimal stratum $Y \subseteq  \cap_{j \in J} D_j$ of $\X_k$ such that the center $c_{\X}(v)$ of $v$ is contained in $Y$. We then associate to $v$ the quasi-monomial valuation $\rho_{\X}(v)$ corresponding to the data $(Y, w)$ with $w_j = \frac{1}{q}v(z_j)$, where $z_j$ is a local equation of $q D_j$ at $c_{\X}(v)$, for some $q \in \N_{>0}$. This should be seen as a monomial approximation of the valuation $v$, with respect to the model $\X$.
\begin{defn}
The above map $\rho_{\X} : X^{\an} \longrightarrow \Sk(\X)$ is the Berkovich retraction associated with the model $\X/R$.
\end{defn}
\noindent The Berkovich retraction is continuous, restricts to the identity on $\Sk(\X)$, and by \cite{Th,Be} $\rho_{\X}$ is a strong deformation retraction, i.e. there is a homotopy between $\rho_\X$ and the identity on $X^{\an}$ that fixes the points of $\Sk(\X)$. It follows that $X^{\an}$ and $\Sk(\X)$ are homotopy equivalent.
%admits a homotopy inverse, so that there is a homotopy equivalence between $X^{\an}$ and $\Sk(\X)$.

Let $Y$ be a stratum of $\X_k$. The formal scheme $\widehat{\X_{/Y}}$ admits a generic fiber $\mathfrak{X}_Y$ in the sense of Berkovich, which is a Berkovich space and can be explicitly described as the open subset of $X^{\an}$:
$$\mathfrak{X}_Y = \{ x \in X^{\an} \lvert \, c_{\X}(v_x) \in Y \}.$$
It furthermore coincides with $\rho^{-1}_{\X}(\Star(\tau_Y)) \subset X^{\an}$. This Berkovich space comes with a retraction: $$\rho_Y : \mathfrak{X}_Y \fl \Star(\tau_Y), $$
which coincides with the restriction of the retraction $\rho_{\X}$. Thus, the restriction of $\rho_{\X}$ over $\Star(\tau_Y)$ only depends on the formal completion $\widehat{\X_{/Y}}$.
\vspace{10pt}

An explicit example of Berkovich retractions, which we will use as a local model in the rest of the paper, is as follows. Let $\T = \G^n_{m,K}$ be a torus, with character lattice $M$ and cocharacter lattice $N$. We view the elements $m$ of $M$ as rational functions on $\T$, so that its analytification $\T^{\text{an}}$ comes with a continuous map:
\begin{align*}
\val: & \, \T^{\text{an}} \fl N_{\R}, \\
& v_x \longmapsto (m \mapsto v_x(m)),
\end{align*}
under the identification $N_{\R} = \Hom(M, \R)$.
The notation $\val$ can be understood as follows: fix an isomorphism $N \simeq \Z^n$, so that $\T = \Spec K[M] \simeq \Spec K[X_1^{\pm},\ldots,X_n^{\pm}]$, and $v_x(m) = v_x(X^m)= \sum_{i=1}^n m_i v_x(X_i)$, so that the map $\val$ reads:
\begin{align*}
\val: & \, \T^{\text{an}} \fl \R^n, \\
& v_x \longmapsto (v_x(X_i))_{i=1,\ldots,n}.
\end{align*}
Since $v_x(X_i) = - \log \lvert X_i(x) \rvert$, this is the non-archimedean analog of the map $(\C^*)^n \fl \R^n$ sending $(z_1,\ldots,z_n)$ to $-(\log \lvert z_1 \rvert,\ldots,\log \lvert z_n \rvert)$.

\noindent The map $\val$ admits a continuous section $ \zeta : N_{\R} \fl \T^{\text{an}}$, sending a point $n \in N_{\R}$ to the Gauss point of the affinoid torus $\val^{-1}(n)$.
More explicitly, for $x \in \T^{\an}$, the valuation $\zeta( \val(x))$ is the valuation on the function field of $\T$ defined by the following formula:
$$\zeta(\val(x)) \big( \sum_{m \in M} \alpha_m z^m \big) = \min_{\alpha_m \neq 0} \big( \ord_t(\alpha_m) +v_x(z^m) \big).$$

Now let $\X /R$ be a regular toric model of $\T$, i.e. a regular toric $R$-scheme such that $\X \times_R \; \Spec K = \T$, which we assume to have reduced special fiber. Such a model is described by a regular fan $\hat{\Sigma} \subset \hat{N}_{\R} = N_{\R} \times \R_{\ge 0}$, whose cones intersect $N_{\R} \times \{0 \}$ only at the origin.
\\We consider the following closed subset of $\T^{\an}$:
$$\widehat{\X}_{\eta} := \{ v_x \in \T^{\an} \lvert \, v_x \; \text{has a center on} \; \X \},$$
which admits a Berkovich retraction:
$$\rho_{\X} : \widehat{\X}_{\eta} \fl \Sk(\X)$$
defined as above. In this case, the map $\rho_{\X}$ can be described explicitly as follows: let $\Sigma_1$ be the polyhedral complex obtained by intersecting the fan $\hat{\Sigma}$ with $N_{\R} \times \{1\}$. There is a natural identification between $\Sigma_1$ and $\mathcal{D}(\X_k)$, sending a vertex of $\cD(\X_k)$ to the primitive generator of the corresponding ray of $\hat{\Sigma}$, and then extending on each face by linearity. Moreover, it follows from \cite[Theorem A.4]{GJKM} that $\zeta ( \lvert \Sigma_1 \rvert) = \Sk(\X) \subset \T^{\an}$.
\begin{prop}[{\cite[Example 3.5]{NXY}}] \label{prop: toric retraction}
The equality:
$$\widehat{\X}_{\eta} = \val^{-1} (\lvert \Sigma_1 \rvert) $$
holds, and $\rho_{\X} = \val_{| \widehat{\X}_{\eta}}$.
\end{prop}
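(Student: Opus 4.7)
The plan is to work affine-chart by affine-chart on the toric model $\X$, using the valuative criterion to detect which valuations in $\T^{\an}$ admit a center on $\X$, and then to check that in toric coordinates the quasi-monomial approximation defining $\rho_\X$ agrees tautologically with the Gauss point section $\zeta \circ \val$.

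First I would establish the equality $\widehat{\X}_\eta = \val^{-1}(|\Sigma_1|)$. A valuation $v_x \in \T^{\an}$ extends (uniquely) to a valuation on the function field of $\X$, and admits a center on $\X$ iff it admits one on some affine toric chart $U_{\hat{\sigma}} = \Spec R[\hat{\sigma}^\vee \cap \widehat{M}]$ corresponding to a cone $\hat{\sigma} \in \hat{\Sigma}$, where $\widehat{M} = M \oplus \Z$ is the character lattice of the ambient torus $\widehat{\T} = \T \times_k \G_m$. This amounts to the condition $v_x(\hat{m}) \geq 0$ for every $\hat{m} \in \hat{\sigma}^\vee \cap \widehat{M}$. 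Since $v_x$ restricted to the character $t$ of the second factor equals $1$, this condition on the pair $(\val(v_x), 1) \in N_\R \times \R$ is exactly that $(\val(v_x),1) \in \hat{\sigma}$, i.e.\ that $\val(v_x) \in \hat{\sigma} \cap (N_\R \times \{1\})$. Taking the union over all $\hat{\sigma} \in \hat{\Sigma}$ gives $\widehat{\X}_\eta = \val^{-1}(|\Sigma_1|)$.

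Second, I would identify $\rho_{\X}$ with $\val|_{\widehat{\X}_\eta}$, or equivalently, since $\Sk(\X) = \zeta(|\Sigma_1|)$ by the cited result in \cite{GJKM}, show that $\rho_\X(v_x) = \zeta(\val(v_x))$. Fix $v_x \in \widehat{\X}_\eta$ and let $\hat{\sigma}$ be the unique minimal cone of $\hat{\Sigma}$ whose relative interior contains $(\val(v_x),1)$; the corresponding stratum $Y$ of $\X_k$ is then the minimal stratum containing $c_\X(v_x)$. Regularity of $\hat{\Sigma}$ means that the primitive generators $u_1,\dots,u_r$ of the rays of $\hat{\sigma}$ form part of a basis of $\widehat{N}$, and the dual basis provides monomial local equations $z_1,\dots,z_r$ at the generic point of $Y$, each $z_j$ cutting out the toric divisor $D_{u_j}$ with multiplicity one (since $\X_k$ is reduced). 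By definition of the retraction, $\rho_\X(v_x)$ is the quasi-monomial valuation on $\mathscr{O}_{\X,\eta_Y}$ associated with the weights $w_j = v_x(z_j)$, which by construction are precisely the coordinates of $\val(v_x)$ in the basis $(u_1,\dots,u_r)$. Expanding an element of $\mathscr{O}_{\X,\eta_Y}$ as a series $\sum c_\beta z^\beta$ and comparing with the explicit formula for $\zeta(\val(v_x))$ applied to the monomials $z^\beta$ (which are themselves characters of $\widehat{\T}$), both yield the minimum of $w \cdot \beta + \ord_t(c_\beta)$, so the two valuations coincide.

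The main technical point is bookkeeping between the two lattices $N$ and $\widehat{N} = N \oplus \Z$, i.e.\ making sure that the convention $v_x(t) = 1$ is correctly translated into the slice-at-height-one prescription defining $\Sigma_1$, and that the toric monomial local equations for the strata of $\X_k$ near $Y$ are correctly identified with the dual basis of the rays of $\hat{\sigma}$. Once these identifications are in place, both assertions reduce to the standard fact that in a regular toric chart, the quasi-monomial approximation of a valuation with respect to coordinates cut out by rays of the fan is identical to its tropicalization lifted via the Gauss-point section.
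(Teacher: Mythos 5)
Your proof is correct and takes a somewhat more direct route than the paper for the first equality. The paper first establishes Lemma~\ref{lem:toric retraction} in order to reduce the existence of a center to the case of the torus-invariant valuation $\zeta(\val(x))$, and then checks the invariant case. You instead observe that the center criterion on a toric chart $U_{\hat\sigma}=\Spec R[\hat\sigma^\vee\cap \hat M]$ depends only on the values of $v_x$ on characters, and since $v_x(z^{\hat m}) = \langle(\val(v_x),1),\hat m\rangle$ tautologically for every $\hat m\in \hat M$, the criterion becomes $(\val(v_x),1)\in\hat\sigma$. This bypasses the lemma for that part and is arguably cleaner; what it buys is that the distinction between general $v_x$ and its Gauss-point approximation $\zeta(\val(v_x))$ is simply invisible on characters, which is all that the center criterion sees.

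For the second equality your argument is in the same spirit as the paper's, but one step is asserted without justification: that the stratum $Y$ corresponding to the minimal cone $\hat\sigma$ with $(\val(v_x),1)$ in its relative interior is the \emph{minimal} stratum of $\X_k$ containing $c_\X(v_x)$. This is exactly what Lemma~\ref{lem:toric retraction} supplies in the paper: one must know that the center lies in the open orbit $O_{\hat\sigma}$, not merely somewhere in the chart $U_{\hat\sigma}$ (which contains the orbits of all faces of $\hat\sigma$). The fact does follow from the computation you already set up: for $\hat m\in\hat\sigma^\vee\cap\hat M$ one has $v_x(z^{\hat m})>0$ if and only if $\hat m\notin\hat\sigma^\perp$, because $(\val(v_x),1)$ is in the relative interior of $\hat\sigma$, and this is precisely the condition for the center to lie in $O_{\hat\sigma}$. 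You should spell this out, since without it $\rho_\X(v_x)$ and $\zeta(\val(v_x))$ are not a priori quasi-monomial valuations based at the same generic point $\eta_Y$, and the concluding comparison of their values on the monomial local equations $z_j$ would not yet make sense. (A minor related imprecision: the weights $w_j$ are the coordinates of $(\val(v_x),1)\in\hat N_\R$ in the partial basis $(u_1,\ldots,u_r)$, not of $\val(v_x)\in N_\R$; and the common formula at the end should be the quasi-monomial one $\min\{w\cdot\beta : c_\beta\neq 0\}$, the $t$-contribution being already absorbed into $z^\beta$ since the $z_j$ are characters of $\widehat\T$.)
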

\begin{proof}
We start by proving the first equality. Let $x \in \T^{\an}$, we know from \cref{lem:toric retraction} below that $x$ has a center on $\X$ if and only if $\zeta(\val(x))$ has a center on $\X$. Thus, it is enough to prove that for $n \in N_{\R}$ and $y = \zeta(n)$, $y$ has a center on $\X$ if and only if $n \in \lvert \Sigma_1 \rvert$.
\\By \cite[Lemma  A.1]{GJKM}, if $y \in \zeta(N_{\R})$ has a center on $\X$, it must be the closure of a torus orbit $Y \subset \X_k$. By \cite[Theorem 6]{KK}, there exists a cone $\sigma \in \hat{\Sigma}$ such that the generic point of $Y$ is contained in the associated toric affine chart $\X_{\sigma} = \Spec R[\check{\sigma} \cap \hat{M}]$. In particular, for any monomial $z^m$ that is regular on $\X_{\sigma}$, we have $v_y(z^m) \ge 0$. In other words, writing $y = \zeta(n)$, we have $\langle n, m \rangle \ge 0$ for all $m \in \check{\sigma}$, so that $n \in \sigma$. Since $v_y(t) =1$, $y \in \zeta(\lvert \Sigma_1 \rvert)$.
\\By the same argument, if $n \in \lvert \Sigma_1 \rvert$, there exists a cone $\sigma$ such that $n \in \sigma$, which means that $v_{\zeta(n)}$ has positive value on each monomial $m \in \check{\sigma}$, and thus has a center on $\X_{\sigma}$ and in particular on $\X$.

To prove the second equality, since $\rho_{\X}$ is the identity on $\zeta(\lvert \Sigma_1 \rvert) = \Sk(\X)$, we merely have to prove that $\rho_{\X} = \rho_{\X} \circ \val$. However this follows directly from the definition of $\rho_{\X}$, and the fact that $c_{{\X}}(x) \in \overline{c_{{\X}}( \zeta (\val(x)))}$ for $x \in \widehat{\X}_{\eta}$ by \cref{lem:toric retraction}. Indeed, $\rho_{\X}(x)$ only depends on the values $v_x(z)$, where $z$ is a local equation for a component of $\X_k$ at $c_{\X}(x)$. Since $\X$ is a toric model, these local equations can be taken to be monomials, so that the result follows from the fact that $x$ and $\zeta(\val(x))$ take the same values on monomials.
\end{proof}

\begin{lem} \label{lem:toric retraction}
Let $x \in \T^{\an}$. Then $x$ has a center on $\X$ if and only if $\zeta(\val(x))$ has a center on $\X$. Moreover, if this holds, we have $c_{{\X}}(x) \in \overline{c_{{\X}}( \zeta (\val(x)))}$.
\end{lem}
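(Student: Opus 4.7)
The strategy is to reduce to a chart-by-chart check on the toric affine opens $\X_\sigma = \Spec R[\check\sigma \cap \hat M]$, $\sigma \in \hat\Sigma$, which cover $\X$. A valuation admits a center on $\X$ iff it admits one on some $\X_\sigma$, and for a valuation $v$ on $K(\T)$ extending $\ord_t$, having a center on $\X_\sigma$ is equivalent to $v \ge 0$ on $R[\check\sigma \cap \hat M]$. Both halves of the lemma will then follow from a direct comparison between $v_x$ and $\zeta(\val(x))$ on each such coordinate ring.

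The central observation, read off directly from the formula defining $\zeta$, is that $v_x$ and $\zeta(\val(x))$ coincide on every monomial $\alpha t^a z^m$, since the minimum in the defining formula degenerates to a single term. Combined with the ultrametric inequality, this yields $v_x(f) \ge \zeta(\val(x))(f)$ for all $f \in R[\check\sigma \cap \hat M]$, with equality on monomials. Since $R[\check\sigma \cap \hat M]$ is generated as an $R$-algebra by its monomials and both valuations are non-negative on $R$ (they extend $\ord_t$), the condition ``$v \ge 0$ on $R[\check\sigma \cap \hat M]$'' reduces, via the ultrametric inequality, to ``$v \ge 0$ on the generating monomials'', which is the same condition for $v_x$ and for $\zeta(\val(x))$. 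This yields the first equivalence.

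For the closure statement, once both valuations have centers on some $\X_\sigma$, these centers are the scheme-theoretic points of $\X_\sigma$ cut out by the prime ideals $\mathfrak{p}_x = \{f \in R[\check\sigma \cap \hat M] : v_x(f) > 0\}$ and $\mathfrak{q}_x = \{f : \zeta(\val(x))(f) > 0\}$. The pointwise inequality $v_x \ge \zeta(\val(x))$ gives at once $\mathfrak{q}_x \subseteq \mathfrak{p}_x$, so $c_{\X}(x)$ is a specialization of $c_{\X}(\zeta(\val(x)))$ in $\X_\sigma$, hence $c_{\X}(x) \in \overline{c_{\X}(\zeta(\val(x)))}$ in $\X$. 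I do not anticipate any genuine obstacle: both parts rest on the elementary observation that the Gauss-averaging $\zeta \circ \val$ fixes monomials and dominates $v_x$ from below by the ultrametric inequality, combined with the monomial generation of toric affine charts. The only point requiring some care is the interaction between the chart reduction and the convention $\hat\Sigma \subset N_\R \times \R_{\ge 0}$ which controls precisely which monomials of $K(\T)$ are regular on a given $\X_\sigma$.
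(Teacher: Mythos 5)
Your proof is correct, and it takes a somewhat different route from the paper's. The paper passes to a toric compactification $\bar{\X}$ so that every point of $\T^{\an}$ has a center, identifies $c_{\bar{\X}}(\zeta(\val(x)))$ as the generic point of the minimal closed torus orbit $Z \subset \bar{\X}$ containing $c_{\bar{\X}}(x)$, and then derives both claims from torus-invariance of the open subset $\X \subset \bar{\X}$. You instead argue chart by chart on $\X$ itself, from two observations: $\zeta(\val(x))$ and $v_x$ agree on every monomial of $R[\check{\sigma} \cap \hat{M}]$ (the defining minimum for $\zeta$ has a single term, and both send $t$ to $1$), and $v_x \geq \zeta(\val(x))$ in general by the ultrametric inequality. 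The first observation shows that $v_x$ is centered on $\X_\sigma$ if and only if $\zeta(\val(x))$ is, for each $\sigma$ separately — in particular, when the centers exist on $\X$ they automatically lie in a common chart, since $c_{\X}(x) \in \X_\sigma$ if and only if $x$ is centered on $\X_\sigma$, and the condition is the same for both valuations. The second observation then gives the prime ideal inclusion $\mathfrak{q}_x \subseteq \mathfrak{p}_x$ in $R[\check{\sigma} \cap \hat{M}]$, hence the specialization $c_{\X}(x) \in \overline{c_{\X}(\zeta(\val(x)))}$. Your version is a bit more elementary: it avoids the compactification and the orbit bookkeeping entirely, replacing the paper's geometric picture of centers inside torus orbits with a purely algebraic comparison of prime ideals in a monomial ring. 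Both proofs ultimately rest on the same fact — that centers on toric affine charts are governed by values on monomials alone — but yours exposes it more directly.
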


\begin{proof}
Let $\X \subset \bar{\X}$ be a toric compactification of $\X$, i.e. a proper toric $R$-scheme containing $\X$ as a torus-invariant open subset. By the valuative criterion of properness, any valuation on $\T^{\an}$ has a center on $\bar{\X}$. We write $c_{\bar{\X}}(x)$ for the center of $x \in \T^{\an}$. 
%\color{blue} Let $Z$ be the closure of the minimal torus orbit  in $\bar{\X}$ containing $c_{\bar{\X}}(x)$. We start by proving that $c_{\bar{\X}}( \zeta (\val(x))$ is the generic point of $Z$. \color{black} 
We start by proving that $c_{\bar{\X}}( \zeta (\val(x))$ is the generic point of the closure $Z$ of the torus orbit $O(\sigma)$ in $\bar{\X}$ containing $c_{\bar{\X}}(x)$ (in particular, $c_{\bar{\X}}(x)$ must be contained in the toric interior of $Z$).
We may work on the toric affine chart $\bar{\X}_{\sigma} = \Spec R[\check{\sigma} \cap \hat{M}]$ associated with $Z$. 
Since the valuation $\zeta(\val(x))$ is monomial, it is enough to prove that $\zeta(\val(x))(z^m) = v_x(z^m) \ge 0$ for $m \in \check{\sigma} \cap M$ and that $\zeta(\val(x))(z)>0$ for $z$ a local equation of any torus invariant divisor containing $Z$, to have that $c_{\bar{\X}}( \zeta (\val(x)))$ lies in $Z$. Since $z^m$ is regular on $\bar{\X}_{\sigma}$, the first condition holds; the local equation $z$ is monomial and $\zeta(\val(x))(z) = v_x(z) > 0$ since $Z$ contains $c_{\bar{\X}}(x)$. 

Now assume that $v_x$ is centered on $\X$, i.e. $c_{\bar{\X}}(x) \in \X$. Since $c_{\bar{\X}}(x)$ is contained in the interior of the toric stratum $Z$  and lies in $\X$, so does the generic point of $Z$, i.e.  $c_{\bar{\X}}(\zeta(\val(x))) \in \X$. This implies that $\zeta(\val(x))$ has a center on $\X$.
Conversely, if $\zeta(\val(x))$ has a center $Z$ on $\X$, then $Z \subset \X$ and $c_{\bar{\X}}(x) \in Z$ as mentioned above; thus, $c_{\bar{\X}}(x) \in \X$, which concludes the proof.
\end{proof}

\subsection{Affinoid torus fibrations and integral affine structures}
\label{subsec:defn affine structure}

Let $X$ be a smooth proper variety over $K$.
\begin{defn} \label{defn:affinoid torus fibration}
%Let $\cZ$ be a dlt model of $X$, and 
Let $\rho : X^{\text{an}} \fl B$ be a continuous map to a topological space $B$. For any point $b \in B$, we say that $\rho$ is an \emph{affinoid torus fibration at $b$} if there exists an open neighbourhood $U$ of $b$ in $B$, such that the restriction to $\rho^{-1}(U)$ fits into a commutative diagram:
$$\begin{tikzpicture}
\matrix(m)[matrix of math nodes, row sep=3em,
    column sep=2.5em, text height=1.5ex, text depth=0.25ex]
{\rho^{-1}(U) & \val^{-1}(V)  \\
    U  & V, \\ };
	\path 
	(m-1-1) edge[->] node[auto] {$\simeq$} (m-1-2)
			edge[->] node[auto] {$ \rho $} (m-2-1)
         (m-1-2) edge[->] node[auto] {$\val$} (m-2-2)
         (m-2-1) edge[->] node[auto] {$\simeq$} (m-2-2);
\end{tikzpicture}$$
$V$ being an open subset of $\R^n$, the upper horizontal map an isomorphism of analytic spaces, the lower horizontal map a homeomorphism, and the map $\val$ defined as in \cref{subsec:Berkovich retractions}.
\end{defn}

\begin{ex} \label{ex:affinoid torus over max face}
It follows from the definition of good dlt model $\X$ of $X$ that the Berkovich retraction $\rho_{\X}: X^{\an} \rightarrow \Sk(\X)$ 
%associated to a dlt model $\X$ of $X$ 
is an affinoid torus fibration over the interior of the maximal faces $\tau$ of $\Sk(\X)$. Indeed, the retraction over $\textrm{Int}({\tau})$ only depends on the formal completion of $\X$ along the corresponding 0-dimensional stratum $p$. The pair $(\X, \X_k)$ is snc at $p$, hence the claim.
\end{ex}
\begin{ex}
If $\X /R$ is a toric model of $X = \T$, it follows from \cref{prop: toric retraction} that the Berkovich retraction:
$$\rho_{\X}: \widehat{\X}_{\eta} \fl \Sk(\X)$$
is an affinoid torus fibration over the interior of $\Sk(\X)$.
This also holds when $X$ is a regular proper toric variety over $K$, and $\X$ a regular proper toric model, by \cite[Theorem A.4]{GJKM}.
\end{ex}
Note that the above definition implies that $B$ is a topological manifold at $b$; in the case of a Berkovich retraction $\rho_{\X}$, this does not necessarily hold at every point of $\Sk(\X)$.
\vspace{10pt}

Given a continuous map $\rho : X^{\text{an}} \fl B$, we denote by $B^{\textrm{sm}}$ the locus of points in $B$ where $\rho$ is an affinoid torus fibration at; we call $B \setminus B^{\textrm{sm}}$ the \emph{discriminant} or {singular locus} of $B$. $B^{\textrm{sm}}$ is endowed with an integral affine structure; we recall the definition and describe such structure.

\begin{defn}
An \emph{integral affine structure} on a topological
manifold is an atlas of charts with transition functions in  $\textrm{GL}_n(\mathbb{Z}) \ltimes \mathbb{R}^n$. 
\end{defn}

\begin{defn}
An \emph{integral affine function} on an open subset of $\R^n$ is a continuous real-valued function locally of the form $f(x_1,\ldots,x_n)=a_1 x_1 + \ldots + a_n x_n +b $, with $a_i \in \Z$ and $b \in \R$. We denote by $\textrm{Aff}_{\R^n}$ the sheaf of integral affine functions on $\R^n$.
\end{defn}

\begin{lem}[{\cite[2.1]{KontsevichSoibelman}}]
An integral affine structure on a topological manifold $M$ is equivalent to the datum of a subsheaf $\mathrm{Aff}_{M}$ of the sheaf of continuous functions on $M$ such that $(M, \mathrm{Aff}_{M})$ is locally isomorphic to $(\R^n, \mathrm{Aff}_{\R^n})$.
\end{lem}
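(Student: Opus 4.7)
The plan is a standard back-and-forth, in the spirit of how a manifold structure can be recovered either from an atlas or from its sheaf of smooth functions. I would construct maps in both directions and verify they are mutually inverse, with the only non-formal input being a local rigidity statement for homeomorphisms preserving $\mathrm{Aff}_{\R^n}$.

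For the forward direction, starting from an atlas $\{(U_\alpha, \varphi_\alpha)\}$ of $M$ with transition functions in $\textrm{GL}_n(\Z) \ltimes \R^n$, I define $\mathrm{Aff}_M$ by declaring that a continuous $f : U \to \R$ on an open $U \subset M$ belongs to $\mathrm{Aff}_M(U)$ iff $f \circ \varphi_\alpha^{-1}$ is an integral affine function on $\varphi_\alpha(U \cap U_\alpha)$ for every chart. This defines a subsheaf of the sheaf of continuous functions, and the definition is independent of the chart: since the transition functions $\varphi_\beta \circ \varphi_\alpha^{-1}$ are integral affine, pre-composition with them preserves the class of integral affine functions, so the condition for $\varphi_\alpha$ and for $\varphi_\beta$ agree on the overlap. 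Tautologically each $\varphi_\alpha$ yields an isomorphism $(U_\alpha, \mathrm{Aff}_M|_{U_\alpha}) \simeq (\varphi_\alpha(U_\alpha), \mathrm{Aff}_{\R^n}|_{\varphi_\alpha(U_\alpha)})$.

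For the backward direction, given a subsheaf $\mathrm{Aff}_M$ satisfying the hypothesis, around every point I pick a homeomorphism $\varphi : U \to V \subset \R^n$ realizing an isomorphism $(U, \mathrm{Aff}_M|_U) \simeq (V, \mathrm{Aff}_{\R^n}|_V)$; the collection of such $\varphi$ forms an atlas and the remaining content is that transition functions lie in $\textrm{GL}_n(\Z) \ltimes \R^n$. The key lemma I would isolate is the following: any homeomorphism $\psi : V_1 \to V_2$ between connected open subsets of $\R^n$ such that $\psi^* \mathrm{Aff}_{\R^n}|_{V_2} = \mathrm{Aff}_{\R^n}|_{V_1}$ is the restriction of an element of $\textrm{GL}_n(\Z) \ltimes \R^n$. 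Granted this, applied to $\psi = \varphi_\beta \circ \varphi_\alpha^{-1}$ on each connected component of the overlap, the claim follows.

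To prove the lemma, I apply $\psi^*$ to the global integral affine coordinate functions $y_1, \ldots, y_n$ on the target: each $y_i \circ \psi$ lies in $\mathrm{Aff}_{\R^n}(V_1)$, hence is locally of the form $\sum_j a_{ij} x_j + b_i$ with $a_{ij} \in \Z$, $b_i \in \R$, and these coefficients are locally constant, so constant on the connected $V_1$. Thus $\psi$ is the restriction of an affine map $x \mapsto A x + b$ with $A \in \mathrm{Mat}_n(\Z)$. Applying the same argument to $\psi^{-1}$ gives an inverse affine map with integer matrix, forcing $A \in \textrm{GL}_n(\Z)$. The main obstacle of the whole proof is precisely this last step: one needs to rule out the possibility that $A$ is an integer matrix with $\det A = \pm 1$ failing, which would allow $\psi$ to be a bijection of sets but not preserve the lattice of affine-integral directions; this is exactly what the hypothesis that $\psi^{-1}$ also preserves $\mathrm{Aff}_{\R^n}$ rules out. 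Finally, the two constructions are mutually inverse: starting from an atlas, building $\mathrm{Aff}_M$, and then extracting a maximal atlas recovers the original up to refinement; and starting from $\mathrm{Aff}_M$, building an atlas, and then taking the associated sheaf of integral affine functions recovers $\mathrm{Aff}_M$ because the condition is local and holds chart-by-chart by construction.
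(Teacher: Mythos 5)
Your proof is correct and is the standard argument establishing the equivalence between the atlas and sheaf formulations of an integral affine structure. The paper does not give its own proof — it cites \cite[2.1]{KontsevichSoibelman} — so there is nothing in the paper to compare against; the key lemma you isolate (any homeomorphism between connected opens of $\R^n$ that pulls back $\mathrm{Aff}_{\R^n}$ to $\mathrm{Aff}_{\R^n}$ is the restriction of an element of $\mathrm{GL}_n(\Z) \ltimes \R^n$) is exactly the right rigidity input, and your treatment of the determinant issue via $\psi^{-1}$ is the correct way to promote $A \in \mathrm{Mat}_n(\Z)$ to $A \in \mathrm{GL}_n(\Z)$.
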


If $\rho$ is an affinoid torus fibration over $B^{\textrm{sm}} \subseteq B$, the integral affine structure on $B^{\textrm{sm}}$ is the pull-back of $\textrm{Aff}_{\R^n}$ via the charts in \cref{defn:affinoid torus fibration}. An alternative description of this structure is given in \cite[4.1, Theorem 1]{KontsevichSoibelman}: let $U \subset B^{\textrm{sm}}$ be a connected open subset. Then if $h$ is an invertible analytic function on $\rho^{-1}(U)$, its modulus $\lvert h \rvert$ is constant on the fibers of $\rho$ by the maximum principle, so that it defines a continuous function on the base. We now have:
$$ \mathrm{Aff}_{ B^{\textrm{sm}}}(U) = \{ - \log\lvert h \rvert \, | \, h \in \mathcal{O}^{\times}_{X^{\an}} (\rho^{-1}(U))\}.$$

\begin{rem}
Given an integral affine structure on a topological manifold $M$, there is a \emph{monodromy representation} $$T: \pi_1(M) \rightarrow \textrm{GL}_n(\mathbb{Z}) \ltimes \mathbb{R}^n$$ defined by covering a loop in $M$ by affine charts and composing the corresponding transition functions. See \cite[2.2]{KontsevichSoibelman} for more details.
\end{rem}
\subsection{The Calabi--Yau case}
Let $X/K$ be a smooth $n$-dimensional Calabi--Yau variety: here, this means that $K_X = \gO_X$ (note that this includes for instance the case of abelian varieties). This is the main case we are interested in for applications.
\\We consider a distinguished class of models of $X$, which we call \emph{good minimal dlt} models. Note that other references may define them in a slightly different way.
%: those are the minimal models, in the sense of MMP. The definition is as follows:
\begin{defn}
Let $X/K$ be a Calabi--Yau variety. A minimal dlt model of $X$ is a dlt model $\X/R$, such that the logarithmic relative canonical divisor is trivial, i.e.
$$ K^{\log}_{\X/R} \coloneqq K_{\X/R} + \X_{k, \red} - \X_k \sim \gO_\X.$$ 
We furthermore say that a minimal model $\X$ is good if it is good in the sense of \cref{defn:dlt}.
\end{defn}
The existence of such models is known when $X$ is defined over an algebraic curve (and is expected to hold in the general case).
\begin{theo}[{\cite[Theorem 1.13]{NXY}}]
Let $X/K$ be a projective Calabi--Yau variety, and assume that $X$ is defined over an algebraic curve.
Then there exists a good minimal model $\X/R$ of $X$. Furthermore, there exists a finite extension $K'/K$ such that the base change $X_{K'}$ admits a good minimal model with reduced special fiber.
\end{theo}
Such models are not unique, but they turn out to have the same skeleton inside $X^{\an}$ by \cite{NX} (even though the triangulation may differ), which is thus a canonical piecewise-linear space associated with $X$.
\begin{defn}
Let $X/K$ be a Calabi--Yau variety. The \emph{essential skeleton} $\Sk(X) \subset X^{\an}$ is the skeleton of any minimal model $\X/R$ of $X$.
\end{defn}
The essential skeleton can also be defined intrinsically as the locus where the weight function associated with a non-vanishing section $\omega \in H^0(X, K_X)$, $\text{wt}_{\omega} : X^{\an} \fl \R$ defined in \cite{MN} reaches its minimum; we refer the reader to $\cite{MN}$ and $\cite{NX}$ for details.
\begin{defn}
Let $X/K$ be a Calabi--Yau variety. We will say that $X$ is \emph{maximally degenerate} if the skeleton $\Sk(X)$ has maximal dimension, i.e. $\dim \Sk(X) = n$.
\end{defn}

\begin{ex}
In the $2$-dimensional case, maximally degenerate Calabi--Yau surfaces coincide with either $K3$ surfaces of Type III, or maximally degenerate abelian surfaces.
\end{ex}

The maximally degenerate case is of great interest to mirror symmetry. In such case, Kontsevich and Soibelman conjecture that the Gromov-Hausdorff limit of the (rescaled) Kähler Ricci-flat metrics on the fibers $X_t$ is the essential skeleton of $X$, endowed with a metric which is given in local affine coordinates by the Hessian $\frac{\partial^2 \phi}{\partial x_i \partial x_j}$ of a convex function $\phi$.
%In the maximally degenerate case - which is the main case of interest for mirror symmetry - the Gromov-Hausdorff limit of the (rescaled) Kähler Ricci-flat metrics on the fibers $X_t$ is conjectured by Kontsevich and Soibelman to be the essential skeleton of $X$, endowed with a metric given in local $\Z$-affine coordinates by the Hessian $\frac{\partial^2 \phi}{\partial x_i \partial x_j}$ of a convex function. 
This is known in the case of abelian varieties by the work of \cite{Odaka2018}, and for Fermat hypersurfaces in $\CP_{\C}^{n+1}$ by \cite{Li}. See also the results in \cite{Li2} for recent progress on this conjecture in the general case.
\\More precisely, the metric spaces $(X_t, \omega_t)$ are conjectured to ``look like'' the total space of a Lagrangian torus fibration over $\Sk(X)$, submersive away from a singular locus of real codimension 2; the affine structure on the base being induced by action-angle coordinates. Building on these considerations, it is reasonable to expect this affine structure to be non-singular in (real) codimension one.

One of the main motivations in non-archimedean mirror symmetry is to reconstruct this affine structure through Berkovich spaces, interpreting Berkovich retractions as non-archimedean avatars of Lagrangian torus fibrations. The main theorem in \cite{NXY} establishes the following.

\begin{theo}[{\cite[Theorem 6.1]{NXY}}]
Let $X/K$ be a maximally degenerate projective Calabi--Yau variety, and let $\X/R$ be a good minimal dlt model of $X$ with reduced special fiber. Then the Berkovich retraction
$$ \rho_{\X} : X^{\an} \fl \Sk(X)$$
is an affinoid torus fibration away from the codimension 2 locus of the triangulation induced by the homeomorphism $\Sk(X) \simeq \mathcal{D}(\X_k)$.
\end{theo}
\noindent This statement is proved by showing that $\X$ is toric along the 1-dimensional strata of the special fiber, which yields on the way a complete description of such models along these strata. 
This description also provides a way to compute the singular $\Z$-affine structure induced on $\Sk(X)$, as well as its monodromy, see \cref{sec:curve}.
\begin{ex}If $S$ is a K3 surface of Type III, and $\X/R$ a good minimal dlt model of $S$, then the map $\rho_{\X}$ is an affinoid torus fibration away from the vertices of $\mathcal{D}(\X_k)$.
\\The induced $\Z$-affine structure with isolated singularities on the 2-sphere matches the one constructed in \cite[1.2]{GHK} and \cite[Proposition 3.10]{Eng}, and has no singularity at a vertex $v_D$ if and only if the corresponding component $D$ of $\X_k$ is toric.
\end{ex}

\section{Toric structure along toric strata}
\label{Section toric thm}

In this section we prove \cref{intro:main thm Z}. We recall the statement and fix the notation.

\begin{thmA}
Let $X/K$ be a smooth projective variety of dimension $n$, and $\X/R$ be a dlt model of $X$ with reduced special fiber $\X_k= \sum_\alpha D_\alpha$, such that every $D_\alpha$ is a Cartier divisor.
\\Let $Z= D_0 \cap D_1 \cap \ldots \cap D_{n-r}$ be an $r$-dimensional stratum of $\X_k$, such that:
\begin{itemize}
\itemsep0pt
\item $\mathring{Z} \subset Z$ is a torus embedding, where $\mathring{Z}= Z \setminus \cup_{\alpha \neq 0,1,\ldots,n-r}D_\alpha$;
%is the open stratum of $Z$;
\item the conormal bundle $\nu_{Z/ \X}^*$ is a nef vector bundle on $Z$;
\item for each $\alpha \notin \{0,...,n-r\}$, the intersection $D_{\alpha} \cap Z$ is either empty or connected.
\end{itemize}
Then the formal completion $\widehat{\X_{/Z}}$ is isomorphic to the formal completion of the normal bundle $\mathcal{N}=\nu_{Z/\X}$ along the zero section. In particular, $\X$ is toric along $Z$ (in the sense of \cref{toricdef}).
\end{thmA}

\noindent Note that the assumptions in \cref{intro:main thm Z} imply that $Z$ is the smooth complete intersection of the irreducible components $D_j$ of $\X_k$ containing $Z$, and thus has simple normal crossing boundary, see \cref{rem dlt}.
Since $Z$ is a complete intersection, the conormal bundle $\nu^*_{Z/ \X}$ is the direct sum of the line bundles $\gO_Z(-D_j)$. Hence, the nefness assumption simply means that the $D_j$'s containing $Z$ are anti-nef divisors on $Z$.

As an immediate consequence of the theorem, we prove that 

\begin{corC}
The retraction $\rho_\X: X^{\an} \rightarrow \Sk(\X)$ is an $n$-dimensional affinoid torus fibration over $\Star(\tau_Z)$. In particular, the integral affine structure induced by $\rho_\X$ on the complement of the faces of $\Sk(\X)$ of codimension $\geqslant 2$ extends to $\Star(\tau_Z)$ with no singularities.
\end{corC}
\begin{proof}
Although this follows from \cref{intro:main thm Z} by \cite[Theorem 6.1]{NXY} (end of the proof) and by \cite[\S 3.4]{NXY}, we sketch the proof for reader's convenience. 
\\As mentioned in \cref{subsec:Berkovich retractions}, the retraction $\rho_{\X}$ over $\Star(\tau_Z)$ only depends on $\widehat{\X_{/Z}}$, so that by \cref{intro:main thm Z} we may assume that $\X$ is a toric $R$-scheme. The equality $\rho_{\X} = \val$ now holds over $\Star(\tau_Z)$ by \cref{prop: toric retraction}, so that it follows from \cref{defn:affinoid torus fibration} that $\rho_{\X}$ is an affinoid fibration over $\Star(\tau_Z)$.
\end{proof}

%\textcolor{red}{Remove this paragraph if we agree on Section 2.6. }In the case where $\dim Z =n$, which is the case relevant to us for proving \cref{intro:main quintic}, we have the following explicit description of the $\Z$-affine structure on $\Star(v_Z)$ - note that it only depends on $Z$ and not on how $Z$ sits inside $\X$. Using \cref{intro:main thm Z}, we may work on $\cY = \mathscr{N}_{Z /\X}$, which satisfies the equality: $$\overline{\Star}(v_Z) = \Sk(\cY).$$

\subsection{Notation and strategy}

We set $J=\{0,1,\ldots, n-r\}$ such that $Z=\cap_{j \in J} D_j$. Since for every irreducible component $D$ of $\X_k$, the intersection $D \cap Z$ is connected by assumption, this allows us to denote by $D_l$ with $l \in L$ the components of $\X_k$ intersecting $Z$ transversally along $Z_l \coloneqq Z \cap D_l$, so that the toric boundary of $Z$ is given by $\De_Z = \sum_{l \in L} Z_l$.

\begin{rem} \label{rem dlt}
The dlt assumption on $\X$ and the toricness of $Z$ ensure that $Z$ is smooth, and that $(Z, \De_Z)$ is an snc pair.
Indeed, the singular locus of $Z$ is a union of torus invariant subvarieties (see \cite[Proposition 11.1.2]{CoxLittleSchenck2011}), hence the generic point of a component of the singular locus is the generic point of a stratum of $\De_Z$. However, $(Z,\De_Z)$ is a dlt pair, thus snc at the generic point of each stratum of $\De_Z$.
\end{rem}

\begin{rem}
The smoothness of $Z$ and the assumption that the components of $\X_k$ are Cartier divisors imply that $\X$ is regular at any point of $Z$. Indeed, for any point $p \in Z$ and $j \in J$, let $z_j \in \gO_{\X,p}$ be a local equation of $D_j$ at $p$. As $\gO_{Z,p} \simeq \gO_{\X,p} / (z_0,\ldots,z_{n-r})$ is a regular local ring of dimension $r$, $(z_0,\ldots,z_{n-r})$ can be extended to form a regular system of parameters for $\gO_{\X,p}$.
\end{rem}

We denote by $\Sigma \subset N_{\R}$ the fan of $Z$. Its rays are given by $\R_{\geqslant 0} u_l$ for $l \in L$, with primitive generators $u_l$; the maximal cones of $\Sigma$ are in bijection with the set of unordered $r$-tuples $\{i_1, \ldots,i_r\} \in L^r$ such that $\cap_{\beta=1}^{r}D_{i_\beta} \cap Z \neq \varnothing$. For a maximal cone $\sigma$ of $\Sigma$, we write $L_\sigma \coloneqq \{l \in L \,|\, u_l \in \sigma \}$.

\begin{lem} \label{lemma det}
For any maximal cone $\sigma$ of $\Sigma$, we have $\det ((u_l)_{l \in L_\sigma}) = \pm 1.$
\end{lem}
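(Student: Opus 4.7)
The plan is to derive this from the smoothness of $Z$ together with the standard correspondence between smoothness of a toric variety and unimodularity of its fan. First, I would invoke \cref{rem dlt}, which establishes that the dlt assumption on $\X$ combined with the fact that $\mathring{Z} \subset Z$ is a torus embedding forces $Z$ to be smooth (with simple normal crossing boundary $\De_Z$). Since $\sigma$ is an $r$-dimensional (maximal) cone of the fan $\Sigma$ of the smooth toric $r$-fold $Z$, the standard criterion for smoothness of a toric variety (recalled in the paragraph preceding \cref{lemma pic toric}: each top-dimensional cone of $\Sigma$ is $\GL(N)$-isomorphic to the standard octant $\R^r_{\ge 0} \subset \R^r$) implies that the primitive generators $(u_l)_{l \in L_\sigma}$ form a $\Z$-basis of the cocharacter lattice $N$.

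Once $(u_l)_{l \in L_\sigma}$ is known to be a $\Z$-basis of $N \simeq \Z^r$, the change-of-basis matrix from the fixed identification $N \simeq \Z^r$ to this basis lies in $\GL_r(\Z)$, so its determinant is a unit in $\Z$, i.e. $\pm 1$. This finishes the proof.

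The only subtle point is verifying the hypotheses needed to apply \cref{rem dlt}: we need $\mathring{Z} \subset Z$ to be a torus embedding (which is part of the hypothesis of Theorem B) and $\X$ to be dlt with $Z$ a stratum of $\X_k$. Both are in force under the running assumptions of this section, so there is no real obstacle; the statement is essentially a bookkeeping consequence of smoothness of $Z$ plus the dictionary between smooth toric varieties and unimodular fans.
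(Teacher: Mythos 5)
Your proof is correct and follows exactly the paper's argument: invoke \cref{rem dlt} to get smoothness of $Z$, then use the smooth-toric/unimodular-fan dictionary to conclude the primitive generators of $\sigma$ form a $\Z$-basis of $N$, hence have determinant $\pm 1$. You have merely unpacked the paper's one-sentence proof into its constituent steps.
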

\begin{proof}
The smoothness of $Z$ (see \cref{rem dlt}) implies that the primitive generators of $\sigma$ form a $\Z$-basis of $N$, which is equivalent to the condition $\det ((u_l)_{l \in L_\sigma}) = \pm 1.$
\end{proof}

Let $\mathcal{N} \coloneqq \nu_{Z/ \X} \xrightarrow{p} Z$ be the normal bundle of $Z$ in $\X$, and denote by $Z \subset \mathcal{N}$ the zero section. For each $j$, we choose a Cartier divisor $F_j$ on $Z$ such that $ \gO_{\X}(D_j)_{| Z}= \gO_Z(F_j)$, so that $ \mathcal{N}= \oplus_{j \in J} \gO_Z(F_j)$.
Since any Cartier divisor on $Z$ is linearly equivalent to a toric one, for any $j=1,\ldots, n-r$, there exist integers $\lambda_{j,l}$ such that 
\begin{equation} \label{equ:lambda relation}
    \gO_Z(F_j) = \gO_Z\big(- \sum_{l \in L} \lambda_{j,l}Z_l\big).
\end{equation}
For $j=0$ we set $\lambda_{0,l} \coloneqq 1 - \sum_{j \in J \setminus \{0\}} \lambda_{j,l}$ and verify that
\begin{align*}
    \gO_Z(F_0) 
    %& =  \gO_{\X}(D_0)_{| Z}
    =  \gO_{\X}(D_0 - \X_k)_{| Z} 
    % \\=  \gO_{\X}(-\sum_{j \in J \setminus \{0\}} D_j - \sum_{l \in L} D_l)_{| Z} \\
     = \gO_Z\Big(-\sum_{j \in J \setminus \{0\}} F_j - \sum_{l \in L} Z_l \Big)
    %= \gO_Z\Big( \sum_{l \in L} (\sum_{j \in J \setminus \{0\}} \lambda_{j,l} -1 ) Z_l\Big)
    = \gO_Z(- \sum_{l \in L} \lambda_{0,l} Z_l).
\end{align*}
We obtain that  $\mathcal{N} = \oplus_{j \in J} \gO_Z\big(- \sum_{l \in L} \lambda_{j,l}Z_l\big)$ and for all $l$ in $L$ \begin{equation} \label{equ special fb}
   \sum_{j \in J} \lambda_{j,l} =1.
\end{equation}
The normal bundle $\mathcal{N}$ is a toric variety of dimension $n+1$. The corresponding fan $\hat{\Sigma}$ lies in $N_\R \times \R^J$ and consists of the following cones and their faces (see \cite[\S 7.3]{CoxLittleSchenck2011} for a reference). Let $e_0,\ldots,e_{n-r}$ be the standard basis of $\R^J$; given a cone $\sigma \in \Sigma$, we have 
$$ \hat{\sigma}= \textrm{Cone}( (0,e_0),\ldots,(0,e_{n-r}),(u_l,(\lambda_{j,l})) \, | \, u_l \in \sigma) \in \hat{\Sigma}.$$
In particular, we denote the rays of $\hat{\Sigma}$ by $$v_j=(0,e_j) \textrm{ for }j \in J, \quad v_l=(u_l, (\lambda_{j,l})) \textrm{ for }l \in L.$$

\begin{prop} For any 1-dimensional toric stratum $C \subseteq Z$
\begin{equation} \label{equ fan}
\sum_{j \in J} (C \cdot D_j ) v_j + \sum_{l \in L} (C \cdot D_l) v_l =0 \; \text{in} \; N_{\R} \times \R^J.
\end{equation}
\end{prop}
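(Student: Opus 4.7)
The plan is to check the identity coordinate by coordinate, after splitting $N_\R \times \R^J$ into its two factors. The only inputs needed will be the toric relation \eqref{equ relation fan} for the curve $C$ inside $Z$, and the definition \eqref{equ:lambda relation} of the integers $\lambda_{j,l}$.

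First I would project the identity onto $N_\R$. Since $v_j = (0,e_j)$ contributes nothing to this factor, the $N_\R$-component of the left-hand side equals $\sum_{l \in L} (C \cdot D_l) u_l$. Because $C \subset Z$ and each $D_l$ with $l \in L$ meets $Z$ transversally along $Z_l$, one has $\gO_\X(D_l)|_Z = \gO_Z(Z_l)$ and hence $(C \cdot D_l)_\X = (C \cdot Z_l)_Z$. Substituting and applying the toric relation \eqref{equ relation fan} for the toric curve $C \subset Z$ then gives $\sum_{l \in L} (C \cdot Z_l) u_l = 0$.

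Next I would fix $j \in J$ and read off the $j$-th coordinate of the projection onto $\R^J$. Only $v_j$ among the $(v_{j'})_{j' \in J}$ contributes to this coordinate, while each $v_l$ contributes $\lambda_{j,l}$; thus the $j$-th coordinate equals
\[
(C \cdot D_j) + \sum_{l \in L} \lambda_{j,l}\,(C \cdot D_l) = (C \cdot D_j) + \sum_{l \in L} \lambda_{j,l}\,(C \cdot Z_l)_Z.
\]
To deal with the first summand, I would use that $Z \subset D_j$, so that $\gO_\X(D_j)|_Z$ is by definition the normal bundle factor $\gO_Z(F_j)$; by \eqref{equ:lambda relation} this equals $\gO_Z(-\sum_l \lambda_{j,l} Z_l)$. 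Restricting once more to $C$ then yields
\[
(C \cdot D_j)_\X \;=\; \deg_C \bigl(\gO_Z(F_j)|_C\bigr) \;=\; -\sum_{l \in L} \lambda_{j,l}\,(C \cdot Z_l)_Z,
\]
which cancels the second summand and gives $0$ in the $j$-th coordinate.

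There is no serious obstacle here: the identity is essentially a bookkeeping reformulation of the one-dimensional toric relation on $Z$, extended by the data of the normal-bundle direction recorded by the $\lambda_{j,l}$. The only care needed is to distinguish intersection numbers computed on $\X$ from those computed on $Z$, and to use the correct sign convention in $\gO_Z(F_j) = \gO_Z(-\sum_l \lambda_{j,l} Z_l)$.
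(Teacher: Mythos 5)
Your proof is correct and follows essentially the same route as the paper's: split the identity into the $N_\R$-component (handled by \cref{equ relation fan}) and the $\R^J$-components (handled by $(C\cdot D_j) = C\cdot F_j = -\sum_l \lambda_{j,l}(C\cdot Z_l)$ from \cref{equ:lambda relation}). The only difference is that you spell out more carefully the passage from intersection numbers on $\X$ to those on $Z$ via restriction of Cartier divisors, which the paper leaves implicit.
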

\begin{proof}
The relation in \cref{equ fan} boils down to the following:
$$
\begin{cases}
\sum_{l \in L} (C \cdot D_l) u_l=0 \\
(C \cdot D_j ) + \sum_{l \in L} \lambda_{j,l} (C \cdot D_l)  =0 \; \forall j \in J. \\
\end{cases}
$$
The first one follows directly from \cref{equ relation fan} in the fan $\Sigma$ of $Z$; the second comes from the construction of $\lambda_l$, and in particular from $ C \cdot  D_j= C \cdot F_j= - C \cdot \sum_{l \in L} \lambda_{j,l} Z_l $.
\end{proof}

The map 
$$   \ord(t): \lvert \hat{\Sigma} \rvert \subset N_\R \times \R^J \rightarrow \R_{\geqslant 0} \quad
    (u,w) \mapsto \sum_{j=0}^{n-r} w_j $$
is $\Z$-linear, sends all the primitive generators of the rays of $\hat{\Sigma}$ to $1$ by \cref{equ special fb}, and is compatible with $\hat{\Sigma}$ and the fan of $\A^1_k$. Thus, it induces a toric morphism $t: \mathcal{N} \rightarrow \A^1_k$ whose fiber over $0$ is the toric boundary of $\mathcal{N}$. The base change $\mathscr{N} \coloneqq \mathcal{N} \times_{\A^1} R$ to $R$ is a toric $R$-scheme, whose generic fiber is isomorphic to $\mathbb{G}_{m,K}^n$. The special fiber $\mathscr{N}_k$ can be written as $\cN_k = \sum_{i \in J \cup L} E_i$, where the combinatoric of intersections between components is exactly the same as in $\X_k$.
\vspace{5pt}

We prove \cref{intro:main thm Z} by constructing a formal isomorphism
$$ f : \widehat{\X_{/Z}} \xrightarrow{\simeq} \widehat{\mathscr{N}_{/Z}}.$$ More specifically, we proceed as follows. We set the notations $\mathfrak{X} =\widehat{\X_{/Z}} $ and $\mathfrak{N} = \widehat{\cN_{/Z}}$.
%As mentioned above, the special fiber $\mathscr{N}_k$ can naturally be written as $\cN_k = \sum_{i \in J \cup L} E_i$, where the combinatorics of intersections between components is exactly the same as in $\X_k$. For the proof of \cref{intro:main thm Z} we will proceed as follows. We set the notations $\mathfrak{X} =\widehat{\X_{/Z}} $ and $\mathfrak{N} = \widehat{\cN_{/Z}}$.
\begin{itemize}
    \item (\cref{proof:divisors,section:one max cone}) 
    Let $\sigma \in \Sigma$ be a maximal cone. Denote by $Z_\sigma$ and $\mathcal{N}_{\sigma}\coloneqq \mathcal{N}_{Z_{\sigma}/ \X}$ the corresponding toric affine charts in $Z$ and $\mathcal{N}$ respectively.
    This induces an open formal subscheme of $\mathfrak{N}$, which we denote by $\mathfrak{N}_{\sigma}$. 
    We construct a morphism 
    %between $\mathfrak{N}_{\sigma}$ and 
    $$f_\sigma: \mathfrak{X} \setminus \big( \cup_{l \in L \setminus L_\sigma} D_l\big) \eqqcolon  \mathfrak{X}_{\sigma} \, \rightarrow \mathfrak{N}_{\sigma} ,$$
   in a similar manner to \cite{NXY}: we construct $n+1$ divisors $W^\sigma_j$ and $W^\sigma_i$ on $\X$, whose defining equations on the chart $\mathfrak{X}_{\sigma}$ yields the morphism $f_{\sigma}$. The equations are induced by sections of $\gO_Z(W^\sigma_j)$ and $\gO_Z(W^\sigma_i)$: these are first constructed on $Z$, then extended to $\mathfrak{X}$ by the nef condition on the conormal bundle $\nu_{Z/ \X}^*$ assumed in \cref{intro:main thm Z}, which ensures the vanishing of higher cohomology groups for the tensor powers of $\nu_{Z/ \X}^*$.
    
    \item (\cref{section:two max cones,proof:morphism}) Let $\sigma$ and $\sigma'$ be two maximal cones of $\Sigma$ intersecting along a face of codimension one. We establish relations among the respective divisors and construct sections on $\mathfrak{X}_{\sigma'}$ from those on $\mathfrak{X}_{\sigma}$. This allows us to prove that the morphisms $f_{\sigma}$ on the charts $\mathfrak{X}_{\sigma}$'s can be chosen so that they are compatible on the overlaps $\mathfrak{X}_{\sigma}  \cap \mathfrak{X}_{\sigma'}$. This yields a well defined morphism $f$ which extends
    %$$ f : \widehat{\X_{/Z}} \rightarrow \widehat{\mathscr{N}_{/Z}},$$ extending 
    the identity on $Z$ and preserves the ideal $\mathscr{I}_Z$, so that it
    turns out to be an isomorphism.
\end{itemize}

%\begin{rem} \label{rem connected}In general, given an irreducible component $D$ of $\X_k$, the intersection $D \cap Z$ may not be connected, and thus be a sum of disjoint toric boundary components of $Z$. In that case, we still denote the toric boundary of $Z$ as $\sum_{l \in L} Z_l$; for each $l \in L$ there exists $D_l \subset \X_k$ such that $Z_l$ is a connected component of $D_l \cap Z$.We now consider the normal bundle $\mathcal{N}$ of $Z$ inside $\X$; its fan $\hat{\Sigma}$ is given by the same formulas as before. However, in this case, its toric components are no longer in bijection with the components of $\X_k$ intersecting $Z$: we label its toric components as $D_j$ for $j \in J$ and $\tilde{D}_l$ for $l \in L$, with $\tilde{D}_l \cap Z = Z_l $ inside $\mathcal{N}$ (the divisor $\tilde{D}_l$ should be interpreted on $\X$ as the branch of $D$ around $Z$ cutting out $Z_l$).//It is now straightforward to see that \cref{equ fan} still holds in this case if we replace the $D_l$'s by the $\tilde{D}_l$'s. Moreover, in what follows, there will be no ambiguity between $D_l$ and $\tilde{D}_l$ since we will be working on the open subsets $\mathcal{N}_{\sigma}$, on which the intersections $D \cap Z_{\sigma}$ are always connected (as any two toric boundary components of $Z_{\sigma}$ have non-empty intersection), so that we will keep writing $D_l$ with a slight abuse of notation.\end{rem}

\subsection{Construction of the divisors} \label{proof:divisors}
We set 
$$\de := \sum_{l \in L} u_{l} \otimes D_l \, \in \, N \otimes \Div_0(\X) \simeq (\Div_0(\X))^r;$$ 
this is an $r$-tuple of divisors on $\X$. Moreover, the restriction of any of these to $Z$ is a principal divisor by \cref{lemma pic toric}.
Given a maximal cone $\sigma$ of $\Sigma$, for any $i \in L_{\sigma}$, we define 
%the divisor on $\X$
$$
W^{\sigma}_{i} \coloneqq - \frac{\det (\de , (u_{l})_{l \in L_{\sigma} \setminus \{i\}})}{\det (u_i , (u_{l})_{l \in L_{\sigma} \setminus \{i\}})} \in \Div_0(\X)$$
where the column vectors $u_l$ are in the same order in the numerator and in the denominator, and the denominator has value $\pm 1$ by \cref{lemma det}.

\begin{lem} \label{lemma:W_i}
The divisor $W^{\sigma}_i$ has multiplicity $-1$ along $D_i$, multiplicity $0$ along $D_l$ for $l \in L_{\sigma} \setminus \{i\}$, and %multiplicity $0$ 
along $D_j$ for $j \in J$. In other words, we may write:
$$W^{\sigma}_i = -D_i + \sum_{l \in L \setminus L_{\sigma}} c_{i,l} D_l$$
for some coefficients $c_{i,l} \in \Z$. Moreover, the restriction of $W^\sigma_i$ to $Z$ is principal.
\end{lem}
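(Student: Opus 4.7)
The proof is essentially a direct computation unpacking the determinantal definition of $W^\sigma_i$, followed by an appeal to the smoothness lemma and to the toric description of $\Pic(Z)$.

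My first step is to expand the determinant in the numerator along the column containing $\de$. Writing $\de = \sum_{l \in L} u_l \otimes D_l$, multilinearity gives
\[
\det \bigl(\de, (u_{l'})_{l' \in L_\sigma \setminus \{i\}}\bigr) \;=\; \sum_{l \in L} \det\bigl(u_l, (u_{l'})_{l' \in L_\sigma \setminus \{i\}}\bigr)\, D_l ,
\]
so that the multiplicity of $W^\sigma_i$ along $D_l$ is $-\det(u_l,(u_{l'})_{l' \in L_\sigma \setminus \{i\}}) / \det(u_i,(u_{l'})_{l' \in L_\sigma \setminus \{i\}})$ for $l \in L$, and $0$ for the components $D_j$ with $j \in J$ (since such $D_j$ simply do not appear in $\de$). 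The three cases then follow: for $l=i$ the ratio is $-1$; for $l \in L_\sigma \setminus \{i\}$ the numerator determinant has two equal columns and vanishes; for $l \in L \setminus L_\sigma$ the numerator is some integer, and by \cref{lemma det} the denominator is $\pm 1$, so the quotient $c_{i,l}$ is a well-defined integer. This gives the claimed expression $W^\sigma_i = -D_i + \sum_{l \in L \setminus L_\sigma} c_{i,l} D_l$.

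For the second assertion, I restrict $W^\sigma_i$ to $Z$. Since each $D_l$ with $l \in L$ meets $Z$ transversally along $Z_l$, the restriction $\de|_Z$ is precisely the $r$-tuple $\sum_{l \in L} u_l \otimes Z_l \in N \otimes \Div^\T(Z)$, whose components are principal on $Z$ by \cref{lemma pic toric}. The same multilinear expansion as above, applied now over $Z$, shows that $W^\sigma_i|_Z$ is an integer linear combination of the $r$ principal divisors making up $\de|_Z$, hence principal.

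The only delicate point is making sure that the coefficients $c_{i,l}$ are integers rather than rationals; this is exactly where the smoothness of $Z$ (through \cref{lemma det}) enters, guaranteeing that the normalizing denominator $\det(u_i,(u_{l'})_{l' \in L_\sigma \setminus \{i\}})$ is a unit in $\Z$. Everything else is pure multilinear algebra with the integer matrix formed by the primitive generators $u_l$ of the rays of $\Sigma$, together with the fact that $\de$ involves no components containing $Z$.
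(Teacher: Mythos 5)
Your proof is correct and follows essentially the same route as the paper: expand the numerator determinant by multilinearity in the column containing $\de$, read off the multiplicities case by case (with the denominator being $\pm 1$ by \cref{lemma det} ensuring integrality of the $c_{i,l}$), and note that $W^\sigma_i$ is a $\Z$-linear combination of the components of $\de$, each of which restricts to a principal divisor on $Z$ by \cref{lemma pic toric}. Your write-up is a little more explicit than the paper's — in particular spelling out that the vanishing for $l \in L_\sigma \setminus \{i\}$ comes from a repeated column, and flagging where integrality of the coefficients is actually needed — but there is no difference in substance.
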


\begin{proof}
The statement on the multiplicities follows from the definition of $W^\sigma_i$, as
$$W^{\sigma}_i = - \sum_{l \in L}  \frac{\det (u_l , (u_{l'})_{l' \in L_{\sigma} \setminus \{i\}})}{\det (u_i , (u_{l'})_{l' \in L_{\sigma} \setminus \{i\}})}  D_l.
$$ Moreover, $W^\sigma_i$ is a linear combination of the divisors of the $r$-tuple $\de$, hence its restriction to $Z$ is principal by \cref{lemma pic toric}.
\end{proof}

For $j \in J$, we define the divisor on $\X$ 
\begin{align} \label{equ:W_j}
\begin{split}
W^{\sigma}_j 
& \coloneqq -D_j  
- \sum_{l \in L} \lambda_{j,l} D_l 
- \sum_{i \in L_{\sigma}} \lambda_{j,i} W^{\sigma}_i \\
& = -D_j  
- \sum_{l \in L \setminus L_{\sigma}} \lambda_{j, l} D_{l} 
- \cancel{\sum_{l \in L_\sigma} \lambda_{j,l} D_l}
- \sum_{i \in L_{\sigma}} \lambda_{j,i} \big( \cancel{ - D_{i}} +  \sum_{l \in L \setminus L_{\sigma}} c_{i,l} D_l \big) \\
& = -D_j  + \sum_{l \in L \setminus L_{\sigma}} d_{j, l} D_l \quad \textrm{ with } \quad d_{j, l}=- \lambda_{j,l} - \sum_{i \in L_{\sigma}} \lambda_{j,i} c_{i,l}.
\end{split}
\end{align}
The restriction of $W^{\sigma}_j$ to $Z$ is a principal divisor, as the ${W^{\sigma}_i}_{| Z} $ are principal and ${-D_j}_{| Z}$ is linearly equivalent to $ \sum_{l \in L} \lambda_{j,l} Z_l$
%is linearly equivalent to $0$ 
by \cref{equ:lambda relation}. 

\begin{lem}
The relation
$ \sum_{j \in J} W^{\sigma}_j + \sum_{i \in L_{\sigma}} W^{\sigma}_i = - \sum_{j \in J} D_j - \sum_{l \in L} D_l$
holds.
\end{lem}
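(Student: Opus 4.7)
The identity is really a direct bookkeeping exercise once the definitions are unpacked, so my plan is simply to substitute and use the normalization \eqref{equ special fb}. Recall that the coefficient $\lambda_{0,l}$ was defined exactly so that $\sum_{j \in J} \lambda_{j,l} = 1$ for every $l \in L$; this will be the crucial input.

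The plan is to start from the defining formula
\[
W^\sigma_j = -D_j - \sum_{l \in L} \lambda_{j,l}\, D_l - \sum_{i \in L_\sigma} \lambda_{j,i}\, W^\sigma_i,
\]
and sum over $j \in J$. Collecting coefficients we get
\[
\sum_{j \in J} W^\sigma_j = -\sum_{j \in J} D_j - \sum_{l \in L} \Bigl(\sum_{j \in J} \lambda_{j,l}\Bigr) D_l - \sum_{i \in L_\sigma} \Bigl(\sum_{j \in J} \lambda_{j,i}\Bigr) W^\sigma_i.
\]
Then I would apply the identity $\sum_{j \in J} \lambda_{j,l} = 1$ (valid for all $l \in L$, in particular for $l \in L_\sigma$) to replace each parenthesised sum by $1$. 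This immediately yields
\[
\sum_{j \in J} W^\sigma_j = -\sum_{j \in J} D_j - \sum_{l \in L} D_l - \sum_{i \in L_\sigma} W^\sigma_i,
\]
and rearranging gives the claimed equality.

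There is really no obstacle: the whole statement is a linear identity between divisors, and the definition of $W^\sigma_j$ has been engineered so that the $W^\sigma_i$ terms reappear with the right multiplicities after summation. The only thing to keep track of is that \eqref{equ special fb} was set up precisely to make this telescoping work, so the only "content" of the lemma is the bookkeeping of the $\lambda_{j,l}$ weights.
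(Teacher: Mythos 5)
Your proof is correct. You sum the defining formula $W^{\sigma}_j = -D_j - \sum_{l \in L} \lambda_{j,l} D_l - \sum_{i \in L_{\sigma}} \lambda_{j,i} W^{\sigma}_i$ over $j\in J$ and invoke the normalization $\sum_{j\in J}\lambda_{j,l}=1$ from \eqref{equ special fb} for every $l\in L$ (in particular for $l\in L_\sigma$); the $W^\sigma_i$ terms then reappear with coefficient $1$ and move to the left-hand side, giving the result in one step.

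This is a genuinely different, and arguably cleaner, route than the paper's. The paper instead passes to the expanded component-by-component expressions of $W^\sigma_i = -D_i + \sum_{l\in L\setminus L_\sigma} c_{i,l}D_l$ and $W^\sigma_j = -D_j + \sum_{l\in L\setminus L_\sigma} d_{j,l}D_l$ (with $d_{j,l}=-\lambda_{j,l}-\sum_{i\in L_\sigma}\lambda_{j,i}c_{i,l}$), forms the sum $W$, and checks $\ord_{D_\alpha}(W)=-1$ separately for $\alpha\in J$, $\alpha\in L_\sigma$, and $\alpha\in L\setminus L_\sigma$, using \eqref{equ special fb} in the last case. Your summation argument bypasses the auxiliary coefficients $c_{i,l}$ and $d_{j,l}$ entirely, exploiting the fact that the defining relation is already written in a form where $\sum_j\lambda_{j,\cdot}=1$ does all the telescoping at once; the paper's version, while more verbose, has the minor advantage of recording along the way what the coefficients of $W$ look like in the fixed basis $\{D_\alpha\}$, which matches the bookkeeping used elsewhere in the section.
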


\begin{proof}
Write $W \coloneqq \sum_{j \in J} W^{\sigma}_j + \sum_{i \in L_{\sigma}} W^{\sigma}_i \in \Div_0(\X).$
We have
\begin{align*}
\textrm{for }j \in J \quad
\ord_{D_j}(W) 
& = \ord_{D_j}(W^{\sigma}_j)=-1 \\
\textrm{for }i \in L_{\sigma}  \quad
\ord_{D_i}(W)
& = \ord_{D_i}(W^{\sigma}_i)=-1 \\
\textrm{for }l \in L \setminus L_{\sigma}  \quad
\ord_{D_l}(W)
& = \sum_{j \in J} d_{j,l} + \sum_{i \in L_\sigma} c_{i,l}  = - \sum_{j \in J} \lambda_{j,l} + \sum_{i \in L_\sigma} c_{i,l} ( 1 - \sum_{j \in J} \lambda_{j,i}) = -1
\end{align*}
by \cref{lemma:W_i}, \cref{equ:W_j} and \cref{equ special fb}.
\end{proof}

\subsection{Construction of the sections for a maximal cone} \label{section:one max cone}

%In the notation of the previous sections, 
Let $\sigma$ be a maximal cone of $\Sigma$.
We denote by $\Ld^{\sigma}_j$ and $\Ld^{\sigma}_i$ the line bundles on $\mathfrak{X}$ induced respectively by $\gO_{\X}(W^{\sigma}_j)$ for $j \in J$, and by $\gO_{\X}(W^{\sigma}_{i})$ for $i \in L_\sigma$.
Since $W^{\sigma}_j$ and $W^{\sigma}_{i}$ are principal on $Z$, the restrictions ${\Ld^{\sigma}_j}_{|Z}$ and ${\Ld^{\sigma}_i}_{|Z}$ are trivial line bundles on $Z$, thus we may choose non-vanishing global sections $s^{\sigma}_j$ and $s^{\sigma}_{i}$ on $Z$.

We now lift the sections $s^{\sigma}_j$ and $s^{\sigma}_{i}$ to global sections of  $\Ld^{\sigma}_j$ and $\Ld^{\sigma}_i$, which we still denote by $s^{\sigma}_j$ and $s^{\sigma}_{i}$. 
Indeed, for any $n \geqslant 1$, write $(\X/Z)_{n}$ for the (non-reduced) subscheme of $\X$ defined by the ideal $\mathscr{I}^n_Z$.  In the exact sequence 
$$ H^0((\X/Z)_{n}, \Ld^{\sigma}_j) \fl H^0((\X/Z)_{n-1}, \Ld^{\sigma}_j) \fl H^1(Z, (\nu^*_{Z/ \X})^{\otimes n}),$$ 
and in the analogous one for $\Ld^{\sigma}_i$,
the right-hand vanishes: the conormal bundle is a direct sum of line bundles on $Z$ which are nef by the hypothesis in \cref{intro:main thm Z} and so are its positive tensor powers, thus their first cohomology group vanishes by \cref{lemma global generated}. We thus extend the sections constructed above to all of the $(\X/Z)_n$ by induction, which yields an extension to $\mathfrak{X} = \varprojlim_n (\X/Z)_n$.

\begin{lem}
\label{lemma sections}Viewing the restrictions of $s^{\sigma}_j$ and $s^{\sigma}_{i}$ to $\mathfrak{X}_{\sigma}$ as functions through the isomorphism $(\Ld^{\sigma}_j)_{| \X_{\sigma}} \simeq \gO_{\X_{\sigma}}(-D_j)$ induced by \cref{equ:W_j} (and similarly for $i$), the $s^{\sigma}_j$ and $s^{\sigma}_{i}$ are equations for $D_{j}$ and $D_i$ on $\mathfrak{X}_{\sigma}$, and thus
$$ w_{\sigma} \coloneqq t \cdot \prod_{j \in J} (s_{j}^{\sigma})^{-1} \cdot \prod_{i \in L_\sigma} (s_{i}^{\sigma})^{-1}$$ is an invertible function on $\mathfrak{X}_{\sigma}$.
\end{lem}

\begin{proof}
We show that $s^{\sigma}_{i}$ is an equation for $D_{i}$ on $\mathfrak{X}_{\sigma}$, the proof is analogous for $s^{\sigma}_j$.
\\Let $\mathcal{U}$ be an open cover of $\X \setminus \big( \cup_{i' \notin J \cup L_\sigma} D_{i'} \big) $ such that ${D_i}_{|U}= \textrm{div}(g_U)$ for any $U \in \mathcal{U}$; this is possible as $D_{i}$ is a Cartier divisor. On $U$, ${W^{\sigma}_{i}}_{|U}=- {D_{i}}_{|U}=\textrm{div}(g_U^{-1})$ and
\begin{align*}
    \Ld^{\sigma}_i (\mathfrak{X}_\sigma \cap U) %= \mathcal{O}_Z({W^{\mathbf{i}}_{\alpha}}_{|Z})(Z)=\{f \in K_Z(Z) \,|\, \textrm{div}(f) + \textrm{div}(h) \geqslant 0 \} 
    & \xrightarrow{\simeq} \mathcal{O}_{\mathfrak{X}_\sigma} (\mathfrak{X}_\sigma \cap U) \\
    f & \mapsto fg_U^{-1} \\
    s^{\sigma}_{i}& \mapsto  s^{\sigma}_{i}g_U^{-1} \in \mathcal{O}_{\mathfrak{X}_\sigma}^{\times}(\mathfrak{X}_\sigma \cap U),
\end{align*}
where $s^{\sigma}_{i}g_U^{-1}$ is a regular invertible function on $\mathfrak{X}_\sigma \cap U$, as its reduction to $Z$ is invertible.
Finally, the section $s^{\sigma}_{i}$ is defined globally on $\widehat{\X_{/Z}}$ and on each open $\mathfrak{X}_\sigma \cap U$ gives a local equation of the divisor $D_{i}$, hence it is a equation for $D_{i}$ on $\mathfrak{X}_\sigma$.
\end{proof}

\subsection{Construction for two adjacent maximal cones} \label{section:two max cones}

Let $\sigma$ and $\sigma'$ be two maximal cones of $\Sigma$ intersecting along a face of codimension one, and let $C \subseteq Z$ be the curve associated with the cone $\sigma \cap \sigma'$. Setting $L_{\sigma \sigma'} = L_{\sigma} \cap L_{\sigma'}$, we may write $L_{\sigma} = L_{\sigma \sigma'} \cup \{ i_0 \}$ and $L_{\sigma'} = L_{\sigma \sigma'} \cup \{ i_{\infty} \}$.
%for two indices $i_0, i_{\infty} \in L$. 
%We write $C \subseteq Z$ the curve associated to the cone $\sigma \cap \sigma'$.
The sets $\mathcal{B}=((v_i)_{i \in L_{\sigma \sigma'}},v_{i_0}, (v_j)_{j \in J})$ and $\mathcal{B}'=((v_i)_{i \in L_{\sigma \sigma'}},v_{i_\infty}, (v_j)_{j \in J})$ are bases of $\hat{N} = N \oplus \Z^J$. They induce isomorphisms $\beta, \beta': \Z^r \oplus \Z^J \rightarrow \hat{N}$ such that 
the change of basis 
%$M_{\mathcal{B}', \mathcal{B}}$ 
from $\mathcal{B}$ to $\mathcal{B}'$ is 
%$M_{\mathcal{B}', \mathcal{B}}= \beta' \circ \beta^{-1}$.
\begin{equation*} \label{equ matrix change base}
M_{\mathcal{B}' \mathcal{B}}= \beta' \circ \beta^{-1}=
\begin{blockarray}{cccc}
L_{\sigma \sigma'} & i_0  & J\\
\begin{block}{(ccc)c}
  \Id  & (-C \cdot D_{i})_{i \in L_{\sigma \sigma'} } & 0 & L_{\sigma \sigma'}  \\
  0 & -1 & 0 & i_{\infty}\\
  0 & (-C\cdot D_j)_{j \in J} & \Id & J \\
\end{block}
\end{blockarray}
\quad \textrm{ and }
\begin{pmatrix}
v_i \\ v_{i_\infty} \\ v_j
\end{pmatrix} 
= M_{\mathcal{B}' \mathcal{B}}^{T}
\begin{pmatrix}
v_i \\ v_{i_0} \\ v_j
\end{pmatrix}.
\end{equation*}
Denote by $((\varepsilon_i)_{i \in L_{\sigma \sigma'}}, \varepsilon_{i_0}, (\varepsilon_j)_{j \in J})$ the basis of $\hat{M} \coloneqq \Hom(\hat{N}, \Z)$ dual to $\mathcal{B}$, and $((\varepsilon'_i)_{i \in L_{\sigma \sigma'}}, \varepsilon'_{i_\infty},  (\varepsilon'_j)_{j \in J})$ the basis dual to $\mathcal{B}'$. It follows that 
\begin{equation}\label{equ:dualbasis_two cones}
\begin{pmatrix}
\varepsilon'_i \\ \varepsilon'_{i_\infty} \\ \varepsilon'_j
\end{pmatrix} 
= M_{\mathcal{B}' \mathcal{B}}
\begin{pmatrix}
\varepsilon_i \\ \varepsilon_{i_0} \\ \varepsilon_j
\end{pmatrix}.
\end{equation}
%
%This means that if $u= \sum_{l \in L_{\sigma}} \alpha_l v_l = \sum_{k \in L_{\sigma'}} \alpha'_k v_k$ then the relation $\alpha' = M_{\mathcal{B}' \mathcal{B}} \alpha$ holds. In other words, we have, at the level of dual bases, $v^{\vee}_i = M_{\mathcal{B}'\mathcal{B}} v^{\vee}_i$ for $i \in L_{\sigma \sigma'}$ and $v^{\vee}_{i_{\infty}} = M_{\mathcal{B}'\mathcal{B}} v^{\vee}_{i_0}$.
%We denote by $M^{\vee}_{\mathcal{B}' \mathcal{B}}$ the dual endomorphism of $\widehat{M} \coloneqq \Hom(\widehat{N}, \Z)$.
\\The isomorphisms $\beta$ and $\beta'$ allow us to view 
%We write 
$$W^{\sigma} \coloneqq ((W^{\sigma}_i)_{i \in L_{\sigma \sigma'}}, W^{\sigma}_{i_0}, (W^{\sigma}_j)_{j \in J} ) \, \in \, (\Z^r \oplus \Z^J) \otimes \Div_0(\X) \simeq (\Div_0(\X))^{n+1}$$ 
%$W^{\sigma}$ 
and $W^{\sigma'}$ as elements of $\widehat{N} \otimes \Div_0(\X)$, that we will still denote by $W^{\sigma}$ and $W^{\sigma'}$ .

%We denote by $M^{\vee}_{\mathcal{B}' \mathcal{B}}$ the dual endomorphism of $\widehat{M} \coloneqq \Hom(\widehat{N}, \Z)$.
%We write 
%$$W^{\sigma} \coloneqq ((W^{\sigma}_i)_{i \in L_{\sigma}},  (W^{\sigma}_j)_{j \in J} ) \, \in \, (\Z^r \oplus \Z^J) \otimes \Div_0(\X) \simeq (\Div_0(\X))^{n+1}.$$ The isomorphisms $\beta$ and $\beta'$ allow us to view $W^{\sigma}$ as an element of $\widehat{N} \otimes \Div_0(\X)$, that we will still denote by $W^{\sigma}$.

\begin{lem} \label{lemma line bundles two max cones}
Let $C \subseteq Z$ be the curve associated with the cone $\sigma \cap \sigma'$.
We have
$$\begin{cases}
W^{\sigma'}_{i} = W^{\sigma}_{i}  - (C \cdot D_{i}) W^{\sigma}_{i_0} 
& \textrm{ \quad for } i \in L_{\sigma \sigma'} \\
W^{\sigma'}_{i_{\infty}}= - W^{\sigma}_{i_0}  & \\
W^{\sigma'}_{j} =  W^{\sigma}_{j} - (C \cdot D_j) W^{\sigma}_{i_0} 
&  \textrm{ \quad for } j \in J
\end{cases}$$ 
In other words, the relation $W^{\sigma'} =  (M_{\mathcal{B}' \mathcal{B}  } \otimes \Id) W^{\sigma}$ holds.
\end{lem}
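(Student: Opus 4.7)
My plan is to exhibit $W^{\sigma}$ and $W^{\sigma'}$ as the coordinate representations, in the bases $\mathcal{B}$ and $\mathcal{B}'$ respectively, of a single intrinsic element of $\hat{N} \otimes \Div_0(\X)$. Once this is done, all three identities and the compact form $W^{\sigma'} = (M_{\mathcal{B}'\mathcal{B}} \otimes \Id)\, W^{\sigma}$ follow at once from the change-of-basis formula implicit in \cref{equ fan}.

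First I would prove the following intrinsic identity in $\hat{N} \otimes \Div_0(\X)$, valid for every maximal cone $\sigma$ of $\Sigma$:
\[
\sum_{i \in L_\sigma} v_i \otimes W^\sigma_i \;+\; \sum_{j \in J} v_j \otimes W^\sigma_j \;=\; -\sum_{l \in L} v_l \otimes D_l \;-\; \sum_{j \in J} v_j \otimes D_j.
\]
Since $v_l = (u_l, (\lambda_{j,l})_j)$ and $v_j = (0, e_j)$, it suffices to check the identity componentwise in $N$ and in $\Z^J$. The $N$-component reads $\sum_{i \in L_\sigma} u_i \otimes W^\sigma_i = -\de$, which is exactly Cramer's rule applied to the decomposition of $\de$ in the basis $(u_i)_{i \in L_\sigma}$, i.e.\ the defining formula for $W^\sigma_i$. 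Extracting the $e_j$-coefficient of the $\Z^J$-component reduces the identity to $\sum_{i \in L_\sigma} \lambda_{j,i} W^\sigma_i + W^\sigma_j = -D_j - \sum_{l \in L} \lambda_{j,l} D_l$, which is precisely the defining equation \cref{equ:W_j} of $W^\sigma_j$.

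Since the right-hand side above does not depend on $\sigma$, the analogous equality for $\sigma'$ yields the same tensor. Writing the identity with $\sigma$ on the left and $\sigma'$ on the right, I would then use \cref{equ fan} applied to the $1$-dimensional toric stratum $C$ associated with $\sigma \cap \sigma'$: the facts that $(C \cdot D_{i_0}) = (C \cdot D_{i_\infty}) = 1$ and $(C \cdot D_l) = 0$ for $l \in L \setminus (L_\sigma \cup L_{\sigma'})$ allow one to solve for $v_{i_\infty}$ in the basis $\mathcal{B}$:
\[
v_{i_\infty} \;=\; -v_{i_0} - \sum_{i \in L_{\sigma\sigma'}} (C \cdot D_i)\, v_i - \sum_{j \in J} (C \cdot D_j)\, v_j.
\]
Plugging this into $\sum_{i \in L_{\sigma'}} v_i \otimes W^{\sigma'}_i + \sum_j v_j \otimes W^{\sigma'}_j$ and comparing coefficients along the basis $\mathcal{B}$ immediately gives $W^{\sigma'}_{i_\infty} = -W^{\sigma}_{i_0}$, together with $W^{\sigma'}_i = W^{\sigma}_i - (C \cdot D_i) W^{\sigma}_{i_0}$ for $i \in L_{\sigma\sigma'}$ and $W^{\sigma'}_j = W^{\sigma}_j - (C \cdot D_j) W^{\sigma}_{i_0}$ for $j \in J$. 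The matrix form of the statement is then a direct repackaging using the explicit description of $M_{\mathcal{B}'\mathcal{B}}$.

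There is no serious conceptual obstacle here: the proof is essentially bookkeeping once one notices that the tuple $(W^\sigma_\bullet)$ is naturally the coordinate vector, in the basis $\mathcal{B}$, of a canonical element of $\hat{N} \otimes \Div_0(\X)$ built directly from the $D_l$'s, the $D_j$'s, and the constants $\lambda_{j,l}$. This is the philosophical content of the lemma: the $W^\sigma$'s encode the same intrinsic combinatorial datum, merely re-expressed relative to each maximal cone of $\Sigma$.
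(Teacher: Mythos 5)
Your proposal is correct, and it takes a genuinely different route from the paper. The paper verifies the three identities one at a time by direct manipulation: it substitutes the expansion $u_{i_\infty} = -u_{i_0} - \sum_{m \in L_{\sigma\sigma'}}(C\cdot D_m)u_m$ into the determinant formula defining $W^{\sigma'}_i$, carefully tracks the signs, and then separately unwinds the defining equation for $W^{\sigma'}_j$ using \cref{equ fan}. You instead recognize that the tuple $(W^\sigma_\bullet)$ is precisely the coordinate vector, in the basis $\mathcal{B}$, of the $\sigma$-independent element $-\sum_{l\in L}v_l\otimes D_l - \sum_{j\in J}v_j\otimes D_j \in \hat{N}\otimes\Div_0(\X)$: the $N$-component of this identity is Cramer's rule applied to the definition of $W^\sigma_i$, and the $\Z^J$-components reproduce \cref{equ:W_j} verbatim. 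Once this is established for both $\sigma$ and $\sigma'$, a single substitution of the $\mathcal{B}$-expansion of $v_{i_\infty}$ (which is exactly \cref{equ fan} specialized to $C$, using $(C\cdot D_{i_0})=(C\cdot D_{i_\infty})=1$ and $(C\cdot D_l)=0$ for $l\notin L_\sigma\cup L_{\sigma'}$) and coefficient comparison yields all three identities at once, with no case distinction. This buys you a shorter argument that makes the role of $M_{\mathcal{B}'\mathcal{B}}$ transparent and explains conceptually why the $J$-case has the same shape as the $L_{\sigma\sigma'}$-case, at the modest cost of introducing the auxiliary intrinsic element up front. Both approaches are essentially elementary linear algebra; yours packages it more cleanly.
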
 

\begin{proof}
By \cref{equ relation fan} we have $u_{i_\infty}= -u_{i_0} - \sum_{m \in L_{\sigma \sigma'}}  (C \cdot D_{m})u_m$, so
%, for $i \in L_{\sigma \sigma'}$
\begin{align*}
\textrm{for $i \in L_{\sigma \sigma'}$, } \quad W^{\sigma'}_{i} 
& = - \frac{\det (\de , (u_{l})_{l \in L_{\sigma \sigma'} \setminus \{i\}}, u_{i_\infty})}{\det (u_i , (u_{l})_{l \in L_{\sigma \sigma'} \setminus \{i\}}, u_{i_\infty})}
%= - \frac{\det (\de , (u_{l})_{l \in L_{\sigma \sigma'} \setminus \{i\}}, -u_{i_0} - \sum_{m \in L_{\sigma \sigma'}}  (C \cdot D_{m}) u_{m})}{\det (u_i , (u_{l})_{l \in L_{\sigma \sigma'} \setminus \{i\}}, -u_{i_0} - \sum_{m \in L_{\sigma \sigma'}}  (C \cdot D_{m}) u_{m})} 
\\
& =  - \frac{\det (\de , (u_{l})_{l \in L_{\sigma \sigma'} \setminus \{i\}}, u_{i_0})}{\det (u_i , (u_{l})_{l \in L_{\sigma \sigma'} \setminus \{i\}}, u_{i_0})} 
- \sum_{m \in L_{\sigma \sigma'}} (C \cdot D_m) \frac{\det (\de , (u_{l})_{l \in L_{\sigma \sigma'} \setminus \{i\}}, u_{m})}{\det (u_i , (u_{l})_{l \in L_{\sigma \sigma'} \setminus \{i\}}, u_{i_0})} \\
& = W^\sigma_i - (C \cdot D_i) \frac{\det (\de , (u_{l})_{l \in L_{\sigma \sigma'} \setminus \{i\}}, u_{i})}{\det (u_i , (u_{l})_{l \in L_{\sigma \sigma'} \setminus \{i\}}, u_{i_0})}
=  W^\sigma_i - (C \cdot D_i) W^\sigma_{i_0};\\
\textrm{for $i= i_{\infty}$, } \quad W^{\sigma'}_{i_{\infty}} 
& = \frac{\det (\de, (u_l)_{l \in L_{\sigma \sigma'}})}{\det(u_{i_\infty}, (u_l)_{l \in L_{\sigma \sigma'}})}
= -\frac{\det (\de, (u_l)_{l \in L_{\sigma \sigma'}})}{\det(u_{i_0}, (u_l)_{l \in L_{\sigma \sigma'}})}=- W^{\sigma}_{i_0}.
\end{align*}
For $j \in J$
\begin{align*}
    \sum_{i \in L_{\sigma'}}  \lambda_{j,i} W^{\sigma'}_i 
    & =  - \lambda_{j,i_{\infty}} W^{\sigma}_{i_0} + \sum_{i \in L_{\sigma \sigma'}} \lambda_{j,i} \left(W^{\sigma}_{i} -(C\cdot D_{i}) W^{\sigma}_{i_0}\right) \\
    & =  \Big(- \lambda_{j,i_{\infty}} - \sum_{i \in L_{\sigma \sigma'}} \lambda_{j,i} (C\cdot D_{i}) - \lambda_{j,i_{0}}\Big) W^{\sigma}_{i_0} + \sum_{i \in L_{\sigma}} \lambda_{j,i} W^{\sigma}_{i}  \\
    & = (C \cdot D_j) W^{\sigma}_{i_0} + \sum_{i \in L_{\sigma}} \lambda_{i,j}  W^{\sigma}_{i} \hspace{30pt} \textrm{by \cref{equ fan}} \\
    W^{\sigma'}_j 
    & = - D_j - \sum_{l \in L} \lambda_{j,l} D_l - \sum_{i \in L_{\sigma'}} \lambda_{j,i} W^{\sigma'}_i \\
    & =  - D_j - \sum_{l \in L} \lambda_{j,l} D_l - \sum_{i \in L_{\sigma}} \lambda_{j,i}  W^{\sigma}_{i} -(C \cdot D_j) W^{\sigma}_{i_0} = W^{\sigma}_j -(C \cdot D_j) W^{\sigma}_{i_0}.
\end{align*}
These relations can be summed up as
$ \begin{pmatrix}
W^{\sigma'}_i \\ W^{\sigma'}_{i_\infty} \\ W^{\sigma'}_j
\end{pmatrix}
= M_{\mathcal{B}'\mathcal{B}} \begin{pmatrix}
W^{\sigma}_i \\ W^{\sigma}_{i_0} \\ W^{\sigma}_j
\end{pmatrix}$, i.e. $W^{\sigma'} =  (M_{\mathcal{B}' \mathcal{B}  } \otimes \Id) W^{\sigma}$.
\end{proof}

The inverse $(s^{\sigma}_{i_0})^{-1}$ is a section on $\mathfrak{X}$ of $\Ld^{\sigma'}_{i_\infty}$, so by \cref{lemma line bundles two max cones} the sections
$$\begin{cases}
s^{\sigma'}_{i} \coloneqq s^{\sigma}_{i}\cdot  (s^{\sigma}_{i_0})^{ -(C \cdot D_{i})} & \textrm{ \quad for } i \in L_{\sigma \sigma'} \\
s^{\sigma'}_{i_{\infty}} \coloneqq (s^{\sigma}_{i_0})^{-1} & \\
s^{\sigma'}_{j} \coloneqq s^{\sigma}_{j} \cdot (s^{\sigma}_{i_0} )^{- (C \cdot D_j)}  & \textrm{ \quad for } j \in J 
\end{cases}$$
are sections on $\mathfrak{X}$ of the line bundles $\Ld^{\sigma'}_i$ and $\Ld^{\sigma'}_j$. By \cref{lemma sections} these give equations for $D_i$ and $D_j$ on the open subscheme $\mathfrak{X}_{\sigma'},$ and on $\mathfrak{X}_{\sigma} \cap \mathfrak{X}_{\sigma'}$ we have
\begin{equation} \label{equ:sections_two cones}
\begin{pmatrix}
s^{\sigma'}_i \\s^{\sigma'}_{i_\infty} \\ s^{\sigma'}_j
\end{pmatrix} 
= M_{\mathcal{B}' \mathcal{B}}
\begin{pmatrix}
s^{\sigma}_i \\s^{\sigma}_{i_0} \\ s^{\sigma}_j
\end{pmatrix}
\end{equation}
%We write the sections in column vectors
%\begin{align*}
 %   & s^{\sigma} 
 %   \coloneqq ((s^{\sigma}_i)_{i \in L_{\sigma \sigma'}}, s^{\sigma}_{i_0}, (s^{\sigma}_j)_{j \in j}) \in H^0( \gO_{ \mathfrak{X}_{\sigma}}(W^{\sigma})) \\
 %   & s^{\sigma'} \coloneqq ((s^{\sigma'}_i)_{i \in L_{\sigma \sigma'}}, s^{\sigma}_{i_\infty}, (s^{\sigma'}_j)_{j \in J}) \in H^0( \gO_{ \mathfrak{X}_{\sigma'}}(W^{\sigma'}))
%\end{align*} and their relation on $\mathfrak{X}_{\sigma} \cap \mathfrak{X}_{\sigma'}$ as $s^{\sigma'} =  M_{\mathcal{B}'\mathcal{B}} s^{\sigma}$, 
where the additive notation on the matrix corresponds to the multiplicative notation on the sections. 
Moreover, on $\mathfrak{X}_{\sigma} \cap \mathfrak{X}_{\sigma'}$ we have 
\begin{align} \label{equ:wsigma}
\begin{split}
w_{\sigma'} 
& \coloneqq t \cdot \prod_{j \in J} (s_{j}^{\sigma'})^{-1} \cdot \prod_{i \in L_{\sigma'}} (s_{i}^{\sigma'})^{-1} = t \cdot 
\prod_{j \in J} (s^{\sigma}_{j})^{-1} (s^{\sigma}_{i_0} )^{ C \cdot D_j} 
\cdot \prod_{i \in L_{\sigma \sigma'}} ( s^{\sigma}_{i})^{-1}   (s^{\sigma}_{i_0})^{ C \cdot D_{i}} \cdot  (s^{\sigma}_{i_0}) \\
& = t \cdot 
\prod_{j \in J} (s^{\sigma}_{j})^{-1} \cdot
\prod_{i \in L_{\sigma \sigma'}} ( s^{\sigma}_{i})^{-1} \cdot
(s^{\sigma}_{i_0})^{ \sum_{j \in J} C \cdot D_j + \sum_{i \in L_{\sigma \sigma'}} C \cdot D_{i} + 1}  = w_{\sigma}, 
\end{split}
\end{align}
hence the invertible function $w_{\sigma}$ on $\mathfrak{X}_\sigma$ extends to $\mathfrak{X}_\sigma \cup \mathfrak{X}_{\sigma'}$ by $w_{\sigma'} $.

\subsection{Construction of the morphism}
\label{proof:morphism}
Let $\Gamma$ be the graph with vertices the maximal cones of $\Sigma$ (hence the maximal cones of $\widehat{\Sigma}$) and with an edge between $\sigma$ and $\sigma'$ if and only if $\sigma \cap \sigma'$ is a common face of codimension one.
Note that since $Z$ is proper, if $\mathbb{S} \subset N_{\R}$ is a sphere with center the origin, then $\Sigma \cap \mathbb{S}$ is a triangulation of $\mathbb{S}$. In particular, $\Gamma$ is the 1-skeleton of the dual complex of a triangulation of the sphere, and is thus connected.

Let $\sigma_0 \in \Sigma$ be a maximal cone, and $p_0 \in \Gamma$ the corresponding vertex, that we will use as a reference point. We fix a tuple of sections $s^{\sigma_0}$ of $W^{\sigma_0}$ as in \cref{section:one max cone}.
\\Let $\sigma \in \Sigma$ be a maximal cone, and $p \in \Gamma$ the corresponding vertex. By connectedness of $\Gamma$, there exists a path $\gamma$ from $p_0$ to $p$, hence a sequence of maximal cones $\sigma_0,\ldots, \sigma_q = \sigma$ such that $\sigma_h \cap \sigma_{h+1}$ is a codimension one face of both $\sigma_h$ and $\sigma_{h+1}$, for $h=0,\ldots, q-1$. The construction of \cref{section:two max cones} allows us to construct inductively along $\gamma$ a tuple of sections $s^{\sigma_h}$ of  $W^{\sigma_h}$. 
%We obtain this way a tuple of sections $s^{\sigma}$ of $W^{\sigma}$.
\begin{lem} \label{lem:indep on loop}
The tuple of sections $s^{\sigma}$ is independent on the choice of path. 
%Furthermore, the functions defined on each $\mathfrak{X}_{\sigma}$  by
%$$w_{\sigma} = t \cdot \prod_{j \in J} (s_{j}^{\sigma})^{-1} \cdot \prod_{i \in L_{\sigma}} (s_{i}^{\sigma})^{-1}$$
%glue to %yield 
%an invertible function $w$ on $\mathfrak{X}$.
\end{lem}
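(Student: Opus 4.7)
The plan is to reduce path-independence to a cocycle identity for the change-of-basis matrices $M_{\mathcal{B}'\mathcal{B}}$ introduced in \cref{section:two max cones}. Since $\Gamma$ is connected, any two paths between the same endpoints differ by a loop based at their common endpoint, so it is enough to show that along every loop $\sigma_0 \to \sigma_1 \to \cdots \to \sigma_q = \sigma_0$ in $\Gamma$, the inductive construction returns the starting tuple $s^{\sigma_0}$.

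The key observation is that \cref{equ:sections_two cones} exhibits each edge-transition $\sigma_h \to \sigma_{h+1}$ as the action of the integer matrix $M_h \coloneqq M_{\mathcal{B}_{h+1}\mathcal{B}_h}$ on the tuple of sections, in the additive-exponents / multiplicative-sections convention already used in \cref{section:two max cones}. Composing the transitions along the loop therefore replaces $s^{\sigma_0}$ by the matrix product $M_{q-1} \cdots M_1 M_0$ applied to it. Since the matrices $M_h$ are genuine change-of-basis matrices for the dual lattice $\hat M$ (cf.\ \cref{equ:dualbasis_two cones}), they satisfy the transitivity relation $M_{\mathcal{B}''\mathcal{B}'}\cdot M_{\mathcal{B}'\mathcal{B}} = M_{\mathcal{B}''\mathcal{B}}$; iterating this around the loop yields $M_{q-1}\cdots M_0 = \mathrm{Id}$, and hence $s^{\sigma_q} = s^{\sigma_0}$.

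A more conceptual reformulation I would adopt is to package the whole construction as a single group homomorphism $s\colon \hat M \to G$, where $G$ is the $\hat M$-graded multiplicative group of nowhere-vanishing sections of the line bundles attached to elements of $\hat M$, well-defined because of the invariance of the underlying divisor tuples $W^\sigma \in \hat N \otimes \mathrm{Div}_0(\X)$ established in \cref{lemma line bundles two max cones}. The transition rule of \cref{equ:sections_two cones} is then precisely the statement that $s$ respects addition on $\hat M$, so $s$ is determined by its values on the basis $\mathcal{B}_{\sigma_0}^\vee$, and the assignment $s^\sigma = (s(\varepsilon_v))_{v \in \mathcal{B}_\sigma}$ is automatically intrinsic to $\sigma$. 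The fact that the sections returned after traversing a loop land in the \emph{same} line bundles (as opposed to some twist) is then automatic from the analogous invariance of the $W^\sigma$ already at hand.

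The main subtlety will not be the linear algebra, which is formal, but the combinatorial bookkeeping: the reshuffling $i_0 \leftrightarrow i_\infty$ between rows and columns in \cref{equ:sections_two cones} must be reconciled with the direction in which matrices compose along the loop, so that successive $M_h$ genuinely multiply as change-of-basis maps on $\hat M$. Once the indexing conventions are aligned, the cocycle identity $M_{q-1}\cdots M_0 = \mathrm{Id}$ closes the argument.
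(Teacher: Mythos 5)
Your argument is correct and matches the paper's proof: both observe that each edge transition acts on the tuple of sections by the change-of-basis matrix $M_{\mathcal{B}_{h+1}\mathcal{B}_h}$ (per \cref{equ:sections_two cones}), and since change-of-basis matrices compose transitively, the composite depends only on the initial and terminal bases $\mathcal{B}_{\sigma_0}$ and $\mathcal{B}_\sigma$, not on the intermediate path. The cocycle/homomorphism language you add is a faithful reformulation of the same point, and your worry about the $i_0\leftrightarrow i_\infty$ index reshuffling is already resolved by the way \cref{equ:dualbasis_two cones} is set up.
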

\begin{proof}
By \cref{equ:sections_two cones}, for any $h=0,\ldots,q-1$, the sections $s^{\sigma_{h+1}}$ are constructed from $s^{\sigma_{h}}$ by multiplication by the matrix for the change of basis from $((v_i)_{i \in L_{\sigma_h}}, (v_j)_{j \in J})$ to $((v_i)_{i \in L_{\sigma_{h+1}}}, (v_j)_{j \in J})$. Thus, by composition, the sections $s^\sigma$ only depends on $s^{\sigma_0}$ and the change of basis from $((v_i)_{i \in L_{\sigma_0}}, (v_j)_{j \in J})$ to $((v_i)_{i \in L_{\sigma}}, (v_j)_{j \in J})$.
\end{proof}
This provides us with a tuple of sections $s^{\sigma} $ of $W^{\sigma}$ for each maximal cone $\sigma \in \Sigma$, and the function
$$w_{\sigma} = t \cdot \prod_{j \in J} (s_{j}^{\sigma})^{-1} \cdot \prod_{i \in L_{\sigma}} (s_{i}^{\sigma})^{-1} \, \in \gO(\mathfrak{X}_{\sigma})^{\times}.$$
%on each $\mathfrak{X}_{\sigma}$. 
By \cref{equ:wsigma} the $w_\sigma$ glue to an invertible function $w$ on $\mathfrak{X}$; $w$ admits a $(n+1)$-th root on $Z$, since it is constant, and by Hensel's lemma we obtain an invertible function $w'$ on $\mathfrak{X}$ such that $(w')^{n+1} =w$.
We use the sections $s^\sigma$ and the function $w'$ to define a morphism $$f_{\sigma} : \mathfrak{X}_{\sigma} \fl \mathfrak{N}_{\sigma}$$ as follows. Denoting by $((\varepsilon_i)_{i \in L_\sigma}, (\varepsilon_j)_{j \in J})$ the dual basis to $((v_i)_{i \in L_{\sigma}}, (v_j)_{j \in J})$, the toric chart $\mathfrak{N}_{\sigma}$ has the following explicit description:
$$ \mathfrak{N}_{\sigma} = \Spf R \{\chi^{\varepsilon_i}, i \in L_{\sigma}\}[[ \chi^{\varepsilon_j}, j \in J ]]/ \{ t - \chi^{\sum_{i \in L_{\sigma}} \varepsilon_i +\sum_{j \in J} \varepsilon_j} \}.$$
Indeed, $\mathfrak{N}_{\sigma}$ is the formal completion along $Z$ of 
%the affine toric chart 
$\mathcal{N}_{\hat{\sigma}} \times_{\mathbb{A}^1} R$, where $\mathcal{N}_{\hat{\sigma}}=\Spec k [(\hat{\sigma})^{\vee}\cap \hat{M}] $; %corresponding to the maximal cone $\widehat{\sigma}$. This chart is given by $\Spec k [(\widehat{\sigma})^{\vee}\cap \widehat{N}]$, and 
since $\ord(t)=  \sum_{i \in L_{\sigma}} \varepsilon_i +\sum_{j \in J} \varepsilon_j$ on $\widehat{\sigma}$, the relation $t = \chi^{\sum_{i \in L_{\sigma}} \varepsilon_i +\sum_{j \in J} \varepsilon_j}$ holds.
%\\Moreover, the invertible function $w$ admits a $(n+1)$-th root on $Z$ (since it is constant), and by Hensel's lemma we obtain an invertible function $w'$ on $\mathfrak{X}_{\sigma}$ such that $(w')^{n+1} =w$.
\\The map $f_{\sigma}$ is now defined at the level of function rings by 
\begin{align*}
\label{equ:morphism one max cone}
f^{\#}_{\sigma}:  \gO(\mathfrak{N}_{\sigma}) 
& \fl \gO(\mathfrak{X}_{\sigma}) \\
\chi^{\varepsilon_i}  
& \mapsto w' s^{\sigma}_{i} \; \textrm{ \quad for } i \in L_{\sigma} \\
\chi^{\varepsilon_j} 
& \mapsto w' s_{j}^{\sigma} \; \textrm{ \quad for } j \in J
\end{align*}
where the sections $s^{\sigma}$ are viewed as functions on $\mathfrak{X}_{\sigma}$ as in \cref{lemma sections}.

\begin{lem}
For any pair of maximal cones $\sigma, \sigma'$ intersecting along a codimension one face, the morphisms $f_{\sigma}$ and $f_{\sigma'}$ coincide on the overlap $\mathfrak{X}_{\sigma} \cap \mathfrak{X}_{\sigma'}$.
\end{lem}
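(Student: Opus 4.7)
The plan is to verify that, when restricted to the overlap $\mathfrak{X}_\sigma \cap \mathfrak{X}_{\sigma'}$, the two ring homomorphisms $f_\sigma^\#$ and $f_{\sigma'}^\#$ land in the same subring of the overlap target and agree on a chosen set of generators.

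First I would check that each morphism factors through the overlap $\mathfrak{N}_\sigma \cap \mathfrak{N}_{\sigma'} = \mathfrak{N}_{\hat{\sigma} \cap \hat{\sigma}'}$. On $\mathfrak{X}_\sigma \cap \mathfrak{X}_{\sigma'}$ we have removed every $D_l$ with $l \notin L_{\sigma\sigma'}$, in particular $D_{i_0}$ and $D_{i_\infty}$. By \cref{lemma sections} this means $s^\sigma_{i_0}$ is an invertible regular function on $\mathfrak{X}_\sigma \cap \mathfrak{X}_{\sigma'}$, so $f_\sigma^\#(\chi^{\varepsilon_{i_0}}) = w' s^\sigma_{i_0}$ is a unit; hence $f_\sigma^\#$ extends uniquely to the localization inverting $\chi^{\varepsilon_{i_0}}$, which is $\gO(\mathfrak{N}_\sigma \cap \mathfrak{N}_{\sigma'})$. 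The symmetric statement for $f_{\sigma'}^\#$ follows from the invertibility of $s^{\sigma'}_{i_\infty} = (s^\sigma_{i_0})^{-1}$ on the same overlap.

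It then suffices to check equality on a generating set of $\gO(\mathfrak{N}_\sigma \cap \mathfrak{N}_{\sigma'})$. I would take the generators $\chi^{\varepsilon'_\bullet}$ for $\bullet \in L_{\sigma'} \cup J$: applying $f_{\sigma'}^\#$ directly gives $w' s^{\sigma'}_\bullet$, while for $f_\sigma^\#$ one first uses the dual-basis identity (\ref{equ:dualbasis_two cones}) to write $\chi^{\varepsilon'_\bullet}$ multiplicatively in terms of the $\chi^{\varepsilon_{\bullet'}}$'s (allowing $(\chi^{\varepsilon_{i_0}})^{-1}$), and only then applies $f_\sigma^\#$ to each factor using its definition.

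The heart of the proof is a case-by-case computation for $\bullet \in L_{\sigma\sigma'}$, $\bullet = i_\infty$, and $\bullet \in J$, and I expect this bookkeeping to be the main obstacle. The crucial structural input is that the transformation matrix $M_{\mathcal{B}'\mathcal{B}}$ governs both the change of dual basis (\ref{equ:dualbasis_two cones}) and the construction of the sections (\ref{equ:sections_two cones}); consequently, the powers of $s^\sigma_{i_0}$ produced by the character transformation match exactly the powers needed to pass from $s^\sigma_\bullet$ to $s^{\sigma'}_\bullet$ via the section transformation, so those factors cancel cleanly. The remaining task is to check that the combined powers of $w'$ coming from the several $\chi^{\varepsilon_\bullet}$'s on the $f_\sigma$-side reduce to the single $w'$ on the $f_{\sigma'}$-side; the key ingredient here is the equality $w_\sigma = w_{\sigma'}$ established in (\ref{equ:wsigma}), which ensures that $w'$ is globally well-defined on $\mathfrak{X}$ (via the Hensel-lifted $(n+1)$-th root) and not merely chart-dependent. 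Once this compatibility is verified, the morphisms agree on all chosen generators, hence on the whole overlap.
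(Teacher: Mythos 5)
Your plan and the central structural observation — that $M_{\mathcal{B}'\mathcal{B}}$ simultaneously governs the change of dual basis in \cref{equ:dualbasis_two cones} and the change of sections in \cref{equ:sections_two cones} — coincide with the paper's (very terse) argument. The concern you raise about the powers of $w'$, however, is genuine, and the mechanism you cite does not close it. Carrying out the computation with \cref{equ:sections_two cones} as stated, for $a \in L_{\sigma'} \cup J$ one gets
\begin{equation*}
f_\sigma^{\#}(\chi^{\varepsilon'_a}) \;=\; \prod_b \big(w' s^\sigma_b\big)^{(M_{\mathcal{B}'\mathcal{B}})_{ab}} \;=\; (w')^{\sum_b (M_{\mathcal{B}'\mathcal{B}})_{ab}}\; s^{\sigma'}_a,
\end{equation*}
whereas $f_{\sigma'}^{\#}(\chi^{\varepsilon'_a}) = w'\, s^{\sigma'}_a$. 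For these to agree one would need every row of $M_{\mathcal{B}'\mathcal{B}}$ to sum to $1$; but the row sums are $1-(C\cdot D_i)$ for $i \in L_{\sigma\sigma'}$, $-1$ for the $i_\infty$-row, and $1-(C\cdot D_j)$ for $j \in J$, which are not $1$ in general. The identity $w_\sigma = w_{\sigma'}$ only guarantees that $w'$ is chart-independent; it does nothing to make the residual factor $(w')^{\sum_b (M_{\mathcal{B}'\mathcal{B}})_{ab}-1}$ trivial.

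What does rescue the argument is that the \emph{columns} of $M_{\mathcal{B}'\mathcal{B}}$ sum to $1$ — a scalar restatement of \cref{equ fan} together with $(C\cdot D_{i_0})=(C\cdot D_{i_\infty})=1$. This means that the multiplicative action of $M_{\mathcal{B}'\mathcal{B}}$ on the tuple of \emph{normalized} local equations $(w's^\sigma_\bullet)_\bullet$, whose product is $t$, preserves that normalization. Therefore the transformation rule for the sections forced by the gluing must read $w' s^{\sigma'}_a = \prod_b (w' s^\sigma_b)^{(M_{\mathcal{B}'\mathcal{B}})_{ab}}$, i.e. \cref{equ:sections_two cones} should carry the extra factor $(w')^{\sum_b (M_{\mathcal{B}'\mathcal{B}})_{ab}-1}$ on the right-hand side (equivalently, take $M_{\mathcal{B}'\mathcal{B}}$ to act on $w's^\sigma_\bullet$ rather than on $s^\sigma_\bullet$; the rescaled sections are still admissible lifts since $w'$ is a unit reducing to a constant on $Z$). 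With that correction your bookkeeping goes through cleanly and the lemma holds; as written, the proposed appeal to $w_\sigma=w_{\sigma'}$ does not fill the gap you correctly identified.
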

\begin{proof}
The cones $\sigma$ and $\sigma'$ correspond to adjacent vertices in $\Gamma$. Thus, by \cref{lem:indep on loop} we construct $s^\sigma$ from any path joining $\sigma_0$ to $\sigma$, and $s^{\sigma'}$ from $s^\sigma$ by the relation $s^{\sigma'}=M_{\mathcal{B'} \mathcal{B}}\, s^\sigma$
%on $\gO(\mathfrak{X}_{\sigma} \cap \mathfrak{X}_{\sigma'})$ 
in \cref{equ:sections_two cones}.

The functions $\chi^\varepsilon$ transform into $\chi^{\varepsilon'}$ via the change of dual bases, which is given by $\varepsilon'= M_{\mathcal{B'} \mathcal{B}}\, \varepsilon$ in \cref{equ:dualbasis_two cones}. 
Comparing the two formulas, it follows that $f_{\sigma} = f_{\sigma'}$ on $\mathfrak{X}_{\sigma} \cap \mathfrak{X}_{\sigma'}$.
\end{proof}

\begin{prop}
The morphism of formal $R$-schemes $f : \widehat{\X_{/Z}} \fl \widehat{\mathscr{N}_{/Z}}$ obtained by gluing the morphisms $f_{\sigma}$ is an isomorphism.
\end{prop}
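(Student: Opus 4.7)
I would argue locally, showing that each chart morphism $f_\sigma : \mathfrak{X}_\sigma \to \mathfrak{N}_\sigma$ is an isomorphism, whence $f$ is one by gluing. First, $f_\sigma$ preserves the ideal of $Z$: $f^\#_\sigma(\chi^{\varepsilon_j}) = w' s^\sigma_j \in \mathscr{I}_Z$, since $s^\sigma_j$ is a local equation for $D_j \supset Z$ by \cref{lemma sections}. Both structure sheaves being $\mathscr{I}_Z$-adically complete by construction of the formal completion, a standard Nakayama-type induction reduces the problem to showing that $f_\sigma$ induces an isomorphism on the associated graded ring $\bigoplus_n \mathscr{I}_Z^n / \mathscr{I}_Z^{n+1}$.

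Next, I would identify both associated graded rings with the same symmetric algebra. Because $(\X, \X_k)$ is snc along $Z$ and $Z = \bigcap_{j \in J} D_j$ is a complete intersection of Cartier divisors, $Z$ is regularly embedded in $\X$ with conormal bundle $\nu^*_{Z/\X} = \bigoplus_{j \in J} \gO_Z(-F_j)$, giving $\mathrm{gr}(\gO_\mathfrak{X}) \simeq \mathrm{Sym}_{\gO_Z}(\nu^*_{Z/\X})$. Similarly, $Z$ is the zero section of the vector bundle $\mathcal{N} = \nu_{Z/\X}$, so $\mathrm{gr}(\gO_\mathfrak{N}) \simeq \mathrm{Sym}_{\gO_Z}(\mathcal{N}^*) = \mathrm{Sym}_{\gO_Z}(\nu^*_{Z/\X})$. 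Since any graded ring morphism between symmetric algebras is determined by its components in degrees $0$ and $1$, it suffices to verify that $f_\sigma$ is an iso on these two graded pieces.

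On degree $0$, the induced map $\bar f_\sigma : Z_\sigma \to Z_\sigma$ sends each toric coordinate $\chi^{\varepsilon_i}|_{Z_\sigma}$ to $w'|_Z \cdot s^\sigma_i|_{Z_\sigma}$. Because $Z$ is proper and connected, $w'|_Z \in k^*$; and since $s^\sigma_i|_Z$ is a scalar trivialization of the line bundle $\Ld^\sigma_i|_Z$ whose restriction to the smooth affine toric chart $Z_\sigma$ is a local equation of the toric divisor $Z_i \cap Z_\sigma$, it must be a nonzero scalar multiple of the toric monomial $\chi^{\varepsilon_i}|_{Z_\sigma}$ (units on $Z_\sigma$ being constants). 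Thus $\bar f_\sigma$ is a toric scaling automorphism. On degree $1$, the classes $\overline{\chi^{\varepsilon_j}}$ and $\overline{s^\sigma_j}$ are local generators of the same summand $\gO_Z(-F_j) \subset \nu^*_{Z/\X}$ (being classes of local equations of $E_j \subset \mathcal{N}$ and $D_j \subset \X$, which by the very construction of $\mathcal{N}$ as the normal bundle yield the same conormal line bundle), so $f_\sigma$ acts on each summand as multiplication by the unit $w'|_Z$, hence an iso on $\mathrm{gr}^1$.

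The main technical subtlety lies in verifying that the direct sum decompositions of $\nu^*_{Z/\X}$ arising from the $D_j$'s on the $\X$ side and from the $E_j$'s on the $\mathcal{N}$ side are compatible under the canonical identification $\nu_{Z/\X} = \mathcal{N}$; this ensures that the images $\overline{w' s^\sigma_j}$ really land in the correct summand of $\nu^*_{Z/\X}$ rather than mixing summands, and is where the construction of $\mathcal{N}$ as the normal bundle is essential. Once these compatibilities are established, the iso on associated graded combined with $\mathscr{I}_Z$-adic completeness yields that each $f_\sigma$, and hence $f$, is an isomorphism.
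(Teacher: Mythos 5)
Your proposal is correct in outline and takes a genuinely different route from the paper. The paper does not go through the associated graded ring at all: it observes that source and target are integral formal schemes of the same dimension, so it suffices to show $f$ is a closed immersion, and then deduces this from EGA III.1, 4.8.10 once one knows (a) $f^*\mathscr{J}$ generates $\mathscr{I}_Z$ (because the $\chi^{\varepsilon_j}$ map to $w's^\sigma_j$, units times the local generators $s^\sigma_j$ of $\mathscr{I}_Z$) and (b) $f$ restricts to an isomorphism on $Z$. This outsources the heavy lifting — precisely the associated-graded comparison you carry out by hand — to EGA, and the dimension count replaces your degree-by-degree analysis. Both arguments rest on the same two inputs: control of the ideal of definition and an iso on the reduction $Z$; your approach is more self-contained but requires more bookkeeping.

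The one place you should tighten is the degree-one piece, which you flag as ``the main technical subtlety'' but do not fully close. It is in fact not deep: on the chart $Z_\sigma$, both $\overline{\chi^{\varepsilon_j}}$ and $\overline{s^\sigma_j}$ are local trivialisations of the $j$-th summand $\gO_Z(-F_j)$ of the conormal sheaf (the former being the class of a local equation of $E_j$, the latter of $D_j$, and the identification $\nu^*_{Z/\cN} = \cN^* = \nu^*_{Z/\X}$ matches the $j$-th summands tautologically since $\cN = \bigoplus_j\gO_Z(F_j)$ is defined as the normal bundle). So $\overline{\chi^{\varepsilon_j}}$ and $\overline{s^\sigma_j}$ differ by a unit on $Z_\sigma$, which is a nonzero constant (units on an affine toric chart of a smooth proper toric variety are constants, by the same argument you use in degree zero). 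Hence $\mathrm{gr}^1(f^\#_\sigma)$ is a direct sum of multiplications by nonzero constants — an isomorphism — and your argument closes. A small additional remark worth making explicit: the reduction of completeness to an isomorphism on $\mathrm{gr}$ requires that the $\mathscr{I}_Z$-adic filtration on the target equals the filtration pulled back from $\mathscr{J}$, i.e.\ that $f^\#_\sigma(\mathscr{J})$ \emph{generates} $\mathscr{I}_Z$ (not merely lies inside it); this holds because $w'$ is a unit, but it deserves a sentence.
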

\begin{proof}
We follow the argument in \cite[Proposition 5.4]{NXY}.
\\Since the source and the target have same dimension and are integral, it is enough to check that $f$ is a closed immersion. 
\\If $\mathscr{J}$ is the largest ideal of definition of $\widehat{\mathscr{N}_{/Z}}$, i.e. the defining ideal of $Z \subset \mathscr{N}$, then $f^* \mathscr{J} = \mathscr{I}_{Z}$ is the largest ideal of definition of $\widehat{\X_{/Z}}$. Indeed, since $Z$ is cut out inside $\X$ by the $D_j$ for $j \in J$, the ideal $\mathscr{I}_Z$ is locally generated by the $s_j$ for $j \in J$; the same reasoning shows that $\mathscr{J}$ is locally generated by the $\chi^{\eps_j}$. The equality $f^*\mathscr{J} = \mathscr{I}_Z$ now follows directly from the local definition of $f$.
\\We 
%may now 
use \cite[4.8.10]{EGA3.1} and the fact that $f$ induces an isomorphism on the reductions to infer that $f$ is a closed immersion, and thus an isomorphism by equality of dimensions. 
\end{proof}

This concludes the proof of \cref{intro:main thm Z}: $\X$ is toric along $Z$.

\subsection{Integral affine structures}

The case in \cref{intro:main thm Z} where $Z$ is an irreducible component $D$ of $\X_k$ is particularly relevant for proving \cref{intro:main quintic}, while the case  where $Z$ is a stratum curve in $\cX_k$ was treated in \cite{NXY}. In these two cases, we give an explicit description of the $\Z$-affine structure on $\Star(\tau_Z)$. 

%The case where $Z$ is an irreducible component $D$ of $\X_k$ is particularly relevant for proving \cref{intro:main quintic}. Under the assumptions of \cref{intro:main thm Z}, we proved that $\X$ is toric along $D$, and $\rho_\X$ is an affinoid torus fibration over $\Star(v_D)$ by \cref{intro:cor}. Moreover, we have the following explicit description of the $\Z$-affine structure on $\Star(v_D)$ induced by $\rho_\X$ - note that it only depends on $D$ and not on how $D$ sits inside $\X$.

\subsubsection{Toric irreducible components}

\begin{cor} \label{cor fan structure}
In the setting of \cref{intro:main thm Z}, let $Z=D$ be an irreducible component of $\X_k$. Then there is a natural $\Z$-linear embedding of $\Star(v_D)$ inside the fan $\Sigma_D$ of $D$ which sends the polyhedral decomposition of $\Star(v_D)$ to the cone decomposition of $\Sigma_D$.
\end{cor}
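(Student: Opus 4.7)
The plan is to use \cref{intro:main thm Z} to replace $\widehat{\X_{/D}}$ by the formal completion of the toric $R$-scheme $\mathscr{N}$ along its zero section, and then read off the embedding directly from the geometry of the fan $\hat\Sigma$ of $\mathscr{N}$. Since the $\Z$-affine structure on $\Star(v_D)$ induced by $\rho_\X$ depends only on the formal completion $\widehat{\X_{/D}}$, as recalled in \cref{subsec:Berkovich retractions}, after this reduction I may assume $\X = \mathscr{N}$ is itself a toric model of its generic fiber and that $\rho_\X$ coincides with $\mathrm{val}_{\lvert \widehat{\X}_\eta}$ as in \cref{prop: toric retraction}.

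In this toric setup $J=\{0\}$, and the coefficients introduced in \cref{proof:divisors} satisfy $\lambda_{0,l}=1$ for every $l \in L$. Consequently the fan $\hat\Sigma \subset \hat N_\R = N_\R \oplus \R$ has rays generated by $v_0=(0,1)$ (corresponding to $D$) and $v_l=(u_l,1)$ for $l \in L$ (corresponding to the neighbouring components $D_l$), where the $u_l$ are the primitive ray generators of $\Sigma_D$. By \cref{prop: toric retraction}, the skeleton $\Sk(\X)$ is identified with $\hat\Sigma \cap (N_\R \times \{1\})$, with its integral affine structure induced by the ambient integral lattice $\hat N$ of $\hat N_\R$.

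I would then consider the $\Z$-linear projection
\[
\pi : \hat N_\R = N_\R \oplus \R \longrightarrow N_\R, \qquad (x,s) \longmapsto x,
\]
whose restriction to the affine hyperplane $N_\R \times \{1\}$ is a $\Z$-affine bijection onto $N_\R$ sending $v_D=(0,1)$ to the origin. Composing with the inclusion $\Star(v_D) \hookrightarrow N_\R \times \{1\}$ produces a $\Z$-affine embedding $\iota : \Star(v_D) \hookrightarrow N_\R$ whose image lies in $\lvert \Sigma_D \rvert$: indeed, a maximal face of $\Star(v_D)$ corresponds to a $0$-dimensional stratum $Y=D \cap D_{l_1} \cap \cdots \cap D_{l_n}$ of $\X_k$, and $\iota$ maps it to the simplex with vertices $0, u_{l_1}, \ldots, u_{l_n}$, sitting inside the maximal cone $\mathrm{Cone}(u_{l_1}, \ldots, u_{l_n})$ of $\Sigma_D$.

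The remaining verification is purely combinatorial: the bijection between faces of $\Star(v_D)$ and cones of $\Sigma_D$ reflects the bijection between strata of $D$ and torus orbits of the toric variety $D$, which in turn comes from $\mathring D \subset D$ being a torus embedding with toric boundary $\sum_l Z_l = \sum_l (D \cap D_l)$. Hence $\iota$ transports the polyhedral decomposition of $\Star(v_D)$ onto the cone decomposition of $\Sigma_D$, with each open cell landing in the relative interior of its associated cone and $v_D$ mapping to the apex. I do not foresee any genuine obstacle beyond this bookkeeping once \cref{intro:main thm Z} places us in a toric model; the subtlest point is merely keeping track of the identification $\Sk(\X) \subset \hat N_\R$ and the induced integral affine structure, which is already packaged by \cref{prop: toric retraction}.
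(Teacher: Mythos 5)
Your proposal is correct and follows essentially the same route as the paper: reduce via Theorem~B to the toric model $\mathscr{N}$, invoke Proposition~\ref{prop: toric retraction} to identify $\rho_\X$ with $\mathrm{val}$ over $\Star(v_D)$, recognize $\Star(v_D)$ as the interior of the height-$1$ slice $\Sigma_1$ of the fan $\hat\Sigma \subset N_\R \times \R$ of $\mathcal{N}$ (where $J=\{0\}$ forces $\lambda_{0,l}=1$, so the rays are $(0,1)$ and $(u_l,1)$), and project to $N_\R$ to land inside $\Sigma_D$ with $v_D$ going to the origin. The paper phrases this via the commutative square relating $\rho_\X$ on $\mathfrak{X}_D$ to $\mathrm{val}$ on $\mathfrak{N}_D$, but the underlying identification of $\Star(v_D)$ with $\mathrm{Int}(\Sigma_1) \hookrightarrow N_\R \simeq \Sigma_D$ and the bookkeeping matching strata of $\Delta_D$ to cones of $\Sigma_D$ are exactly what you wrote out.
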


\begin{proof}
By the proof of \cref{intro:main thm Z} and \cref{prop: toric retraction} we have the following diagram:

$$\begin{tikzpicture}
\matrix(m)[matrix of math nodes, row sep=2.5em,
    column sep=2em, text height=1.5ex, text depth=0.25ex]
{\mathfrak{X}_D & \mathfrak{N}_D  \\
    \Star(v_D)  & \mathrm{Int}(\Sigma_1) \\ };
	\path 
	(m-1-1) edge[->] node[auto] {$\simeq$} (m-1-2)
			edge[->] node[auto] {$ {\rho_\X} $} (m-2-1)
         (m-1-2) edge[->] node[auto] {$\val$} (m-2-2)
         (m-2-1) edge[->] node[auto] {$\simeq$} (m-2-2)
         (m-2-2) edge[-] node[auto] {$\varphi$} (m-2-1);
\end{tikzpicture}$$
where the upper arrow is an isomorphism of analytic spaces, and the lower one a homeomorphism. Here $\mathfrak{X}_D$ and $\mathfrak{N}_D$ are the generic fibers (in the sense of Berkovich) of the formal completions $\widehat{\X_{/D}}$ and $\widehat{\mathscr{N}_{/D}}$ respectively, and $\mathrm{Int}(\Sigma_1)$ denotes the interior of the polyhedral complex $\Sigma_1$ obtained by intersecting the fan $\hat{\Sigma} \subset N_\R \times \R$ of the normal bundle of $D$ in $\X$ with $N_{\R} \times \{1\}$. In particular, $\mathrm{Int}(\Sigma_1)$ is embedded in $\Sigma_D \simeq N_\R \simeq \R^n$, the polyhedral decomposition of $\Star(v_D)$ is the same as $\Sigma_D$, and the vertex $v_D$ corresponds to the origin. By \cref{subsec:defn affine structure}, the integral affine structure on $\Star(v_D)$ is the pullback via $\varphi$ of the integral affine structure on $\Sigma_D$, and this concludes the proof.
\end{proof}

\subsubsection{Stratum curve}\label{sec:curve}

Let $X/K$ be a smooth $n$-dimensional projective maximally degenerate Calabi--Yau variety, and 
let $\cX/R$ be a good minimal dlt model of $X$, with reduced special fiber $\X_k = \sum_{i \in I} D_i$. We consider a one-dimensional stratum $C = D_1 \cap \ldots \cap D_n$ of $\X_k$, which is therefore a smooth rational curve, and is such that $(\X, \X_k)$ is an snc pair in a formal neighbourhood of $C$ by \cite[Corollary 4.6]{NXY}. Since $(C, \De_C)$ is log Calabi--Yau, we may write its boundary  as $ \De_C = p_0 + p_{\infty}$, where $p_0= C \cap D_0$ and $p_\infty = C \cap D_\infty$ for two irreducible components $D_0, D_{\infty}$ of $\X_k$ meeting $C$ transversally.  

Following \cite{NXY}, we write $b_i = - (C \cdot D_i)$ for $i=1,\ldots,n$; from $C \cdot \X_k = 0$ we infer $\sum_{i=1}^n b_i =2$. The $\Star(\tau_C)$ consists on the union of two maximal faces corresponding to the zero-dimensional strata $p_0, p_{\infty}$, meeting along $\tau_C$. 
%The goal of this section is to describe the integral affine structure on $\Star(\tau_C)$ in terms of the intersection numbers $b_i$'s, with no assumption on their positivity. 

\begin{prop} \label{prop:integral_structure_n}
Let $\rho_{\X}$ be the retraction associated with the model $\X$, and endow $\emph{Sk}(X)$ with the $\Z$-affine structure induced by $\rho_{\X}$ away from the codimension 1 faces of $\Sk(\X)$.
Then $\Star(\tau_C)$ is $\Z$-affine isomorphic to the union of the simplices $\langle v_0,v_1,\ldots,v_n\rangle $ and $\langle v_1,\ldots,v_n, v_\infty\rangle $ in $\R^n$ where

\centering $v_0 = (1,0,\ldots,0)$, $v_1 = (0,1,\ldots,0)$,\ldots, $v_n = 0$ and $v_{\infty} = (-1, b_1,\ldots, b_{n-1})$. 
\end{prop}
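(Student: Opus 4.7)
\bigskip

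The strategy is to recognize that the Berkovich retraction around $\tau_C$ is a purely toric object and to read off the affine coordinates of the vertices $v_0,\ldots,v_n,v_\infty$ directly from the associated fan. Since $C$ is a one-dimensional rational stratum, \cite[Proposition 5.4]{NXY} guarantees, without any positivity assumption on the $b_i$'s, that $\widehat{\X_{/C}}$ is isomorphic to the formal completion of a toric $R$-scheme $\cZ$ along one of its strata. Denote by $\widehat{\Sigma}\subset\widehat{N}_{\R}$ the corresponding fan, with primitive ray generators $u_0,u_1,\ldots,u_n,u_\infty$ corresponding respectively to $D_0,D_1,\ldots,D_n,D_\infty$; its two maximal $(n+1)$-dimensional cones are $\sigma_0=\langle u_0,u_1,\ldots,u_n\rangle$ (corresponding to $p_0$) and $\sigma_\infty=\langle u_1,\ldots,u_n,u_\infty\rangle$ (corresponding to $p_\infty$), meeting along the codimension-one face $\langle u_1,\ldots,u_n\rangle$ (corresponding to $C$). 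Since $\X_k$ is reduced, $\ord(t)(u_i)=1$ for every $i$, so all the $u_i$ lie in the affine hyperplane $H:=\ord(t)^{-1}(1)$.

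Next, I would invoke \cref{prop: toric retraction} (via the above toric identification of formal completions) to identify the restriction of $\rho_\X$ to $\rho_\X^{-1}(\Star(\tau_C))$ with the tropicalization map $\val$, taking values in $|\widehat{\Sigma}|\cap H$. Under the natural homeomorphism $\Star(\tau_C)\simeq|\widehat{\Sigma}|\cap H$, the integral affine structure induced by $\rho_\X$ on $\Star(\tau_C)$ is the restriction of the standard $\Z$-affine structure on $H$, and the open maximal faces $\tau_{p_0},\tau_{p_\infty}$ become the interiors of $\sigma_0\cap H$ and $\sigma_\infty\cap H$. Because $(\X,\X_k)$ is snc at $p_0$, the cone $\sigma_0$ is regular, hence $(u_0,u_1,\ldots,u_n)$ is a $\Z$-basis of $\widehat{N}$; I would then fix the unique $\Z$-affine isomorphism $H\simeq\R^n$ sending $u_n$ to the origin and $u_i$ to $e_{i+1}$ for $i=0,\ldots,n-1$, which identifies $v_i:=u_i$ with the claimed values for $i=0,\ldots,n$.

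To locate $v_\infty$ I would apply the fan relation \cref{equ relation fan} for the curve $C$: using $(C\cdot D_0)=(C\cdot D_\infty)=1$ and $(C\cdot D_i)=-b_i$, one obtains $u_0-\sum_{i=1}^n b_i u_i+u_\infty=0$, i.e.\ $u_\infty=-u_0+\sum_{i=1}^n b_i u_i$. Under the chosen coordinates on $H$, with $v_n=0$ and $\sum_{i=1}^n b_i=2$ ensuring consistency with $u_\infty\in H$, this becomes
\[
v_\infty=-v_0+\sum_{i=1}^{n-1}b_i v_i=(-1,b_1,\ldots,b_{n-1}),
\]
completing the identification. The main obstacle is the initial reduction: since no positivity is assumed on the $b_i$'s, neither \cref{intro:main thm Z} nor \cref{intro:cor} applies directly, and one must instead appeal to the more delicate \cite[Proposition 5.4]{NXY}, whose proof in the rational curve case achieves the toric description of $\widehat{\X_{/C}}$ via a finite sequence of auxiliary blow-ups that preserve the combinatorial data around $\tau_C$, in particular the intersection numbers $b_i$ and the primitive generators $u_i$.
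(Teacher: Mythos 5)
Your proof is correct and reaches the same answer, but it takes a genuinely different route from the paper's. The paper's proof explicitly blows up $p_\infty$ a total of $s=1-\min_i b_i$ times, applies the positivity-case formula from \cite{NXY} to the resulting model $\X_s$ to read off $v_{\infty,s}$, and then recovers $v_\infty$ by tracking the weighted barycentric subdivisions through the relation $N_{\infty,s+1}v_{\infty,s+1}=N_{\infty,s}v_{\infty,s}+\sum_i v_i$, together with a rigidity lemma for $\Z$-affine structures compatible with star subdivisions. You instead go straight to the conclusion: since \cite[Prop.~5.4]{NXY} already packages the blow-up argument into the assertion that $\widehat{\X_{/C}}$ is formally toric, one can identify the rays of the resulting fan with $D_0,\ldots,D_n,D_\infty$, use snc-ness at $p_0$ to obtain a $\Z$-basis from $\sigma_0$, and read $v_\infty$ off the one-dimensional-stratum fan relation \cref{equ relation fan}. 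That approach is cleaner and avoids the recursion, though it leans more heavily on the precise output of NXY's Proposition 5.4, namely that the toric model $\cZ$ one obtains (after blowing up and contracting back) has primitive ray generators with $\ord(t)=1$ and intersection numbers $(C\cdot D_\alpha)_\cZ = (C\cdot D_\alpha)_\X$, since these are invariants of the formal neighbourhood $\widehat{\X_{/C}}$. You gesture at this in your last paragraph, but the phrase that the auxiliary blow-ups ``preserve the combinatorial data... in particular the intersection numbers $b_i$'' is a little imprecise: the blow-ups themselves do change the $b_i$ (to $b_i+s$); what is preserved is the formal completion of the original $\X$ along $C$, and with it the original $b_i$, once the blow-ups are undone. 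The paper's more computational route has the benefit of making this reconciliation explicit, whereas yours delegates it to NXY. Both are valid; they simply distribute the same blow-up argument differently between the two papers.
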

\begin{proof}
We write $b = \min_{i\leqslant n} b_i$; we assume $b$ to be negative or zero by the condition $b_1+\ldots+b_n =2$, as the case $n=2$ and $b_1=b_2=1$ is already treated in the proof of \cite[prop. 5.4]{NXY}. 

The blow-up $\X_1$ of the point $p_{\infty}$ in $\X$ yields a new irreducible component $D_{\infty,1}$ (we denote the strict transforms by the same letters for notational simplicity) with multiplicity $N_{\infty,1}=n+1$, the point $p_{\infty,1}= C \cap D_{\infty,1}$ and the intersection numbers $b_{i,1} \coloneqq - (C \cdot D_i)_{\X_1}=b_i +1$. If we repeat the process $s$ times, we obtain the models $\X_s$, the exceptional divisors $D_{\infty,s}$ with multiplicity $N_{\infty,s}=ns+1$, the points $p_{\infty,s}= C \cap D_{\infty,s}$ and the intersection numbers $b_{i,s}\coloneqq -(C \cdot D_i)_{\X_s}=b_i +s$.

For $s=1-b$, we have $\min_{i\leqslant n} \{b_{i,1-b}\} > 0$, and by \cite{NXY} the integral affine structure induced by $\X_{1-b}$ on $\Star(\tau_C)$ is given by $v_0, \ldots, v_n$ and 
\begin{equation} \label{equ:v_infinity}
    v_{\infty,1-b} %= \frac{1}{n(1-b)+1}(-1,b_{1,1-b},\ldots, b_{n-1,1-b})
=  \frac{1}{n(1-b)+1}(-1,b_1+1-b,\ldots,b_{n-1}+1-b).
\end{equation}
The sequence of blow-ups $\X_{s+1} \rightarrow \X_s$ induces (weighted) barycentric subdivisions of the faces $\tau_{p_{\infty,s}}$ with vertices such that
\begin{equation} \label{equ:v_infinity_relations}
N_{\infty,s+1}v_{\infty,s+1}= N_{\infty,s}v_{\infty,s} + \sum_{i=1}^{n}v_i.  
\end{equation}
Combining \cref{equ:v_infinity} and \cref{equ:v_infinity_relations}, at each step we obtain that 
$$v_{\infty,s} %= \frac{1}{n(1-b)+1}(-1,b_{1,1-b},\ldots, b_{n-1,1-b})
=  \frac{1}{ns+1}(-1,b_1+s,\ldots,b_{n-1}+s),$$ and in particular $v_{\infty} = (-1, b_1,\ldots, b_{n-1})$. The proposition follows from the following lemma.
\end{proof}

\begin{lem}
Let $B = \tau_1 \cup \tau_2$ be the union of two $n$-dimensional simplices along a face of codimension one. Assume we are given a $\Z$-affine structure on $B$, compatible with those on the $\tau_i$'s. 

Suppose there exists a sequence of (weighted) star subdivisions of $\tau_1$ such that $B' \coloneqq \Star(\tau_1 \cap \tau_2)$ (with respect to this subdivision) can be embedded in $\R^n$ compatibly with the $\Z$-affine structure. Then this embedding extends to $B$, and the $\Z$-affine structure on $B$ is uniquely recovered by this embedding.
\end{lem}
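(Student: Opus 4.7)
The guiding principle is that a $\Z$-affine embedding of an $n$-dimensional simplex into $\R^n$ is completely determined by the images of its $n+1$ vertices, so the task reduces to locating finitely many points. Label the vertices of $B$ as $v_0$ (the vertex of $\tau_2$ opposite to $\tau_1 \cap \tau_2$), $v_1,\ldots,v_n$ (the vertices of $\tau_1 \cap \tau_2$), and $v_\infty$ (the vertex of $\tau_1$ opposite to $\tau_1 \cap \tau_2$). Since $\tau_2$ is not subdivided, it is contained in $B'$, so the given embedding $\phi : B' \hookrightarrow \R^n$ already determines $\phi(v_0),\phi(v_1),\ldots,\phi(v_n)$. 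The only missing data is $\phi(v_\infty)$.

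The idea is to recover $\phi(v_\infty)$ by inverting the chain of star subdivisions. Each weighted star subdivision of a subcomplex of $\tau_1$ introduces exactly one new vertex which, by definition, is a positive rational barycentric combination of the vertices of the simplex being subdivided. After the full sequence of subdivisions, the open star $B' = \Star(\tau_1 \cap \tau_2)$ contains a unique maximal simplex on the $\tau_1$-side, namely $\langle v_1,\ldots,v_n,w\rangle$ where $w$ is the subdivision vertex ``closest'' to $\tau_1 \cap \tau_2$. Chaining the inverses of the subdivision relations (each invertible over $\Q$ since the weights are positive integers), I would express $v_\infty$ as a rational affine combination of $v_1,\ldots,v_n$ and $w$, all of which lie in $B'$. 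Applying $\phi$ to this combination produces a canonical candidate for $\phi(v_\infty) \in \R^n$, and extending affinely over $\tau_1$ yields a map $\tilde\phi : B \to \R^n$ agreeing with $\phi$ on $B'$.

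It remains to check that $\tilde\phi$ is a $\Z$-affine embedding for the given structure on $B$. On $\tau_2$ this is automatic; on the innermost simplex of the subdivided $\tau_1$, $\tilde\phi$ equals $\phi$, which is $\Z$-affine by hypothesis. The $\Z$-affine structure on $\tau_1$ is uniquely determined by that on any of its maximal subdivision cells together with the subdivision data, because passing from one subdivision cell to the adjacent one across a codimension-one face is controlled by the $\Z$-affine structure, and the same inverted relations that locate $\phi(v_\infty)$ also hold for the charts of the given structure. For uniqueness of the extension, any $\Z$-affine embedding of $B$ restricting to $\phi$ on $B'$ must agree with $\phi$ on $v_0,\ldots,v_n,w$, and since $v_\infty$ is an affine combination of these, its image is forced.

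The main technical concern I anticipate is ensuring integrality, namely that the extension is $\Z$-affine rather than merely $\Q$-affine: although $v_\infty$ is reconstructed via rational barycentric weights, the result must land in the $\Z$-affine lattice determined by the structure on $B$. This follows from the compatibility of the chosen star subdivisions with the ambient $\Z$-affine structure on $B$ (so that every intermediate subdivision vertex lies in the lattice generated by $\phi(v_1),\ldots,\phi(v_n),\phi(v_\infty)$), but verifying this amounts to running the induction of Proposition~\ref{prop:integral_structure_n} in reverse, which is precisely where the specific barycentric identities in equation~\eqref{equ:v_infinity_relations} are used.
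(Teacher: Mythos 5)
Your proposal is correct and follows essentially the same strategy as the paper's proof: reconstruct the chart on $\tau_1$ by inverting the chain of star subdivisions, then verify uniqueness and $\Z$-integrality. The paper packages the uniqueness step more abstractly -- noting that $B'$ and $\tau_1$ are two charts glued along a simplex $B'\cap\tau_1$, which admits no non-trivial $\Z$-automorphisms fixing its vertices, so the resulting affine structure on $B$ is forced -- but the construction of the model subset $\tilde{B}\subset\R^n$ by undoing the star subdivisions on $B'$ is exactly your recovery of $\phi(v_\infty)$.
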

\begin{proof}
The assumptions yield two charts for the $\Z$-affine structure on $B$: the $\Z$-affine subsets $B'$ and $\tau_1$. These two charts are glued along $B' \cap \tau_1$ which is a simplex and thus has no non-trivial $\Z$-automorphisms preserving the vertices, hence the affine structure on $B$ is uniquely determined. By induction on the number of star subdivisions, there exists a unique subset $\tilde{B} \subset \R^n$ such that $B' \subset \R^n$ can be obtained as the result of the same star subdivisions of $\tilde{B}$, and uniqueness of the affine structure ensures a $\Z$-affine isomorphism $B \simeq \tilde{B}$.
\end{proof}
\begin{rem}
Consider an irreducible component $D_i$ of $\X_k$ and write $\Delta_{D_i}= \sum_{j \neq i} D_j \cap D_i$. By adjunction, the pair $(D_i, \Delta_{D_i})$ is log Calabi--Yau, i.e. $D_i$ is a smooth projective variety over $k$ and $\Delta_{D_i}$ is a divisor such that $K_{D_i}+ \Delta_{D_i}$ is trivial. By \cite[Theorem 6.14]{EvansMauri} there exists a Lagrangian torus fibration $$\phi: \mathcal{U} \rightarrow B \subseteq \Star(v_{D_i}) \setminus W$$
where $\mathcal{U}$ is a symplectic tubular neighborhood of the $1$-dimensional strata of $\Delta_{D_i}$, $B$ is a retract of $\Star(v_{D_i}) \setminus W$, and $W$ is the union of cells of codimension $\geqslant 2$ in $\Sk(\X)$. The fibration $\phi$ is constructed gluing toric moment maps defined in the neighborhood of each stratum curve of $\Delta_{D_i}$. Evans and Mauri compare the monodromy $T_\phi$ induced by $\phi$ on $B$ to the monodromy $T_{\rho_\X}$ induced by the affinoid torus fibration $${\rho_{\X}}:   \rho_\X^{-1}(\Star(v_{D_i}) \setminus W) \rightarrow \Star(v_{D_i}) \setminus W$$ and conclude that they are dual. This means that given a loop $\gamma \in \pi_1(B) \simeq \pi_1(\Star(v_{D_i}) \setminus W)$, we have $T_{\rho_\X}(\gamma) = (T_\phi(\gamma)^{-1})^{T}$. Thus the affine structure constructed in \cite{NXY} has a symplectic topological analog. The duality is due to the fact that the image of the moment maps is $M_\R$, while the image of the tropicalization map $\val$ is in $N_\R$. 
\end{rem}

\begin{rem}
When $n=2$, an irreducible component $D$ of $\X_k$ has boundary $\De_D \coloneqq  \sum_{i=1}^r C_i$, where $C_i := D_i \cap D$. Since the simple normal crossing curve $\De_D \in \lvert - K_D \rvert$ is an anticanonical curve by adjunction, it follows from general surface theory that $\De_D$ is a cycle of rational curves $(C_i)_{i \le r}$, whose geometry is encoded by the $b_i = -(C_i \cdot D) = - (C_i^2)_D -2$. We label the curves so that for $i \le r$, $C_i \cap C_{i+1} \neq \varnothing$, with convention $C_{r+1} = C_1$.

One can associate to the pair $(D, \De_D)$ a pseudo-fan, which is a singular affine structure on $\R^2$, singular at most at $0$. The singularity at $0$ is a way to measure the defect of $(D, \De_D)$ of being toric: the affine structure extends smoothly at $0$ if and only $(D, \De_D)$ is a toric pair \cite[Proposition 3.9]{Eng}. The construction, as explained in \cite[§1.2]{GHK}, is the following. For each node $p_i = C_i \cap C_{i+1}$, consider a cone $\sigma_i \coloneqq \R_{\ge 0}v_i + \R_{\ge 0}v_{i+1} \subset \R^2$, $(v_i, v_{i+1})$ being a basis of the lattice $\Z^2$. The cones $\sigma_i$ and $\sigma_{i+1}$ are then glued to each other along $\R_{\ge 0} v_{i+1}$, and the affine structure is extended through the edge by pretending that the pair $(D, \De_D)$ is toric. If the pair was toric, the $\sigma_i$'s would be the maximal cones of its fan, and the relation
$$ v_{i+2} + v_i = -(C_{i+1}^2) v_{i+1}$$
would hold by Eq. \ref{equ relation fan}, so that the chart $\psi_i : \sigma_i \cup \sigma_{i+1} \fl \R^2$ that defines the $\Z$-affine structure satisfies $\psi_i(0)=0$, $\psi_i(v_i) = (1,0)$, $\psi_i(v_{i+1})= (0,1)$ and $\psi_i (v_{i+2}) = (-1, -(C_{i+1}^2))$, and is extended by dilatation. The union of the $\sigma_i$'s glued along the successive edges is homeomorphic to $\R^2$, and we obtain this way an $\Z$-affine structure away from the origin, extending to $0$ if and only the pair is toric.

It follows from \cref{prop:integral_structure_n} that the singular $\Z$-affine structure induced by the Berkovich retraction $\rho_\X$ on $\Star(v_D)$ coincides with the one of the pseudo-fan.
\end{rem}

\section{Degeneration of quintic 3-folds} \label{sec:quintic 3-fold}

As discussed in the introduction, given a maximally degenerate family $X=(X_t)_{t \in \D^*}$ of Calabi--Yau varieties, Kontsevich and Soibelman predict that the base $S$ of the conjectural SYZ fibration $\rho_t: X_t \rightarrow S$ matches the essential skeleton of $X$. 
The SYZ fibration induces an integral affine structure (with singularities) on the base $S$, which is relevant to the reconstruction of the mirror family; in the non-archimedean interpretation of mirror symmetry of \cite{KontsevichSoibelman} such structure is induced by Berkovich retractions $X^{\an} \rightarrow \Sk(X)$. 

The results in \cite{NXY} and in \cref{Section toric thm} can be used to construct non-archimedean retractions and 
%lay the foundations for the description of 
integral affine structures on $\Sk(X)$, using good minimal dlt models of $X$. As quintic Calabi--Yau hypersurfaces have played a key role in the development of mirror symmetry, it is natural to apply and test the non-archimedean approach to this family. More precisely, let $$X=\{ tF_5(z_1, z_2, z_3, z_4, z_5) + z_1 z_2 z_3 z_4 z_5 =0 \} \subset \CP^4_K,$$ with $F_5$ a generic homogeneous polynomial of degree $5$.
We endow $\Sk(X) \simeq \mathbb{S}^3$ with the simplicial structure induced by the identification with the dual complex of $\X_k$, with $\cX$ being the closure of $X$ in $\CP^4_R$. In this section we prove 

\begin{prop}[\cref{intro:main quintic}]
%\label{intro:main quintic}
There exists a continuous retraction $\pi : X^{\an} \rightarrow \Sk(X)$ such that
\begin{itemize}
\itemsep0pt
    \item $\pi$ can be written as a composition $\pi' \circ \rho_{\X'}$, with $\X'$ being an snc model of $X$ and $\pi' : \Sk(\X') \rightarrow \Sk(X)$ a piecewise-linear map (thus $\pi$ pulls back any piecewise-linear function on $\Sk(X)$ to a model function on $X^{\an}$);
    
    \item $\pi$ is an affinoid torus fibration outside a piecewise-linear locus $\Gamma$, that has codimension $2$ and is contained in the $2$-skeleton of $\Sk(X)$;

    %\item the singular locus $\Gamma$ is a trivalent graph contained in the 2-skeleton of $\Sk(X)$, and was previously described in \cite{Gross2001};
    
    \item $\pi$ induces an integral affine structure on $\Sk(X) \setminus \Gamma$, which is the same as the one constructed in \cite{Gro05,Li,HJMM}.
\end{itemize}
\end{prop}

\subsection{Setting and plan of the proof} \label{subsec:setting quintic}

The model $\X= \{z_1 z_2 z_3 z_4 z_5 +tF_5(z_1,z_2,z_3,z_4,z_5) =0\} \subset \mathbb{P}^4_R$ has the following properties:

\begin{itemize}
\itemsep0em
    \item[1.] the special fiber $\X_k$ is reduced, consisting of five Weil divisors, i.e. $D_i=\{z_i=t=0\}$. We denote $D_i':= \{z_i=F_5=0\}$;
    %each of which is isomorphic to $\mathbb{P}^3_\mathbb{C}$.
    \item[2.] the singular locus ${\X}^{\text{sing}}$ of the total space $\X$ is contained in the special fiber, and is the intersection in $\CP_k^4$ of $\{ F_5 =0 \}$ and the union of surfaces $S_{ij} = \{z_i = z_j =0 \}$ for $i \neq j$. In particular, each $D_i$ intersects 
    %the singular locus 
    ${\X}^{\text{sing}}$ 
    %of $\cZ$ 
    along the union of four quintic curves $C_{ij}$ and by genericity of $F_5$, we may assume that $C_{ij}$ does not intersect the torus fixed points of $D_i$;   
    \begin{minipage}{0.4\textwidth}
    $${\X}^{\text{sing}} \cap D_i = \bigcup_{ \substack{j=1 \\ j \neq i}}^5 C_{ij}$$
    $$C_{ij} \subseteq D_i \cap D_j$$
    $$ C_{ij} \cap C_{ij'}=\{5 \text{ points} \} \text{ for }j\neq j'$$
    \end{minipage}
    \begin{minipage} {0.5\textwidth}
    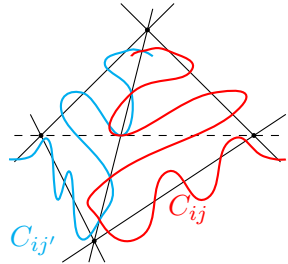
\begin{figure}[H]
    \centering
    \begin{tikzpicture} [scale=0.7]
	\coordinate (01) at (0.1,1.5);
	\coordinate (1) at (-0.1,1.6);
	\coordinate (2) at (-0.3,0.8);
	\coordinate (3) at (-0.5,0);
	\coordinate (4) at (-0.7,-0.8);
	\coordinate (5) at (-0.9,-1.6);
	\coordinate (41) at (-1.6,0.4);
	\coordinate (31) at (-1.2,0.8);
	\coordinate (21) at (-0.8,1.35);
	\coordinate (11) at (-0.4,1.6);
	\coordinate (51) at (-2.4,-0.4);
	\coordinate (12) at (-1.3,-1.9);
	\coordinate (22) at (-1.45,-0.7);
	\coordinate (32) at (-1.7,-1.2);
	\coordinate (42) at (-1.85,-0.1);
	\coordinate (52) at (-2.2,-0.4);
	\coordinate (62) at (-2.6,-0.45);
	\coordinate (03) at (-0.3,1.5);
	\coordinate (43) at (1.6,0.4);
	\coordinate (33) at (1.2,0.8);
	\coordinate (23) at (0.8,1.2);
	\coordinate (13) at (0.4,1.6);
	\coordinate (53) at (2.4,-0.4);
	\coordinate (14) at (-0.1,-1.9);
	\coordinate (24) at (0.35,-0.7);
	\coordinate (34) at (1.1,-1.2);
	\coordinate (44) at (1.55,-0.1);
	\coordinate (54) at (2.2,-0.4);
	\coordinate (64) at (2.6,-0.45);
	\node[right] at  (0.2,-1.4) {\textcolor{red}{$C_{ij}$}};
	\filldraw (-2,0) circle (1pt);
	\filldraw (2,0) circle (1pt);
	\filldraw (0,2) circle (1pt);
	\filldraw (-1,-2) circle (1pt);
	\node[below, left] at (-1.5,-2) {\textcolor{cyan}{$C_{ij'}$}};
	\draw[dashed] (-2.5,0)-- (2.5,0);
	\draw (-0.5,2.5) -- (2.5,-0.5);
	\draw (0.5,2.5) -- (-2.5,-0.5);
	\draw (-0.8,-2.4) -- (-2.2,0.4);
	\draw (-1.6,-2.4) -- (2.6,0.4);
	\draw (0.1,2.4) -- (-1.1,-2.4);
	\draw [cyan, thick, xshift=4cm] plot [smooth, tension=0.8] coordinates {(01)(1) (11) (21) (2) (3) (31) (41) (4) (5) (12) (22) (32) (42) (52) (62)};
	\draw [red, thick, xshift=4cm] plot [smooth, tension=1] coordinates {(03)(1) (13) (23) (2) (3) (33) (43) (4) (5) (14) (24) (34) (44) (54) (64)};
	\end{tikzpicture}
    \caption{Irreducible component $D_i$}
    \label{figure:componentD_i}
    \end{figure}
    \end{minipage}
    \vspace{5pt}
    \item[3.] the pair $(\X, \X_k)$ is dlt, in particular snc away from 
    %the singular locus 
    ${\X}^{\text{sing}}$; we refer to \cref{subsec:local_model_quintic} for a local study of the pair at the singular points;

    \item[4.] the dual complex $\mathcal{D}(\X_k)$ of the special fiber is homeomorphic to the $3$-sphere $\mathbb{S}^3$, and the triangulation of $\mathcal{D}(\X_k)$ is the same as the standard one on the boundary of a $4$-simplex;
    \item[5.] by adjunction the canonical bundle $K_\X$ is trivial.
\end{itemize}
We conclude that $\X$ is a minimal dlt model of the quintic $3$-fold  $X \coloneqq\X_K$, but it is not good in the sense of \cref{prel:models}
%\cite[\S 1.10]{NXY} 
since the prime components of the special fiber are not $\Q$-Cartier. In particular, even if the dual complex $\mathcal{D}(\X_k)$ is well-defined, $\X$ does not induce a well-defined retraction of $X^{\text{an}}$ onto $\mathcal{D}(\X_k)=\Sk(X)$.

\vspace{10pt} Our aim is thus to explicitly construct several explicit good minimal dlt models of $X$ starting from $\cX$; we will proceed as follows.

\begin{itemize}
\itemsep0pt
    \item[-] (\cref{subsec:min model Xijkl}) For any order $(i,j,k,l,h)$ on $\{1,\ldots,5\}$, we construct a small resolution of $\X$ by blowing-up in order the four divisors $D_i$, $D_j$, $D_k$ and $D_l$. The resulting resolution is denoted $\X_{ijkl}$, is a good minimal dlt model of $X$ and comes equipped with the Berkovich retraction $$\rho_{\X_{ijkl}}: X^{\an} \rightarrow \Sk(\X_{ijkl}) = \Sk(X) \simeq \mathbb{S}^3.$$ The skeleton $\Sk(\X_{ijkl})$ coincides with $\mathcal{D}(\X_k)$ as simplicial complex; thus, independently on the order, all skeletons $\Sk(\X_{ijkl})$ define the same simplicial structure on $\Sk(X)$.
    
    \item[-] (\cref{subsec:combinatorial retraction}) We construct a model $\cZ$ of $X$ which dominates any model $\X_{ijkl}$, so that $\rho_{\X_{ijkl}}$ factors through $\rho_\cZ$. We then define a combinatorial retraction $\pi'$ which contracts the skeleton $\Sk(\cZ)$ onto $\Sk(X)$. This allows us to consider the composition $$\pi: X^{\an} \xrightarrow{\rho_\cZ} \Sk(\cZ) \xrightarrow{\pi'} \Sk(X)$$ which is at the core of the statement of \cref{intro:main quintic}. The retraction $\pi'$ is constructed so that the composition $\pi$ is locally equal to a $\rho_{\X_{ijkl}}$, the order depending on the region of $\Sk(X)$.
    
    \item[-] (\cref{subsec:local_model_quintic} to \cref{subsec:local_combinatorial}) The constructions and properties of $\X_{ijkl}, \cZ$ and $\pi'$ rely on a local study of the model $\X$: étale locally around each point of ${\X}^{\text{sing}} \cap D_i \cap D_j \cap D_{j'}$, $\X$ is isomorphic to a toric variety and the resolutions are given by refinements of the associated fan.
    %the toric variety $\U= V(xyz - wt) \subseteq \A^5$
\end{itemize}

For the description of the discriminant locus $\Gamma$, we will need the following general definition:

\begin{defn} \label{defn:disc in simplex}
Let $\tau$ be a simplex of dimension $m$ and consider the first barycentric subdivision $\tau'$ of $\tau$. For each vertex $v$ of $\tau$, we denote the star of $v$ in $\tau'$ by $\Star(v)'$ and define $\Gamma_{m-1}$ to be the polyhedral complex of dimension $m-1$ given by $$\Gamma_{m-1} \coloneqq \tau \setminus \bigcup_{v \in \tau} \Star(v)' \subset \tau.$$
\end{defn}
For instance, if $m=2$, $\Gamma_1$ is the union of the three line segments joining the barycenter of the triangle to the barycenters of the edges.

\subsection{Local resolution} \label{subsec:local_model_quintic}

We consider a point in ${\X}^{\text{sing}} \cap D_1 \cap D_2 \cap D_3$; the singular points in the other strata curves can be treated analogously. \'Etale locally around such a point, $\X$ is isomorphic to (the base change to $K$ of) the toric variety $ \U\coloneqq V(xyz - wt) \subset \A^5_k$, where $$D_1|_{\U}= \{x=t=0\}, \quad D_2|_{\U}= \{y=t=0\} \,\text{ and } \, D_3|_{\U}= \{z=t=0\}$$ are the components of the special fiber $\{t=0\}$ in $\cU$. We still denote these by $D_1,D_2$ and $D_3$; they form the toric boundary of $\cU$ together with $$D_1'|_{\U}=\{x=w=0\}, \quad D_2'|_{\U}= \{y=w=0\} \,\text{ and } \, D_3'|_{\U}= \{z=w=0\}.$$ 
The pair $(\X, \X_k)_{|\U}=(\U, \sum_{i=1,2,3} D_i|_{\U})$ is log canonical by \cite[Proposition 11.4.24]{CoxLittleSchenck2011}.

We denote the strata surfaces of the special fiber by $D_{ij} \coloneqq D_i \cap D_j$, for $i, j \in \{1,2,3\}$ and $i \neq j$, and the stratum curve by $D_{123}\coloneqq \{x=y=z=t=0\}$. The singular locus ${\U}^{\text{sing}}$ consists of the torus invariant curves $$ C_{12}|_{\U}=\{x=y=w=t=0\}=D_1 \cap D_2 \cap D_1' \cap D_2',$$ $$C_{13}|_{\U}=\{x=z=t=w=0\} \, \text{ and }\, C_{23}|_{\U}=\{y=z=w=t=0\},$$ which intersect each other at the torus invariant point $p \coloneqq \{x=y=z=w=t=0\}$.
%
%\begin{minipage}{0.6\textwidth}

%In $\U$, the special fiber $\{t=0\}$ consists of $$D_1= \{x=t=0\}, \quad D_2= \{y=t=0\} \,\text{ and } \, D_3= \{z=t=0\},$$ the strata surfaces are denoted by $D_{ij} \coloneqq D_j \cap D_j$ for $i, j \in \{1,2,3\}$ and $i \neq j$, and the stratum curve by $D_{123}\coloneqq \{x=y=z=t=0\}$. The singular locus is given by the curves $$ C_{12}=\{x=y=w=t=0\}, \quad C_{13}=\{x=z=t=w=0\} \, \text{ and }\, C_{23}=\{y=z=w=t=0\}$$ which intersect each other at the point $p= \{x=y=z=w=t=0\}$.
%\end{minipage}

\tdplotsetmaincoords{60}{35}
\tdplotsetrotatedcoords{0}{0}{130}
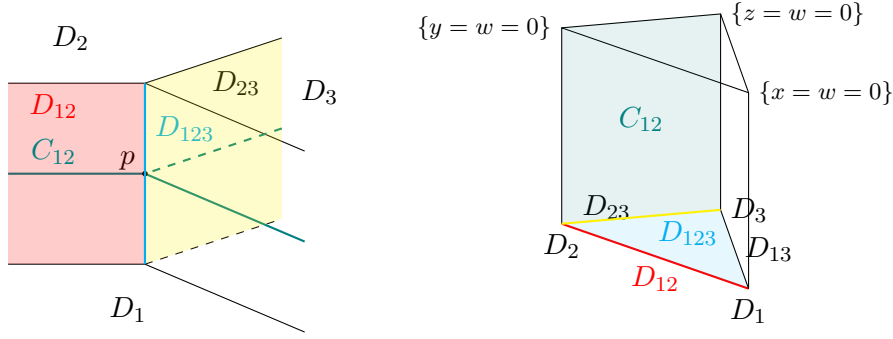
\begin{figure}[H]
\begin{center}
	\begin{tikzpicture}[scale=0.6] %SPECIAL fiber MODEL U
	\coordinate (1) at (4,-1.5);
	\coordinate (2) at (0,0);
	\coordinate (3) at (2,1);
	\coordinate (21) at (0,2);
	\coordinate (31) at (2,3);
	\coordinate (11) at (4,0.5);
	\coordinate (12) at (2,-0.75);
	\coordinate (13) at (3,-0.25);
	\coordinate (23) at (1,0.5);
	% 
	%\draw[red,thick] (2)--(1);
	\draw (-3,0)--(0,0);
	\draw (-3,4)--(0,4);
	\draw (0,0)--(3.5,-1.5);
	\draw (0,4)--(3.5,2.5);
	\draw (0,4)--(3,5);
	\draw[dashed] (0,0)--(3,1);
	\draw[dashed, teal,thick] (3,3)--(0,2);
	\draw[teal,thick] (-3,2)--(0,2);
	\draw[teal,thick] (3.5,0.5)--(0,2);
	\filldraw (0,2) circle (1.4pt);
	\node[left] at (0,2.3) {$p$};
	\node[right] at (0,3) {\textcolor{cyan}{$D_{123}$}};
	\node[right] at (-1,-1) {$D_1$};
	\node[left] at (-1,5) {$D_2$};
	\node[right] at  (3.2,3.8) {$D_3$};
	\node at  (2,4) {$D_{23}$};
	\fill[fill=red, fill opacity=0.2] (0,0) -- (0,4)--(-3,4) -- (-3,0);
	\fill[fill=yellow, fill opacity=0.2] (0,0) -- (0,4)--(3,5) -- (3,1);
	\node[above] at (-2,2) {\textcolor{teal}{$C_{12}$}};
	\node[above] at (-2,3) {\textcolor{red}{$D_{12}$}};
	\draw[cyan,thick] (0,0)--(0,4);
	\end{tikzpicture} \hspace{15pt}
\begin{tikzpicture}[tdplot_main_coords,scale=1.5]
\node[below] at (-1,0,0) {$D_2$};
\node[below] at (1,0,0) {$D_1$};
\node[right] at (0,1,0) {$D_3$};
\node[left] at (-1,0,2) {\footnotesize $\{y=w=0\}$};
\node[right] at (1,0,2) {\footnotesize $\{x=w=0\}$};
\node[right] at (0,1,2) {\footnotesize $\{z=w=0\}$};
\node[below] at (0,0,0) {\textcolor{red}{$D_{12}$}};
\node[left] at (-0.5,0.5,0.1) {$D_{23}$};
\node[right] at (0.5,0.5,0) {$D_{13}$};
\node at (-0.5,0.5,1) {\textcolor{teal}{$C_{12}$}};
\node at (0,0.5,0) {\textcolor{cyan}{$D_{123}$}};
\draw (-1,0,0)--(-1,0,2)--(1,0,2)--(1,0,0);
\draw (-1,0,2)--(0,1,2)--(0,1,0);
\draw (0,1,2)--(1,0,2);
\draw[red,thick] (-1,0,0)--(1,0,0);
%\draw (0,0,0.2)--(0,0,0)--(0.15,0.15,0);
%\draw (0.5,0.5,0.2)--(1/2,1/2,0)--(0.35,0.35,0);
%\draw (-0.5,0.5,0.2)--(-1/2,1/2,0)--(-0.3,0.4,0);
\fill[fill=cyan, fill opacity=0.1](-1,0,0)--(1,0,0)--(0,1,0);
\fill[fill=teal, fill opacity=0.1] (-1,0,0)--(0,1,0)--(0,1,2)--(-1,0,2);
%\fill[fill=red, fill opacity=0.2] (-0.5,0.5,0.2)--(-1/2,1/2,0)--(-0.3,0.4,0);
\draw (1,0,0)--(0,1,0);
\draw[yellow,thick] (-1,0,0)--(0,1,0);
\end{tikzpicture}
\end{center}
\caption{Special fiber of $\U$ and a \emph{slice} of the fan of the toric variety $\U$}
     %   \label{fig:my_label}
\end{figure}

The toric blow-up $G_1: \U_1 \coloneqq \Bl_{D_1} \cU \rightarrow \U$ along $D_1$ resolves the singularities along $C_{12}$ and $C_{13}$ except at the point $p$. The exceptional locus of $G_1$ consists of two surfaces $S_{12}$ and $S_{13}$ intersecting each other along a curve: these surfaces are mapped by $G_1$ to the respective singular curves, are contained in the strict transform of $D_1$, and correspond to two new edges in the slice of the fan of $\U_1$. The intersection of $S_{12}$ and $S_{13}$ corresponds to a new $2$-dimensional face in the slice of the fan. With a slight abuse of notation we keep the same notation for the strict transforms in $\U_1$.

%The local model of 
A resolution of $\U$ is given by the composition of $G_1$ and the toric blow-up $G_{12}: \U_{12} \coloneqq \Bl_{D_2} \U_1 \rightarrow \U_1$ along $D_2$; the latter indeed resolves the singularities along $C_{23}$. The exceptional locus of $G_{12}$ is a surface $S_{23}$, which is mapped by $G_{12}$ to $C_{23}$ and is contained in the strict transform of $D_2$. The morphism $G_{12}$ induces a new $2$-dimensional face in the slice of the fan of $\U_{12}$, and a new edge corresponding to the surface $S_{23}$. In particular, after the blow-up $G_{12}$, the strict transforms of the surface $S_{12}$ and of the divisor $D_3$ have empty intersection.

\begin{figure}[H]
    \centering
\tdplotsetmaincoords{60}{35}
\tdplotsetrotatedcoords{0}{0}{130}

\begin{tikzpicture}[tdplot_main_coords,scale=1.8]
\node[below] at (-1,0,0) {$D_2$};
\node[below] at (1,0,0) {$D_1$};
\node[right] at (0,1,0) {$D_3$};
\node[below] at (0,0,0) {$D_{12}$};
\node[left] at (-0.5,0.5,0.1) {$D_{23}$};
\node[right] at (0.5,0.5,0) {$D_{13}$};
\draw (-1,0,0)--(-1,0,2)--(1,0,2)--(1,0,0);
\draw (-1,0,2)--(0,1,2)--(0,1,0);
\draw (0,1,2)--(1,0,2);
\draw (0,0,0.2)--(0,0,0)--(0.15,0.15,0);
\draw (0.5,0.5,0.2)--(1/2,1/2,0)--(0.35,0.35,0);
\draw (-0.5,0.5,0.2)--(-1/2,1/2,0)--(-0.3,0.4,0);
\fill[fill=red, fill opacity=0.2](0,0,0.2)--(0,0,0)--(0.15,0.15,0);
\fill[fill=red, fill opacity=0.2] (0.5,0.5,0.2)--(1/2,1/2,0)--(0.35,0.35,0);
\fill[fill=red, fill opacity=0.2] (-0.5,0.5,0.2)--(-1/2,1/2,0)--(-0.3,0.4,0);
%\draw (1,0,0)--(-1,0,2);
%\draw (1,0,0)--(0,1,2);
%\draw (0,1,2)--(-1,0,0);
\draw (-1,0,0)--(1,0,0)--(0,1,0)--(-1,0,0);
\end{tikzpicture}
\begin{tikzpicture}[tdplot_main_coords,scale=1.8]
\node[below] at (-1,0,0) {$D_2$};
\node[below] at (1,0,0) {$D_1$};
\node[right] at (0,1,0) {$D_3$};
\node[below] at (0,0,0) {$D_{12}$};
\node[left] at (-0.5,0.5,0.1) {$D_{23}$};
\node[right] at (0.5,0.5,0) {$D_{13}$};
\draw (-1,0,0)--(-1,0,2)--(1,0,2)--(1,0,0);
\draw (-1,0,2)--(0,1,2)--(0,1,0);
\draw (0,1,2)--(1,0,2);
\draw (0,0,0.2)--(0,0,0)--(0.15,0.15,0);
\draw (0.5,0.5,0.2)--(1/2,1/2,0)--(0.35,0.35,0);
\draw (-0.5,0.5,0.2)--(-1/2,1/2,0)--(-0.3,0.4,0);
\fill[fill=red, fill opacity=0.2](0,0,0.2)--(0,0,0)--(0.15,0.15,0);
\fill[fill=red, fill opacity=0.2] (0.5,0.5,0.2)--(1/2,1/2,0)--(0.35,0.35,0);
\fill[fill=red, fill opacity=0.2] (-0.5,0.5,0.2)--(-1/2,1/2,0)--(-0.3,0.4,0);
\draw (1,0,0)--(-1,0,2);
\draw (1,0,0)--(0,1,2);
\fill[fill=orange, fill opacity=0.1](1,0,0)--(-1,0,2)--(0,1,2);
%\draw (0,1,2)--(-1,0,0);
\draw (-1,0,0)--(1,0,0)--(0,1,0)--(-1,0,0);
\end{tikzpicture}
\begin{tikzpicture}[tdplot_main_coords,scale=1.8]
\node[below] at (-1,0,0) {$D_2$};
\node[below] at (1,0,0) {$D_1$};
\node[right] at (0,1,0) {$D_3$};
\node[below] at (0,0,0) {$D_{12}$};
\node[left] at (-0.5,0.5,0.1) {$D_{23}$};
\node[right] at (0.5,0.5,0) {$D_{13}$};
\draw (-1,0,0)--(-1,0,2)--(1,0,2)--(1,0,0);
\draw (-1,0,2)--(0,1,2)--(0,1,0);
\draw (0,1,2)--(1,0,2);
\draw (0,0,0)--(0.075,0.075,0.1);
\draw (0,0,0.2)--(0,0,0)--(0.15,0.15,0);
\draw (0.5,0.5,0.2)--(1/2,1/2,0)--(0.35,0.35,0);
\draw (-0.5,0.5,0.2)--(-1/2,1/2,0)--(-0.3,0.4,0);
\fill[fill=red, fill opacity=0.2](0,0,0.2)--(0,0,0)--(0.15,0.15,0);
\fill[fill=red, fill opacity=0.2] (0.5,0.5,0.2)--(1/2,1/2,0)--(0.35,0.35,0);
\fill[fill=red, fill opacity=0.2] (-0.5,0.5,0.2)--(-1/2,1/2,0)--(-0.3,0.4,0);
\draw (1,0,0)--(-1,0,2);
\draw (1,0,0)--(0,1,2);
\fill[fill=orange, fill opacity=0.1](1,0,0)--(-1,0,2)--(0,1,2);
\draw (-0.5,0.5,1)--(-1,0,0);
\draw[dashed] (0,1,2)--(-0.5,0.5,1);
\fill[fill=teal, fill opacity=0.1](1,0,0)--(-1,0,0)--(0,1,2);
\draw (-1,0,0)--(1,0,0)--(0,1,0)--(-1,0,0);
\end{tikzpicture}
    \caption{Slices of the fan of $\U$, $\U_1$ and $\U_{12}$}
    \label{fig:resolution_fan}
\end{figure}

The small resolution $\U_{12}$ of $\U$ induces an isomorphism on the strict transform of $D_{13}$ and of $D_{23}$, while its restriction to the strict transform of $D_{12}$ is the blow-up of $D_{12}$ along a general point. These facts can be checked computing the charts of the blow-ups $G_1$ and $G_{12}$. Alternatively, they can be verified looking at the fans of the strata surfaces $D_{ij}$ in the slice of the fans of $\U$, $\U_1$ and $\U_{12}$; indeed, the fan of $D_{ij}$ is induced by the intersection of the slice with a normal plane to the edge corresponding to $D_{ij}$.

\subsection{Local dominating model} \label{subsec:local dominating model}
We now introduce the local model for the dominating model $\cZ \fl \X$ we will construct.
\\We consider the blow-up of the exceptional surfaces $S_{12},S_{13}$ and $S_{23}$ one after the other
\begin{center}
    $\arraycolsep=1.6pt\def\arraystretch{1}
    \begin{array}{ccccccccc}
         %g_{ijkl}: 
         \cV_{123} 
         & \xrightarrow[H_{23}]{\text{blow-up of }S_{23}}
         & \cV_{13}
         & \xrightarrow[H_{13}]{\text{blow-up of }S_{13}}
         & \cV_{12}
         & \xrightarrow[H_{12}]{\text{blow-up of }S_{12}}
         & \U_{12} 
         & \xrightarrow[G_{12} \circ G_1]
         & \cU \\
         \cup & & 
         \cup & & 
         \cup & & 
         & & \downarrow \\
         E_{23} & &
         E_{13} & &
         E_{12} & &
         & & \mathbb{A}^1_t.
    \end{array}
    $
\end{center}
As these surfaces are toric strata of $\U_{12}$, the blow-ups are toric as well and the corresponding fans are refinements of the fan of $\U_{12}$. We note that
%We look at the slice of the fans:

\begin{itemize}
\itemsep0pt
    \item[-] the dual complexes of the special fibers of $\cU_{12}, \cV_{12}, \cV_{13}$ and $\cV_{123}$ are obtained from the slices of the corresponding fans by removing the vertices corresponding to $D_1'$, $D_2'$ and $D_3'$, as well as each face containing one of these.
\tdplotsetmaincoords{60}{35}
\begin{figure}[H]
    \centering
\begin{tikzpicture}[tdplot_main_coords,scale=1.8]
\node[below] at (-1,0,0) {$v_2$};
\node[below] at (1,0,0) {$v_1$};
\node[right] at (0,1,0) {$v_3$};
\draw[gray] (-1,0,0)--(-1,0,2)--(1,0,2)--(1,0,0);
\draw[gray] (-1,0,2)--(0,1,2)--(0,1,0);
\draw[gray] (0,1,2)--(1,0,2);
\draw[gray,densely dotted] (1,0,0)--(-1,0,2);
\draw[gray,densely dotted] (1,0,0)--(0,1,2);
\draw[gray,dotted] (0,1,2)--(-1,0,0);
\draw (-1,0,0)--(1,0,0)--(0,1,0)--(-1,0,0);
\node[below] at (0,0,-0.5) {$\Sk(\U_{12})=\Sk(\U)$};
\end{tikzpicture}
\begin{tikzpicture}[tdplot_main_coords,scale=1.8]
\node[below] at (-1,0,0) {$v_2$};
\node[below] at (1,0,0) {$v_1$};
\node[right] at (0,1,0) {$v_3$};
\node[left] at (0,0,1) {$v_{12}$};
\draw[gray] (-1,0,0)--(-1,0,2)--(1,0,2)--(1,0,0);
\draw[gray] (-1,0,2)--(0,1,2)--(0,1,0);
\draw[gray] (0,1,2)--(1,0,2);
\draw[gray,densely dotted] (1,0,0)--(-1,0,2);
\draw[gray,densely dotted] (1,0,0)--(0,1,2);
\draw[gray,dotted] (0,1,2)--(-1,0,0);
\draw (-1,0,0)--(1,0,0)--(0,1,0)--(-1,0,0);
\draw[gray,dashdotted] (0,0,1)--(0,1,2);
\draw[gray,dashdotted] (0,0,1)--(1,0,2);
\draw (-1,0,0)--(0,0,1)--(1,0,0);
\node[below] at (0,0,-0.5) {$\Sk(\cV_{12})$};
\end{tikzpicture}
\begin{tikzpicture}[tdplot_main_coords,scale=1.8]
\node[below] at (-1,0,0) {$v_2$};
\node[below] at (1,0,0) {$v_1$};
\node[below] at (0,1,0) {$v_3$};
\node[left] at (0,0,1) {$v_{12}$};
\node[right] at (1/2,1/2,1) {$v_{13}$};
\draw[gray] (-1,0,0)--(-1,0,2)--(1,0,2)--(1,0,0);
\draw[gray] (-1,0,2)--(0,1,2)--(0,1,0);
\draw[gray] (0,1,2)--(1,0,2);
\draw[gray,densely dotted] (1,0,0)--(-1,0,2);
\draw[gray,densely dotted] (1,0,0)--(0,1,2);
\draw[gray,dotted] (0,1,2)--(-1,0,0);
\draw (-1,0,0)--(1,0,0)--(0,1,0)--(-1,0,0);
\draw[gray,densely dotted] (0,0,1)--(0,1,2);
\draw[gray,densely dotted] (0,0,1)--(1,0,2);
\draw[gray,dashdotted] (1/2,1/2,1)--(1,0,2);
\draw (0,0,1)--(1/2,1/2,1)--(1,0,0);
\draw[dashed] (-1,0,0)--(1/2,1/2,1)--(0,1,0);
\draw (-1,0,0)--(0,0,1)--(1,0,0);
\node[below] at (0,0,-0.5) {$\Sk(\cV_{13})$};
\end{tikzpicture}
\begin{tikzpicture}[tdplot_main_coords,scale=1.8]
\node[below] at (-1,0,0) {$v_2$};
\node[below] at (1,0,0) {$v_1$};
\node[below] at (0,1,0) {$v_3$};
\node[left] at (0,0,1) {$v_{12}$};
\node[right] at (1/2,1/2,1) {$v_{13}$};
\node[above] at (-1/2,1/2,1) {$v_{23}$};
\draw[gray] (-1,0,0)--(-1,0,2)--(1,0,2)--(1,0,0);
\draw[gray] (-1,0,2)--(0,1,2)--(0,1,0);
\draw[gray] (0,1,2)--(1,0,2);
\draw[gray,densely dotted] (1,0,0)--(-1,0,2);
\draw[gray,densely dotted] (1,0,0)--(0,1,2);
\draw[gray,dotted] (0,1,2)--(-1,0,0);
\draw (-1,0,0)--(1,0,0)--(0,1,0)--(-1,0,0);
\draw[gray,densely dotted] (0,0,1)--(0,1,2);
\draw[gray,densely dotted] (0,0,1)--(1,0,2);
\draw[gray,densely dotted] (1/2,1/2,1)--(1,0,2);
\draw[gray,dashdotted] (-1/2,1/2,1)--(-1,0,2);
\draw (0,0,1)--(1/2,1/2,1)--(1,0,0);
\draw[dashed] (-1,0,0)--(1/2,1/2,1)--(0,1,0);
\draw (-1,0,0)--(0,0,1)--(1,0,0);
\draw (-1,0,0)--(-1/2,1/2,1)--(0,0,1);
\draw (-1/2,1/2,1)--(1/2,1/2,1);
\draw[dashed] (0,1,0)--(-1/2,1/2,1);
\node[below] at (0,0,-0.5) {$\Sk(\cV_{123})$};
\end{tikzpicture}
   % \caption*{\centering $\Sk(\cV_{123})$ has four $3$-cells: $\langle v_{13},v_1,v_2,v_3\rangle $,\\ $\langle v_{13},v_1,v_2,v_{12}\rangle , \langle v_{23},v_{13},v_2,v_3\rangle , \langle v_{23},v_{13},v_2,v_{12}\rangle $}
    %\label{fig:my_label}
\end{figure}
Then $\Sk(\cV_{123})$ consists of four $3$-cells: $\langle v_{13},v_1,v_2,v_3\rangle $, $\langle v_{13},v_1,v_2,v_{12}\rangle $, $\langle v_{23},v_{13},v_2,v_3\rangle $ and $\langle v_{23},v_{13},v_2,v_{12}\rangle $; it has only one edge in the interior, which is $\langle v_2,v_{13}\rangle $.

\item[-] The remaining $3$-dimensional simplices of the slice of the fan of $\cV_{123}$ are \vspace{-15pt} 

\begin{minipage}{0.25\textwidth}
\tdplotsetmaincoords{60}{35}
\begin{figure}[H]
    \centering
\begin{tikzpicture}[tdplot_main_coords,scale=1.8]
\node[left] at (-1,0,2) {$v_2'$};
\node[right] at (1,0,2) {$v_1'$};
\node[above] at (0,1,2) {$v_3'$};
\node[below] at (0,0,1) {$v_{12}$};
\node[right] at (1/2,1/2,1) {$v_{13}$};
\node[left] at (-1/2,1/2,1) {$v_{23}$};
\draw[gray] (-1,0,0)--(-1,0,2)--(1,0,2)--(1,0,0);
\draw[gray] (-1,0,0)--(1,0,0)--(0,1,0)--(-1,0,0);
\draw[gray] (-1,0,2)--(0,1,2)--(0,1,0);
\draw[gray] (0,1,2)--(1,0,2);
\draw[gray,densely dotted] (1,0,0)--(-1,0,2);
\draw[gray,densely dotted] (1,0,0)--(0,1,2);
\draw[gray,dotted] (0,1,2)--(-1,0,0);
\draw[gray,densely dotted] (0,0,1)--(0,1,2);
\draw[gray,densely dotted] (0,0,1)--(1,0,2);
\draw[gray,densely dotted] (1/2,1/2,1)--(1,0,2);
\draw[gray,dashdotted] (-1/2,1/2,1)--(-1,0,2);
\draw (-1,0,2)--(1,0,2)--(0,1,2)--(-1,0,2);
\draw (0,0,1)--(1/2,1/2,1)--(1,0,2)--(0,0,1)--(-1,0,2);
\draw[dashed] (-1/2,1/2,1)--(1/2,1/2,1)--(0,1,2)--(-1/2,1/2,1)--(-1,0,2);
\draw[dashed] (0,0,1)--(0,1,2);
\draw[dashed] (-1/2,1/2,1)--(1/2,1/2,1);
\draw[dashed] (0,0,1)--(-1/2,1/2,1);
%\node[below] at (0,0,-0.5) {$\Sk(\cV_{123})$};
\end{tikzpicture}    
\end{figure}
\end{minipage} \hfill
\begin{minipage} {0.2\textwidth}
\begin{align*}
&\langle v_{12},v_1',v_2',v_3'\rangle \\
&\langle v_{13},v_1',v_3',v_{12}\rangle \\
&\langle v_{23},v_{12},v_2',v_3'\rangle \\
&\langle v_{23},v_{13},v_3',v_{12}\rangle 
\end{align*}
\end{minipage} \hfill
\begin{minipage}{0.25\textwidth}
\tdplotsetmaincoords{60}{35}
\begin{figure}[H]
    \centering
\begin{tikzpicture}[tdplot_main_coords,scale=1.8]
\node[left] at (-1,0,0) {$v_2$};
\node[right] at (1,0,0) {$v_1$};
\node[below] at (0,1,0) {$v_3$};
\node[left] at (-1,0,2) {$v_2'$};
\node[right] at (1,0,2) {$v_1'$};
\node[above] at (0,1,2) {$v_3'$};
\node[below] at (0,0,1) {$v_{12}$};
\node[right] at (1/2,1/2,1) {$v_{13}$};
\node[left] at (-1/2,1/2,1) {$v_{23}$};
\draw[gray] (-1,0,0)--(-1,0,2)--(1,0,2)--(1,0,0);
\draw[gray] (-1,0,0)--(1,0,0)--(0,1,0)--(-1,0,0);
\draw[gray] (-1,0,2)--(0,1,2)--(0,1,0);
\draw[gray] (0,1,2)--(1,0,2);
\draw[gray,densely dotted] (1,0,0)--(-1,0,2);
\draw[gray,densely dotted] (1,0,0)--(0,1,2);
\draw[gray,dotted] (0,1,2)--(-1,0,0);
\draw[gray,densely dotted] (0,0,1)--(0,1,2);
\draw[gray,densely dotted] (0,0,1)--(1,0,2);
\draw[gray,densely dotted] (1/2,1/2,1)--(1,0,2);
\draw[gray,dashdotted] (-1/2,1/2,1)--(-1,0,2);
\draw (1,0,2)--(0,0,1)--(1,0,0)--(1,0,2);
\draw[dashed] (-1/2,1/2,1)--(-1,0,0);
\draw (-1,0,2)--(-1,0,0)--(0,0,1)--(-1/2,1/2,1)--(-1,0,2)--(0,0,1);
\draw[dashed] (1/2,1/2,1)--(0,0,1);
\draw[dashed] (1,0,2)--(1/2,1/2,1)--(1,0,0);
\draw (-1/2,1/2,1)--(0,1,2)--(1/2,1/2,1);
\draw[dashed] (-1/2,1/2,1)--(0,1,0)--(1/2,1/2,1);
\draw (-1/2,1/2,1)--(1/2,1/2,1);
\draw[dashed] (0,0,1)--(-1/2,1/2,1);
\draw[dashed] (0,1,0)--(0,1,2);
%\node[below] at (0,0,-0.5) {$\Sk(\cV_{123})$};
\end{tikzpicture}    
\end{figure}
\end{minipage} \hfill
\begin{minipage} {0.2\textwidth}
\begin{align*}
&\langle v_{1},v_1',v_{12},v_{13}\rangle \\
&\langle v_{2},v_2',v_{12},v_{23}\rangle \\
&\langle v_{3},v_{3}',v_{13},v_{23}\rangle 
\end{align*}
\end{minipage} \hfill

\item[-] The Berkovich retractions associated with the models $\U_{12}$, $\cV_{12}$ and $\cV_{13}$ map $v_{12}$ and $v_{13}$ to $v_1$, and $v_{23}$ to $v_2$. 

\tdplotsetmaincoords{60}{35}
\begin{figure}[H]
    \centering
\hspace{15pt}
\begin{tikzpicture}[tdplot_main_coords,scale=1.75]
\node at (0,0,-3/4) {\footnotesize $\rho_{\U_{12}}$ on $\Sk(\cV_{12})$};
\node[below] at (-1,0,0) {$v_2$};
\node[below] at (1,0,0) {$v_1$};
\node[right] at (0,1,0) {$v_3$};
\node[above] at (0,0,1) {$v_{12}$};
%\node[right] at (1/2,1/2,1) {$v_{13}$};
%\node[above] at (-1/2,1/2,1) {$v_{23}$};
\draw (-1,0,0)--(1,0,0)--(0,1,0)--(-1,0,0);
\draw[thick] (-1,0,0)--(0,0,1)--(1,0,0);
\draw[->] (0.1,0,0.9)--(1,0,0);
\draw[->] (-0.2,0,0.8)--(0.6,0,0);
\draw[->] (-0.4,0,0.6)--(0.2,0,0);
\draw[->] (-0.6,0,0.4)--(-0.2,0,0);
\end{tikzpicture}
%
%
%\hspace{3pt}
\begin{tikzpicture}[tdplot_main_coords,scale=1.75]
\node at (0,0,-3/4) {\footnotesize $\rho_{\cV_{12}}$ on $\Sk(\cV_{13})$};
\node[below] at (-1,0,0) {$v_2$};
\node[below] at (1,0,0) {$v_1$};
%\node[right] at (0,1,0) {$v_3$};
\node[above] at (0,0,1) {$v_{12}$};
\node[right] at (1/2,1/2,1) {$v_{13}$};
%\node[above] at (-1/2,1/2,1) {$v_{23}$};
\draw (-1,0,0)--(1,0,0)--(0,1,0)--(-1,0,0);
\draw[thick] (1/2,1/2,1)--(1,0,0);
\draw[thick,red] (0,0,1)--(1/2,1/2,1);
%\draw[p1] (0,0,1)--(1,0,0);
\draw[thick,dashed,orange] (-1,0,0)--(1/2,1/2,1);
\draw[thick,dashed,cyan](1/2,1/2,1)--(0,1,0);
%\draw[p2] (-1,0,0)--(1,0,0);
%\draw[p3] (0,1,0)--(1,0,0);
\draw (-1,0,0)--(0,0,1)--(1,0,0);
%\draw[->,cyan] (1/2,1/2,1)--(1,0,0);
\draw[->,cyan] (1/4,3/4,0.5)--(0.4,0.6,0.1);
\draw[->,red] (1/6,1/6,1)--(1/3,0,2/3);
\draw[->,red] (1/3,1/3,1)--(2/3,0,1/3);
\draw[->] (1/2,1/2,1)--(0.95,0.05,0.1);
\draw[->,orange] (-5/8,1/8,0.25)--(-0.5,0,0);
\draw[->,orange] (1/8,3/8,3/4)--(0.5,0,0);
\draw[->,orange] (-0.25,0.25,0.5)--(0,0,0);
\end{tikzpicture}
%\hspace{3pt}
%
%
\begin{tikzpicture}[tdplot_main_coords,scale=1.75]
\node at (0,0,-3/4) {\footnotesize $\rho_{\cV_{13}}$ on $\Sk(\cV_{123})$};
\node[below] at (-1,0,0) {$v_2$};
\node[below] at (1,0,0) {$v_1$};
%\node[right] at (0,1,0) {$v_3$};
%\node[above] at (0,0,1) {$v_{12}$};
\node[right] at (1/2,1/2,1) {$v_{13}$};
\node[above] at (-1/2,1/2,1) {$v_{23}$};
\draw (-1,0,0)--(1,0,0)--(0,1,0)--(-1,0,0);
\draw (0,0,1)--(1/2,1/2,1)--(1,0,0);
\draw[dashed] (-1,0,0)--(1/2,1/2,1)--(0,1,0);
\draw (-1,0,0)--(0,0,1)--(1,0,0);
\draw[thick] (-1,0,0)--(-1/2,1/2,1);
\draw[thick,violet](-1/2,1/2,1)--(0,0,1);
\draw[thick,orange] (-1/2,1/2,1)--(1/2,1/2,1);
\draw[thick,dashed,teal] (0,1,0)--(-1/2,1/2,1);
\draw[->,orange] (0,1/2,1)--(-1/4,1/4,1/2);
\draw[->,orange] (-1/4,1/2,1)--(-5/8,1/8,0.25);
\draw[->,orange] (1/4,1/2,1)--(1/8,3/8,3/4);
\draw[->] (-1/2,1/2,1)--(-0.95,0.05,0.1);
\draw[->,violet] (-1/4,1/4,1)--(-1/2,0,1/2);
%
%\draw[->] (0,1/2,1)--(-1/4,1/4,1/2);
\draw[->,teal] (-1/4,3/4,1/2)--(-1/2,1/2,0);
\draw[->,teal] (-1/8,7/8,1/4)--(-1/4,3/4,0);
\end{tikzpicture}
%\hspace{3pt}
%
%
  \begin{tikzpicture}[tdplot_main_coords,scale=1.75]
\tikzset{p2/.style={preaction={%But before that
			draw,orange,% Draw yellow without any arrow head
			double=orange,
			double distance=.8\pgflinewidth,
}}}
\node at (0,0,-3/4) {\footnotesize $\rho_{\cV_{12}} =\rho_{\cV_{12}} \circ \rho_{\cV_{13}}$ on $\Sk(\cV_{123})$};
\node[below] at (-1,0,0) {$v_2$};
\node[below] at (1,0,0) {$v_1$};
\node[right] at (0,1,0) {$v_3$};
%\node[above] at (0,0,1) {$v_{12}$};
\node[right] at (1/2,1/2,1) {$v_{13}$};
\node[above] at (-1/2,1/2,1) {$v_{23}$};
\draw[->,orange] (0,1/2,1)--(0,0,0);
\draw[->,orange] (-1/4,1/2,1)--(-1/2,0,0);
\draw[->,orange] (1/4,1/2,1)--(1/2,0,0);
\draw[->,magenta] (-1/4,3/4,1/2)--(-1/2,1/2,0);
\draw[->,magenta] (1/4,3/4,0.5)--(0.5,0.5,0);
\draw[->,magenta](0,2/3,2/3)--(0,1/3,0);
\draw (-1,0,0)--(1,0,0)--(0,1,0)--(-1,0,0);
\draw (0,0,1)--(1/2,1/2,1)--(1,0,0);
\draw[dashed] (-1,0,0)--(1/2,1/2,1)--(0,1,0);
\draw (-1,0,0)--(0,0,1)--(1,0,0);
\draw (-1,0,0)--(-1/2,1/2,1)--(0,0,1);
\draw[p2] (-1/2,1/2,1)--(1/2,1/2,1);
%\draw[orange,thick] (-1/4,3/4,1/2)--(1/4,3/4,1/2);
%\draw[orange,thick] (-1/2,1/2,0)--(1/2,1/2,0);
\draw[magenta,thick] (1/4,3/4,1/2) -- (0,2/3,2/3)--(-1/4,3/4,1/2);
\draw[magenta,thick]  (-1/2,1/2,0)--(0,1/3,0)--(1/2,1/2,0);
\draw[dashed] (0,1,0)--(-1/2,1/2,1);
%\draw[->] (0,1/2,1)--(-1/4,1/4,1/2)--(0,0,0);
%\draw[->] (-1/4,1/2,1)--(-5/8,1/8,0.25)--(-1/2,0,0);
%\draw[->] (1/4,1/2,1)--(1/8,3/8,3/4)--(1/2,0,0);
\draw[->] (-1/2,1/2,1)--(-0.95,0.05,0.1);
\draw[->] (1/2,1/2,1)--(0.95,0.05,0.1);
%\draw[->] (-1/8,7/8,1/4)--(-1/4,3/4,0);
%\draw[->] (1/4,3/4,0.5)--(0.4,0.6,0.1);
\fill[fill=magenta, fill opacity=0.2] (1/4,3/4,1/2) -- (0,1,0)--(-1/4,3/4,1/2)--(0,2/3,2/3);
\fill[fill=magenta, fill opacity=0.07] (0,1,0) -- (1/2,1/2,0)--(0,1/3,0)--(-1/2,1/2,0);
\fill[fill=orange, fill opacity=0.2] (0,1,0) -- (1/2,1/2,1)--(-1/2,1/2,1);
\fill[fill=orange, fill opacity=0.05] (0,1,0) -- (1,0,0)--(-1,0,0);
\end{tikzpicture}
    %\caption{Caption}
    %\label{fig:my_label}
\end{figure}
\end{itemize}

We study more in details the retraction $\rho_{\cV_{12}}$ near the vertex $v_3$, as this will be relevant later in the construction of the local combinatorial retraction (see \cref{equ:combinatorial retraction vertx}). We observe that $\rho_{\cV_{12}}$ collapses the convex hull $P(v_1,v_2,v_3,v_{13},v_{23})$ of $v_1,v_2,v_3,v_{13}$ and $v_{23}$ onto the face $\langle v_1,v_2,v_3\rangle $. If we identify the skeleton $\Sk(\cV_{123})$ with the polyhedron in $\R^3_{(x,y,z)}$ below, 
%write explicitly the retraction 
$\rho_{\cV_{12}}$ on $P(v_1,v_2,v_3,v_{13},v_{23})$ is written explicitly as follows:
\vspace{-10pt}

\tdplotsetmaincoords{60}{35}
\begin{minipage}[t]{0.55\textwidth}
\vspace{10pt}
\begin{align} \label{equ:Berk retraction vertex}
\begin{split}
   &\text{for }
   (x,y,z) \in  P(v_1,v_2,v_3,v_{13},v_{23}) \setminus \{v_3\}, \\
   %& \text{set } t=\frac{2y}{x+y+1} \\
   &\rho_{\cV_{12}} \big((x,y,z)\big)= \Big(x+\left(1-\frac{t}{2}\right)z, y+ \frac{t}{2}z,0\Big) \\
   &\text{where } t=\frac{2y}{x+y+1}. 
\end{split}
\end{align}
\end{minipage} \hspace{-60pt}
\begin{minipage}[t]{0.5\textwidth}
\begin{figure}[H]
    %\centering
    \begin{tikzpicture}[tdplot_main_coords,scale=2.5] %coordinates
\node[left] at (-1,0,0.05) {$(1,0,0)=v_2$};
\node[right] at (1,0,0) {$v_1=(0,1,0)$};
\node[right] at (0,1,0) {$v_3=(-1,0,0)$};
\node[left] at (0,0,1) {$(\frac{1}{2},\frac{1}{2},1)=v_{21}$};
\node[right] at (1/2,1/2,1) {$v_{13}=(-\frac{1}{2},\frac{1}{2},1)$};
\node[left] at (-1/2,1/2,1) {$(0,0,1)=v_{23}$};
\node[left] at(-1/2,1/2,3/2) {$z$};
\node[below] at (-3/2,-1/2,0) {$x$};
\node[below] at (7/4,-1/4,0){$y$};
\node[left] at (0,0,-0.05) {$(\frac{1}{2},\frac{1}{2},0)$};
\node[right] at (1/2,1/2,0) {$(-\frac{1}{2},\frac{1}{2},0)$};
%\node[right] at (1/4,3/4,1/2) {$(-3/4,1/4,1/2)$};
%\node[right] at (-0.15,1/2,1.08) {$(-1/4,1/4,1)$};
\node[above] at (0.2,1/2,1.5) {$(-\frac{1}{4},\frac{1}{4},1)$};
\node[below] at (1/2,-1/2,-0.1) {$(0,\frac{1}{3},0)$};
\node[above] at (1,1/2,3/2) {$(-\frac{1}{2},\frac{1}{6},\frac{2}{3})$};
\draw[densely dotted,very thin] (0,1/3,0)--(1/2,-1/2,-0.1);
\draw[densely dotted,very thin]  (0,2/3,2/3) --(1,1/2,3/2);
\draw[densely dotted,very thin]  (0,1/2,1)  --(0,1/2,1.45) ;
\filldraw (0,0,0) circle (0.2pt);
\filldraw (1/2,1/2,0) circle (0.2pt);
\filldraw (1/4,3/4,1/2) circle (0.2pt);
\filldraw (0,1/2,1) circle (0.2pt);
\filldraw (0,1/3,0) circle (0.2pt);
\filldraw (0,2/3,2/3) circle (0.2pt);
\draw[->, very thin] (-1/2,1/2,0)--(-1/2,1/2,3/2);
\draw[->, very thin] (-1/2,1/2,0)--(-3/2,-1/2,0);
\draw[->, very thin] (-1/2,1/2,0)--(7/4,-1/4,0);
\draw[gray, very thin] (0,0,0)--(0,1,0);
\draw[gray, very thin] (1,0,0)--(-1/2,1/2,0);
\draw[gray, very thin] (-1,0,0)--(1/2,1/2,0);
\draw[gray, very thin] (0,1/2,1)--(0,1,0);
\draw[gray, very thin] (-1/2,1/2,1)--(1/4,3/4,1/2);
\draw[gray, very thin] (1/2,1/2,1)--(-1/4,3/4,1/2);
\draw (-1,0,0)--(1,0,0)--(0,1,0)--(-1,0,0);
\draw (0,0,1)--(1/2,1/2,1)--(1,0,0);
\draw[dashed] (-1,0,0)--(1/2,1/2,1)--(0,1,0);
\draw (-1,0,0)--(0,0,1)--(1,0,0);
\draw (-1,0,0)--(-1/2,1/2,1)--(0,0,1);
\draw (-1/2,1/2,1)--(1/2,1/2,1);
\draw[dashed] (0,1,0)--(-1/2,1/2,1);
\end{tikzpicture}
%    \caption*{Skeleton $\Sk(\cV_{123})$ in $\R^3$}
    \label{fig:coordinates_additional_faces}
\end{figure}
\end{minipage}
%

%The function $t$ on $\langle  v_1, v_2, v_3 >$ is the slope of the line segment joining the vertex $v_3$ to $tv_1 + (1-t)v_2$ for $t \in [0,1]$.% 
We give a picture of the retraction $\rho_{\cV_{12}}$ for various values of $t$:
\begin{figure}[H]
    \centering
    \begin{tikzpicture}[tdplot_main_coords,scale=1.7] %t=0
\draw[gray, very thin] (0,0,0)--(0,1,0);
\draw[gray, very thin] (1,0,0)--(-1/2,1/2,0);
\draw[gray, very thin] (-1,0,0)--(1/2,1/2,0);
\draw[gray, very thin] (0,1/2,1)--(0,1,0);
\draw[gray, very thin] (-1/2,1/2,1)--(1/4,3/4,1/2);
\draw[gray, very thin] (1/2,1/2,1)--(-1/4,3/4,1/2);
\draw (-1,0,0)--(1,0,0)--(0,1,0)--(-1,0,0);
\draw (0,0,1)--(1/2,1/2,1)--(1,0,0);
\draw[dashed] (-1,0,0)--(1/2,1/2,1)--(0,1,0);
\draw (-1,0,0)--(0,0,1)--(1,0,0);
\draw (-1,0,0)--(-1/2,1/2,1)--(0,0,1);
\draw (-1/2,1/2,1)--(1/2,1/2,1);
\draw[dashed] (0,1,0)--(-1/2,1/2,1);
\fill[fill=teal, fill opacity=0.2]  (0,1,0) -- (-1,0,0)--(-1/2,1/2,1);
\fill[fill=teal, fill opacity=0.5]  (0,1,0) -- (-1/4,3/4,1/2)--(-1/2,1/2,0);
\draw[->,teal] (-3/8,5/8,3/4)--(-3/4,1/4,0);
\draw[->,black] (-1/4,3/4,1/2)--(-1/2,1/2,0);
\draw[->,teal] (-1/8,7/8,1/4)--(-1/4,3/4,0);
\draw[->,teal] (-1/2,1/2,1)--(-0.95,0.05,0.1);
\filldraw (-1/2,1/2,0) circle (0.2pt);
\node[below] at (0,0,-1/2) {
	%\footnotesize $(x,y,z)\mapsto
	$(x+z,y,0)$};
\node[below] at (0,0,-1) {
	%\footnotesize $(x,y,z)\mapsto
	$t=0$};
\end{tikzpicture}
\begin{tikzpicture}[tdplot_main_coords,scale=1.7] %t=1/4
\draw[gray, very thin] (0,0,0)--(0,1,0);
\draw[gray, very thin] (1,0,0)--(-1/2,1/2,0);
\draw[gray, very thin] (-1,0,0)--(1/2,1/2,0);
\draw[gray, very thin] (0,1/2,1)--(0,1,0);
\draw[gray, very thin] (-1/2,1/2,1)--(1/4,3/4,1/2);
\draw[gray, very thin] (1/2,1/2,1)--(-1/4,3/4,1/2);
\draw[->,violet] (-1/4,1/2,1)--(-1/2,0,0);
\draw[->,violet] (-1/4,1/4,1)--(-1/2,0,1/2);
%\draw[->,violet] (-1/8,3/4,1/2)--(-1/4,1/2,0);
\draw[->,black] (-1/7,5/7,4/7)--(-2/7,18/42,0);
\filldraw (-2/7,18/42,0) circle (0.2pt);
\draw[densely dotted, thick] (-1/2,1/2,0)--(-2/7,18/42,0);
\draw[densely dotted, thick] (-1/4,3/4,1/2)--(-1/7,5/7,4/7);
\draw[->,violet] (-3/16,5/8,3/4)--(-3/8,1/4,0);
\draw[->,violet] (-3/32,13/16,3/8)--(-3/16,5/8,0);
\draw[->,violet] (-1/16,7/8,1/4)--(-1/8,3/4,0);
\draw (-1,0,0)--(1,0,0)--(0,1,0)--(-1,0,0);
\draw (0,0,1)--(1/2,1/2,1)--(1,0,0);
\draw[dashed] (-1,0,0)--(1/2,1/2,1)--(0,1,0);
\draw (-1,0,0)--(0,0,1)--(1,0,0);
\draw (-1,0,0)--(-1/2,1/2,1)--(0,0,1);
\draw (-1/2,1/2,1)--(1/2,1/2,1);
\draw[dashed] (0,1,0)--(-1/2,1/2,1);
\fill[fill=violet, fill opacity=0.1] (0,1,0)-- (-1/4,1/2,1)--(-1/4,1/4,1)--(-1/2,0,1/2)--(-1/2,0,0)--(0,1,0);
\fill[fill=violet, fill opacity=0.3] (0,1,0)-- (-1/4,1/2,1)--(-1/2,0,0);
\fill[fill=violet, fill opacity=0.5]  (0,1,0) -- (-1/7,5/7,4/7)--(-2/7,18/42,0);
\node[below] at (0,0,-1/2) {
	%\footnotesize $(x,y,z)\mapsto
	$(x+\frac{7}{8}z,y+\frac{1}{8}z,0)$};
\node[below] at (0,0,-7/8) {
	%\footnotesize $(x,y,z)\mapsto
	$t=\frac{1}{4}$};
\end{tikzpicture}
\begin{tikzpicture}[tdplot_main_coords,scale=1.7] %t=1/2
\draw[gray, very thin] (0,0,0)--(0,1,0);
\draw[gray, very thin] (1,0,0)--(-1/2,1/2,0);
\draw[gray, very thin] (-1,0,0)--(1/2,1/2,0);
\draw[gray, very thin] (0,1/2,1)--(0,1,0);
\draw[gray, very thin] (-1/2,1/2,1)--(1/4,3/4,1/2);
\draw[gray, very thin] (1/2,1/2,1)--(-1/4,3/4,1/2);
\draw (-1,0,0)--(1,0,0)--(0,1,0)--(-1,0,0);
\draw (0,0,1)--(1/2,1/2,1)--(1,0,0);
\draw[dashed] (-1,0,0)--(1/2,1/2,1)--(0,1,0);
\draw (-1,0,0)--(0,0,1)--(1,0,0);
\draw (-1,0,0)--(-1/2,1/2,1)--(0,0,1);
\draw (-1/2,1/2,1)--(1/2,1/2,1);
\draw[dashed] (0,1,0)--(-1/2,1/2,1);
\fill[fill=violet, fill opacity=0.1]  (0,1,0) -- (0,1/2,1)--(0,0,1)--(0,0,0)--(0,1,0);
\fill[fill=violet, fill opacity=0.5] (0,1,0)-- (0,2/3,2/3)--(0,1/3,0);
\fill[fill=violet, fill opacity=0.3] (0,1,0) --(0,1/2,1)--(0,0,0);
\draw[->,violet] (0,1/2,1)--(0,0,0);
\draw[->,violet] (0,1/4,1)--(0,0,1/2);
%\draw[->,violet] (0,3/4,1/2)--(0,1/2,0);
\draw[->,violet] (0,7/8,1/4)--(0,3/4,0);
\draw[->,black] (0,2/3,2/3)--(0,1/3,0);
\filldraw (0,1/3,0) circle (0.2pt);
\draw[densely dotted, thick] (-1/2,1/2,0)-- (0,1/3,0);
\draw[densely dotted, thick] (-1/4,3/4,1/2)--(0,2/3,2/3);
\node[below] at (0,0,-1/2) {
	%\footnotesize $(x,y,z)\mapsto
	$(x+\frac{3}{4}z,y+\frac{1}{4}z,0)$};
\node[below] at (0,0,-7/8) {
	%\footnotesize $(x,y,z)\mapsto
	$t=\frac{1}{2}$};
\end{tikzpicture}
\begin{tikzpicture}[tdplot_main_coords,scale=1.7] %t=3/4
%\draw[->,violet] (3/16,5/8,3/4)--(3/8,1/4,0);
\draw[->,violet] (1/16,7/8,1/4)--(1/8,3/4,0);
\draw[->,violet] (1/4,1/2,1)--(1/2,0,0);
\draw[->,violet] (1/4,1/4,1)--(1/2,0,1/2);
%\draw[->,violet] (1/8,3/4,1/2)--(1/4,1/2,0);
\draw[->,black] (1/7,5/7,4/7)--(2/7,18/42,0);
\filldraw (2/7,18/42,0) circle (0.2pt);
\draw[densely dotted, thick] (-1/2,1/2,0)-- (0,1/3,0);
\draw[densely dotted, thick] (2/7,18/42,0)-- (0,1/3,0);
\draw[densely dotted, thick] (-1/4,3/4,1/2)--(0,2/3,2/3);
\draw[densely dotted, thick] (1/7,5/7,4/7)--(0,2/3,2/3);
\draw (-1,0,0)--(1,0,0)--(0,1,0)--(-1,0,0);
\draw (0,0,1)--(1/2,1/2,1)--(1,0,0);
\draw[dashed] (-1,0,0)--(1/2,1/2,1)--(0,1,0);
\draw (-1,0,0)--(0,0,1)--(1,0,0);
\draw (-1,0,0)--(-1/2,1/2,1)--(0,0,1);
\draw (-1/2,1/2,1)--(1/2,1/2,1);
\draw[dashed] (0,1,0)--(-1/2,1/2,1);
\fill[fill=violet, fill opacity=0.1] (0,1,0)-- (1/4,1/2,1)--(1/4,1/4,1)--(1/2,0,1/2)--(1/2,0,0)--(0,1,0);
\fill[fill=violet, fill opacity=0.3] (0,1,0)-- (1/4,1/2,1)--(1/2,0,0);
\fill[fill=violet, fill opacity=0.5]  (0,1,0) -- (1/7,5/7,4/7)--(2/7,18/42,0);
\draw[gray, very thin] (0,0,0)--(0,1,0);
\draw[gray, very thin] (1,0,0)--(-1/2,1/2,0);
\draw[gray, very thin] (-1,0,0)--(1/2,1/2,0);
\draw[gray, very thin] (0,1/2,1)--(0,1,0);
\draw[gray, very thin] (-1/2,1/2,1)--(1/4,3/4,1/2);
\draw[gray, very thin] (1/2,1/2,1)--(-1/4,3/4,1/2);
\node[below] at (0,0,-1/2) {
	%\footnotesize $(x,y,z)\mapsto
	$(x+\frac{5}{8}z,y+\frac{3}{8}z,0)$};
\node[below] at (0,0,-7/8) {
	%\footnotesize $(x,y,z)\mapsto
	$t=\frac{3}{4}$};
\end{tikzpicture}
\begin{tikzpicture}[tdplot_main_coords,scale=1.7] %t=1
\draw[gray, very thin] (0,0,0)--(0,1,0);
\draw[gray, very thin] (1,0,0)--(-1/2,1/2,0);
\draw[gray, very thin] (-1,0,0)--(1/2,1/2,0);
\draw[gray, very thin] (0,1/2,1)--(0,1,0);
\draw[gray, very thin] (-1/2,1/2,1)--(1/4,3/4,1/2);
\draw[gray, very thin] (1/2,1/2,1)--(-1/4,3/4,1/2);
\draw (-1,0,0)--(1,0,0)--(0,1,0)--(-1,0,0);
\draw (0,0,1)--(1/2,1/2,1)--(1,0,0);
\draw[dashed] (-1,0,0)--(1/2,1/2,1)--(0,1,0);
\draw (-1,0,0)--(0,0,1)--(1,0,0);
\draw (-1,0,0)--(-1/2,1/2,1)--(0,0,1);
\draw (-1/2,1/2,1)--(1/2,1/2,1);
\draw[dashed] (0,1,0)--(-1/2,1/2,1);
\fill[fill=cyan, fill opacity=0.2]  (0,1,0) -- (1,0,0)--(1/2,1/2,1);
\fill[fill=cyan, fill opacity=0.5]  (0,1,0) --(1/4,3/4,0.5)--(0.5,0.5,0);
\draw[->,cyan] (1/2,1/2,1)--(0.95,0.05,0.1);
\draw[->,black] (1/4,3/4,0.5)--(0.5,0.5,0);
\filldraw (0.5,0.5,0) circle (0.2pt);
\draw[densely dotted, thick] (-1/2,1/2,0)-- (0,1/3,0);
\draw[densely dotted, thick] (0.5,0.5,0)-- (0,1/3,0);
\draw[densely dotted, thick] (-1/4,3/4,1/2)--(0,2/3,2/3);
\draw[densely dotted, thick] (1/4,3/4,0.5)--(0,2/3,2/3);
\node[below] at (0,0,-1/2) {
	%\footnotesize $(x,y,z)\mapsto
	$(x+\frac{1}{2}z,y+\frac{1}{2}z,0)$};
\node[below] at (0,0,-1) {
	%\footnotesize $(x,y,z)\mapsto
	$t=1$};
\end{tikzpicture}
    %\caption{Caption}
    %\label{fig:my_label}
\end{figure}

\noindent For purposes which will be clear in the construction of the local combinatorial retraction in \cref{subsec:local_combinatorial}, we consider a further toric blow-up. Let $H_{123}: \mathscr{G} \rightarrow \cV_{123}$ be the blow-up along the disjoint toric strata $D_2 \cap E_{13}$ and $E_{12} \cap D_3'$; this yields two new components in the toric boundary, denoted by $E_{123}$ and $E'_{123}$. It follows that the slice of the fan of $\mathscr{G}$ is obtained from the slice of $\cV_{123}$ as star subdivision along the edges $\langle v_2,v_{13}\rangle $ and $\langle v_{12},v_3'\rangle $. 
%In particular, the skeleton $\Sk(\mathscr{G})$ is obtained from $\Sk(\cV_{123})$ by 
\vspace{-10pt}

\begin{minipage}{0.65\textwidth}
\noindent In particular, the skeleton $\Sk(\mathscr{G})$ is obtained from $\Sk(\cV_{123})$ by 
\begin{itemize}
\itemsep0pt
    \item[1.] the star subdivision of the edge $\langle v_2,v_{13}\rangle $, which turns the four $3$-cells of $\Sk(\cV_{123})$ into eight $3$-cells;
    \item[2.] adding an additional $3$-cell $\tau = \langle v_{12},v_{13},v_{23},v'_{123}\rangle $, where we denote by $v'_{123}$ the new vertex corresponding to $E'_{123}$.
\end{itemize}
\end{minipage} \hfill
\begin{minipage}{0.35\textwidth}
\tdplotsetmaincoords{60}{35}
\begin{figure}[H]
\centering
\begin{tikzpicture}[tdplot_main_coords,scale=1.8]
\filldraw (0,1/3,4/3) circle (0.4pt);
\filldraw (0,1/3,2/3) circle (0.4pt);
\draw[teal] (-1/2,1/2,1)--(0,1/3,4/3)--(0,0,1);
\draw[teal] (1/2,1/2,1)--(0,1/3,4/3);
\draw[teal] (0,0,1)--(0,1/3,2/3)--(0,1,0);
\draw[teal] (-1/2,1/2,1)--(0,1/3,2/3)--(1,0,0);
\node[right] at (0,1/3,4/3){$v'_{123}$};
\node[right] at (0,1/3,2/3){$v_{123}$};
\node[below] at (-1,0,0) {$v_2$};
\node[below] at (1,0,0) {$v_1$};
\node[below] at (0,1,0) {$v_3$};
\node[left] at (0,0,1) {$v_{12}$};
\node[right] at (1/2,1/2,1) {$v_{13}$};
\node[above] at (-1/2,1/2,1) {$v_{23}$};
\draw[gray] (-1,0,0)--(-1,0,2)--(1,0,2)--(1,0,0);
\draw[gray] (-1,0,2)--(0,1,2)--(0,1,0);
\draw[gray] (0,1,2)--(1,0,2);
\draw[gray,densely dotted] (1,0,0)--(-1,0,2);
\draw[gray,densely dotted] (1,0,0)--(0,1,2);
\draw[gray,dotted] (0,1,2)--(-1,0,0);
\draw (-1,0,0)--(1,0,0)--(0,1,0)--(-1,0,0);
\draw[gray,densely dotted] (0,0,1)--(0,1,2);
\draw[gray,densely dotted] (0,0,1)--(1,0,2);
\draw[gray,densely dotted] (1/2,1/2,1)--(1,0,2);
\draw[gray,dashdotted] (-1/2,1/2,1)--(-1,0,2);
\draw (0,0,1)--(1/2,1/2,1)--(1,0,0);
\draw[dashed] (-1,0,0)--(1/2,1/2,1)--(0,1,0);
\draw (-1,0,0)--(0,0,1)--(1,0,0);
\draw (-1,0,0)--(-1/2,1/2,1)--(0,0,1);
\draw (-1/2,1/2,1)--(1/2,1/2,1);
\draw[dashed] (0,1,0)--(-1/2,1/2,1);
\node[below] at (0,0,-0.3) {$\Sk(\mathscr{G})$};
\end{tikzpicture}
   % \caption*{\centering $\Sk(\cV_{123})$ has four $3$-cells: $\langle v_{13},v_1,v_2,v_3\rangle $,\\ $\langle v_{13},v_1,v_2,v_{12}\rangle , \langle v_{23},v_{13},v_2,v_3\rangle , \langle v_{23},v_{13},v_2,v_{12}\rangle $}
    %\label{fig:my_label}
\end{figure}
\end{minipage}

\noindent The diagram below summarizes the resolutions of $\U$ we constructed and studied so far:
%, we constructed resolutions of $\U$, described their associated fans and retractions

\begin{center}
    $\arraycolsep=1.6pt\def\arraystretch{1}
    \begin{array}{ccccccccc}
         %g_{ijkl}: 
         \mathscr{G}
         & \xrightarrow[H_{123}]{
            \substack{
             \text{blow-up of } \\
            D_2 \cap E_{13}, E_{12} \cap D'_3
            }
            }
         & \cV_{123}
         & \xrightarrow[H_{23} \circ H_{13} \circ H_{12}]{\substack{
             \text{blow-up of } \\
            S_{12},S_{13},S_{23}
            }}
         & \U_{12} 
         & \xrightarrow[G_{12} \circ G_1 ]{\substack{
             \text{blow-up of } \\
            D_1, D_2
            }}
         & \U. \\
         \cup & & 
         \cup & & 
         \cup & & \\
         E_{123}, E'_{123} & &
         E_{12}, E_{13}, E_{23} & &
         S_{12}, S_{13}, S_{23} & &
    \end{array}
    $
\end{center}

\subsection{Local combinatorial retraction} 
\label{subsec:local_combinatorial}
Other resolutions of $\cU$ can be obtained by blowing-up the divisors of the special fiber in a different order. Given any order $(i_1,i_2,i_3)$ on $\{1,2,3\}$, we denote 

\begin{center}
    $\arraycolsep=1.6pt\def\arraystretch{1}
    \begin{array}{ccccccccc}
         %g_{ijkl}: 
         \cV_{i_1 i_2 i_3}
         & \xrightarrow{\substack{\text{blow-up of } \\S_{i_1i_3}, S_{i_2 i_3}}}
         & \cV_{i_1 i_2}
         & \xrightarrow{\substack{\text{blow-up of }\\S_{i_1i_2}}}
         & \U_{i_1i_2}
         & \xrightarrow{\substack{\text{blow-up of }\\D_{i_1},D_{i_2}}}
         & \U. \\
         \cup & & 
         \cup & & 
         \cup & &  \\
         E_{i_1 i_3}, E_{i_2 i_3} & &
         E_{i_1 i_2} & &
         S_{i_1i_2}, S_{i_1 i_3}, S_{i_2 i_3} & &
    \end{array}
    $
\end{center}

The refinement of the fan of $\U$ corresponding to $\cV_{i_1 i_2 i_3}$ is such that the skeleton $\Sk(\cV_{i_1 i_2 i_3})=\Sk(\cV_{123})$ as subspaces in the Berkovich space of $\cU_K$; it is independent on the chosen order so that we simply denote this subspace by $\Sk(\cV)$. However, the models $\cV_{i_1 i_2 i_3}$ and $\cV_{123}$ induce in general different simplicial subdivisions and different retractions onto $\langle v_1,v_2,v_3\rangle $. For instance, the only edge in the interior of $\Sk(\cV_{i_1 i_2 i_3})$ is $\langle v_{i_2},v_{i_1 i_3}\rangle $, which indeed depends on the chosen order.
Here below we illustrate the skeletons and the Berkovich retractions in a couple of examples. 

\tdplotsetmaincoords{60}{35}
\begin{figure}[H]
    \centering
\begin{tikzpicture}[tdplot_main_coords,scale=1.8] %123
\node[left] at (-1.5,0,1) 
%{$\cV_{123} \rightarrow \cV_{12} \rightarrow \cU_{12} \rightarrow \cU$};
{$(1,2,3)$};
\node[below] at (-1,0,0) {$v_2$};
\node[below] at (1,0,0) {$v_1$};
\node[right] at (0,1,0) {$v_3$};
\node[below] at (0,0,1) {$v_{12}$};
\node[right] at (1/2,1/2,1) {$v_{13}$};
\node[above] at (-1/2,1/2,1) {$v_{23}$};
\draw[gray] (-1,0,0)--(-1,0,2)--(1,0,2)--(1,0,0);
\draw[gray] (-1,0,2)--(0,1,2)--(0,1,0);
\draw[gray] (0,1,2)--(1,0,2);
\draw[gray,densely dotted] (0,0,1)--(0,1,2);
\draw[gray,densely dotted] (1,0,0)--(-1,0,2);
\draw[gray,densely dotted] (1,0,0)--(0,1,2);
\draw[gray,dotted] (0,1,2)--(-1,0,0);
\draw (-1,0,0)--(1,0,0)--(0,1,0)--(-1,0,0);
%\draw[gray,densely dotted] (0,0,1)--(0,1,2);
%\draw[gray,densely dotted] (0,0,1)--(1,0,2);
%\draw[gray,densely dotted] (1/2,1/2,1)--(1,0,2);
%\draw[gray,dashdotted] (-1/2,1/2,1)--(-1,0,2);
\draw (0,0,1)--(1/2,1/2,1)--(1,0,0);
\draw[dashed] (-1,0,0)--(1/2,1/2,1)--(0,1,0);
\draw (-1,0,0)--(0,0,1)--(1,0,0);
\draw (-1,0,0)--(-1/2,1/2,1)--(0,0,1);
\draw (-1/2,1/2,1)--(1/2,1/2,1);
\draw[dashed] (0,1,0)--(-1/2,1/2,1);
\node[below] at (0,0,-0.3) {$\Sk(\cV_{123})$};
\end{tikzpicture}
\begin{tikzpicture}[tdplot_main_coords,scale=2] %123
\node at (0,0,-3/4) {\footnotesize $\rho_{\cU_{12}}$ on $\Sk(\cV_{12})$};
\node[below] at (-1,0,0) {$v_2$};
\node[below] at (1,0,0) {$v_1$};
\node[right] at (0,1,0) {$v_3$};
\node[above] at (0,0,1) {$v_{12}$};
\draw (-1,0,0)--(1,0,0)--(0,1,0)--(-1,0,0);
\draw[thick] (-1,0,0)--(0,0,1)--(1,0,0);
\draw[->] (0.1,0,0.9)--(1,0,0);
\draw[->] (-0.2,0,0.8)--(0.6,0,0);
\draw[->] (-0.4,0,0.6)--(0.2,0,0);
\draw[->] (-0.6,0,0.4)--(-0.2,0,0);
\end{tikzpicture}
\begin{tikzpicture}[tdplot_main_coords,scale=2] %123
\tikzset{p2/.style={preaction={%But before that
			draw,orange,% Draw yellow without any arrow head
			double=orange,
			double distance=.8\pgflinewidth,
}}}
\node at (0,0,-3/4) {\footnotesize $\rho_{\cV_{12}}$ on $\Sk(\cV_{123})$};
\node[below] at (-1,0,0) {$v_2$};
\node[below] at (1,0,0) {$v_1$};
\node[right] at (0,1,0) {$v_3$};
%\node[above] at (0,0,1) {$v_{12}$};
\node[right] at (1/2,1/2,1) {$v_{13}$};
\node[above] at (-1/2,1/2,1) {$v_{23}$};
\draw[->,orange] (0,1/2,1)--(0,0,0);
\draw[->,orange] (-1/4,1/2,1)--(-1/2,0,0);
\draw[->,orange] (1/4,1/2,1)--(1/2,0,0);
\draw[->,magenta] (-1/4,3/4,1/2)--(-1/2,1/2,0);
\draw[->,magenta] (1/4,3/4,0.5)--(0.5,0.5,0);
\draw[->,magenta](0,2/3,2/3)--(0,1/3,0);
\draw (-1,0,0)--(1,0,0)--(0,1,0)--(-1,0,0);
\draw (0,0,1)--(1/2,1/2,1)--(1,0,0);
\draw[dashed] (-1,0,0)--(1/2,1/2,1)--(0,1,0);
\draw (-1,0,0)--(0,0,1)--(1,0,0);
\draw (-1,0,0)--(-1/2,1/2,1)--(0,0,1);
\draw[p2] (-1/2,1/2,1)--(1/2,1/2,1);
%\draw[orange,thick] (-1/4,3/4,1/2)--(1/4,3/4,1/2);
%\draw[orange,thick] (-1/2,1/2,0)--(1/2,1/2,0);
\draw[magenta,thick] (1/4,3/4,1/2) -- (0,2/3,2/3)--(-1/4,3/4,1/2);
\draw[magenta,thick]  (-1/2,1/2,0)--(0,1/3,0)--(1/2,1/2,0);
\draw[dashed] (0,1,0)--(-1/2,1/2,1);
%\draw[->] (0,1/2,1)--(-1/4,1/4,1/2)--(0,0,0);
%\draw[->] (-1/4,1/2,1)--(-5/8,1/8,0.25)--(-1/2,0,0);
%\draw[->] (1/4,1/2,1)--(1/8,3/8,3/4)--(1/2,0,0);
\draw[->] (-1/2,1/2,1)--(-0.95,0.05,0.1);
\draw[->] (1/2,1/2,1)--(0.95,0.05,0.1);
%\draw[->] (-1/8,7/8,1/4)--(-1/4,3/4,0);
%\draw[->] (1/4,3/4,0.5)--(0.4,0.6,0.1);
\fill[fill=magenta, fill opacity=0.2] (1/4,3/4,1/2) -- (0,1,0)--(-1/4,3/4,1/2)--(0,2/3,2/3);
\fill[fill=magenta, fill opacity=0.07] (0,1,0) -- (1/2,1/2,0)--(0,1/3,0)--(-1/2,1/2,0);
\fill[fill=orange, fill opacity=0.2] (0,1,0) -- (1/2,1/2,1)--(-1/2,1/2,1);
\fill[fill=orange, fill opacity=0.05] (0,1,0) -- (1,0,0)--(-1,0,0);
\end{tikzpicture}
    %\caption{Caption}
    %\label{fig:my_label}
\end{figure}

\begin{figure} [H]
    \centering
\begin{tikzpicture}[tdplot_main_coords,scale=1.8] % 213
\node[left] at (-1.5,0,1) 
%{$\cV_{213} \rightarrow \cV_{21} \rightarrow \cU_{21} \rightarrow \cU$};
{$(2,1,3)$};
\node[below] at (-1,0,0) {$v_2$};
\node[below] at (1,0,0) {$v_1$};
\node[right] at (0,1,0) {$v_3$};
\node[below] at (0,0,1) {$v_{21}$};
\node[right] at (1/2,1/2,1) {$v_{13}$};
\node[above] at (-1/2,1/2,1) {$v_{23}$};
\draw[gray] (-1,0,0)--(-1,0,2)--(1,0,2)--(1,0,0);
\draw[gray] (-1,0,2)--(0,1,2)--(0,1,0);
\draw[gray] (0,1,2)--(1,0,2);
\draw[gray,densely dotted] (0,0,1)--(0,1,2);
\draw[gray,densely dotted] (-1,0,0)--(0,1,2);
\draw[gray,densely dotted] (-1,0,0)--(1,0,2);
\draw[gray,dotted] (0,1,2)--(1,0,0);
\draw (-1,0,0)--(1,0,0)--(0,1,0)--(-1,0,0);
%\draw[gray,dotted] (0,0,1)--(0,1,2);
%\draw[gray,dotted] (0,0,1)--(1,0,2);
\draw (0,0,1)--(1/2,1/2,1)--(1,0,0);
\draw[dashed] (1/2,1/2,1)--(0,1,0);
\draw[dashed] (-1/2,1/2,1)--(1,0,0);
\draw (-1,0,0)--(0,0,1)--(1,0,0);
\draw (-1,0,0)--(-1/2,1/2,1)--(0,0,1);
\draw (-1/2,1/2,1)--(1/2,1/2,1);
\draw[dashed] (0,1,0)--(-1/2,1/2,1);
\node[below] at (0,0,-0.3) {$\Sk(\cV_{213})$};
\end{tikzpicture}
\begin{tikzpicture}[tdplot_main_coords,scale=2] %213
\node at (0,0,-3/4) {\footnotesize $\rho_{\cU_{21}}$ on $\Sk(\cV_{21})$};
\node[below] at (-1,0,0) {$v_2$};
\node[below] at (1,0,0) {$v_1$};
\node[right] at (0,1,0) {$v_3$};
\node[above] at (0,0,1) {$v_{21}$};
\draw (-1,0,0)--(1,0,0)--(0,1,0)--(-1,0,0);
\draw[thick] (-1,0,0)--(0,0,1)--(1,0,0);
\draw[->] (-0.1,0,0.9)--(-1,0,0);
\draw[->] (0.2,0,0.8)--(-0.6,0,0);
\draw[->] (0.4,0,0.6)--(-0.2,0,0);
\draw[->] (0.6,0,0.4)--(0.2,0,0);
\end{tikzpicture}
\begin{tikzpicture}[tdplot_main_coords,scale=2] %213
\tikzset{p2/.style={preaction={%But before that
			draw,orange,% Draw yellow without any arrow head
			double=orange,
			double distance=.8\pgflinewidth,
}}}
\node at (0,0,-3/4) {\footnotesize $\rho_{\cV_{21}}$ on $\Sk(\cV_{213})$};
\node[below] at (-1,0,0) {$v_2$};
\node[below] at (1,0,0) {$v_1$};
\node[right] at (0,1,0) {$v_3$};
%\node[above] at (0,0,1) {$v_{12}$};
\node[right] at (1/2,1/2,1) {$v_{13}$};
\node[above] at (-1/2,1/2,1) {$v_{23}$};
\draw[->,orange] (0,1/2,1)--(0,0,0);
\draw[->,orange] (-1/4,1/2,1)--(-1/2,0,0);
\draw[->,orange] (1/4,1/2,1)--(1/2,0,0);
\draw[->,magenta] (-1/4,3/4,1/2)--(-1/2,1/2,0);
\draw[->,magenta] (1/4,3/4,0.5)--(0.5,0.5,0);
\draw[->,magenta](0,2/3,2/3)--(0,1/3,0);
\draw (-1,0,0)--(1,0,0)--(0,1,0)--(-1,0,0);
\draw (0,0,1)--(1/2,1/2,1)--(1,0,0);
\draw[dashed] (-1/2,1/2,1)--(1,0,0);
\draw[dashed] (0,1,0)--(-1/2,1/2,1);
\draw[dashed] (1/2,1/2,1)--(0,1,0);
\draw (-1,0,0)--(0,0,1)--(1,0,0);
\draw (-1,0,0)--(-1/2,1/2,1)--(0,0,1);
\draw[p2] (-1/2,1/2,1)--(1/2,1/2,1);
%\draw[orange,thick] (-1/4,3/4,1/2)--(1/4,3/4,1/2);
%\draw[orange,thick] (-1/2,1/2,0)--(1/2,1/2,0);
\draw[magenta,thick] (1/4,3/4,1/2) -- (0,2/3,2/3)--(-1/4,3/4,1/2);
\draw[magenta,thick]  (-1/2,1/2,0)--(0,1/3,0)--(1/2,1/2,0);
%\draw[->] (0,1/2,1)--(-1/4,1/4,1/2)--(0,0,0);
%\draw[->] (-1/4,1/2,1)--(-5/8,1/8,0.25)--(-1/2,0,0);
%\draw[->] (1/4,1/2,1)--(1/8,3/8,3/4)--(1/2,0,0);
\draw[->] (-1/2,1/2,1)--(-0.95,0.05,0.1);
\draw[->] (1/2,1/2,1)--(0.95,0.05,0.1);
%\draw[->] (-1/8,7/8,1/4)--(-1/4,3/4,0);
%\draw[->] (1/4,3/4,0.5)--(0.4,0.6,0.1);
\fill[fill=magenta, fill opacity=0.2] (1/4,3/4,1/2) -- (0,1,0)--(-1/4,3/4,1/2)--(0,2/3,2/3);
\fill[fill=magenta, fill opacity=0.07] (0,1,0) -- (1/2,1/2,0)--(0,1/3,0)--(-1/2,1/2,0);
\fill[fill=orange, fill opacity=0.2] (0,1,0) -- (1/2,1/2,1)--(-1/2,1/2,1);
\fill[fill=orange, fill opacity=0.05] (0,1,0) -- (1,0,0)--(-1,0,0);
\end{tikzpicture}
    %\caption{Caption}
    %\label{fig:my_label}
\end{figure}

\begin{figure} [H]
    \centering
\begin{tikzpicture}[tdplot_main_coords,scale=1.8] % 132
\node[left] at (-1.5,0,1) 
%{$\cV_{132} \rightarrow \cV_{13} \rightarrow \cU_{13} \rightarrow \cU$};
{$(1,3,2)$};
\node[below] at (-1,0,0) {$v_2$};
\node[below] at (1,0,0) {$v_1$};
\node[right] at (0,1,0) {$v_3$};
\node[below] at (0,0,1) {$v_{12}$};
\node[above] at (1/2,1/2,1) {$v_{13}$};
\node[above] at (-1/2,1/2,1) {$v_{32}$};
\draw[gray] (-1,0,0)--(-1,0,2)--(1,0,2)--(1,0,0);
\draw[gray] (-1,0,2)--(0,1,2)--(0,1,0);
\draw[gray] (0,1,2)--(1,0,2);
\draw[gray,densely dotted] (1,0,0)--(0,1,2);
\draw[gray,densely dotted] (1,0,0)--(-1,0,2);
\draw[gray,densely dotted] (1/2,1/2,1)--(-1,0,2);
\draw[gray,dotted] (0,1,0)--(-1,0,2);
\draw (-1,0,0)--(1,0,0)--(0,1,0)--(-1,0,0);
%\draw[gray,dotted] (0,0,1)--(0,1,2);
%\draw[gray,dotted] (0,0,1)--(1,0,2);
\draw (0,0,1)--(1/2,1/2,1)--(1,0,0);
\draw[dashed] (1/2,1/2,1)--(0,1,0);
\draw[dashed] (0,1,0)--(0,0,1);
\draw (-1,0,0)--(0,0,1)--(1,0,0);
\draw (-1,0,0)--(-1/2,1/2,1)--(0,0,1);
\draw (-1/2,1/2,1)--(1/2,1/2,1);
\draw[dashed] (0,1,0)--(-1/2,1/2,1);
\node[below] at (0,0,-0.3) {$\Sk(\cV_{132})$};
\end{tikzpicture}
\begin{tikzpicture}[tdplot_main_coords,scale=2] %132
\node at (0,0,-3/4) {\footnotesize $\rho_{\cU_{13}}$ on $\Sk(\cV_{13})$};
\node[below] at (-1,0,0) {$v_2$};
\node[below] at (1,0,0) {$v_1$};
\node[left] at (0,1,0) {$v_3$};
\node[right] at (1/2,1/2,1) {$v_{13}$};
\draw (-1,0,0)--(1,0,0)--(0,1,0)--(-1,0,0);
\draw[thick] (1,0,0)--(1/2,1/2,1)--(0,1,0);
\draw[->] (1/2,1/2,1)--(0.95,0.05,0.1);
\draw[->] (1/4,3/4,0.5)--(0.5,0.5,0);
\end{tikzpicture}
\begin{tikzpicture}[tdplot_main_coords,scale=2] %132
\tikzset{p2/.style={preaction={%But before that
			draw,orange,% Draw yellow without any arrow head
			double=orange,
			double distance=.8\pgflinewidth,
}}}
\node at (0,0,-3/4) {\footnotesize $\rho_{\cV_{13}}$ on $\Sk(\cV_{132})$};
\node[below] at (-1,0,0) {$v_2$};
\node[below] at (1,0,0) {$v_1$};
\node[right] at (0,1,0) {$v_3$};
%\node[above] at (0,0,1) {$v_{12}$};
\node[right] at (1/2,1/2,1) {$v_{13}$};
\node[above] at (-1/2,1/2,1) {$v_{32}$};
\draw[->,orange] (-1/8,1/8,1)--(3/4,1/4,0);
\draw[->,orange] (-1/4,1/4,1)--(1/2,1/2,0);
\draw[->,orange] (-3/8,3/8,1)--(1/4,3/4,0);
\draw[->,magenta] (-3/4,1/4,1/2)--(-1/2,1/2,0);
\draw[->,magenta] (-1/2,0,0.5)--(0,0,0);
\draw[->,magenta](-1/2,1/6,2/3)--(0,1/3,0);
\draw (-1,0,0)--(1,0,0)--(0,1,0)--(-1,0,0);
\draw (0,0,1)--(1/2,1/2,1)--(1,0,0);
\draw[dashed] (-1/2,1/2,1)--(1,0,0);
\draw[dashed] (0,1,0)--(-1/2,1/2,1);
\draw[dashed] (1/2,1/2,1)--(0,1,0);
\draw (-1,0,0)--(0,0,1)--(1,0,0);
\draw (-1,0,0)--(-1/2,1/2,1)--(0,0,1);
\draw[p2] (-1/2,1/2,1)--(0,0,1);
\draw (-1/2,1/2,1)--(1/2,1/2,1);
%\draw[orange,thick] (-1/4,3/4,1/2)--(1/4,3/4,1/2);
%\draw[orange,thick] (-1/2,1/2,0)--(1/2,1/2,0);
\draw[magenta,thick] (-3/4,1/4,1/2) -- (-1/2,1/6,2/3)--(-1/2,0,1/2);
\draw[magenta,thick]  (-1/2,1/2,0)--(0,1/3,0)--(0,0,0);
%\draw[->] (0,1/2,1)--(-1/4,1/4,1/2)--(0,0,0);
%\draw[->] (-1/4,1/2,1)--(-5/8,1/8,0.25)--(-1/2,0,0);
%\draw[->] (1/4,1/2,1)--(1/8,3/8,3/4)--(1/2,0,0);
\draw[->] (-1/2,1/2,1)--(-0.95,0.05,0.1);
\draw[->] (1/2,1/2,1)--(0.95,0.05,0.1);
%\draw[->] (-1/8,7/8,1/4)--(-1/4,3/4,0);
%\draw[->] (1/4,3/4,0.5)--(0.4,0.6,0.1);
\fill[fill=magenta, fill opacity=0.2] (-3/4,1/4,1/2) -- (-1/2,1/6,2/3)--(-1/2,0,1/2)--(-1,0,0);
\fill[fill=magenta, fill opacity=0.07] (-1,0,0) -- (-1/2,1/2,0)--(0,1/3,0)--(0,0,0);
\fill[fill=orange, fill opacity=0.2] (-1,0,0) -- (0,0,1)--(-1/2,1/2,1);
\fill[fill=orange, fill opacity=0.05] (0,1,0) -- (1,0,0)--(-1,0,0);
\end{tikzpicture}
    %\caption{Caption}
    %\label{fig:my_label}
\end{figure}

The blow-up of $\cV_{i_1 i_2 i_3}$ along the toric strata $D_{i_2} \cap E_{i_1 i_3}$ and $E_{i_1 i_2} \cap D_{i_3}'$ yields a refinement of the fan which coincides with the fan of $\mathscr{G}$, constructed at the end of \cref{subsec:local dominating model}. It follows that the model $\mathscr{G}$ dominates all resolutions $\cV_{i_1 i_2 i_3}$ independently on the order, hence all Berkovich retractions $\rho_{\cV_{i_1 i_2 i_3}}$, $\rho_{\cV_{i_1 i_2}}$ and $\rho_{\U_{i_1 i_2}}$ factors through $\rho_\mathscr{G}$.
\vspace{10pt}

Our goal is to construct a map $\pi$, composing the Berkovich retraction $\rho_\mathscr{G}$ with a collapse $\kappa$ of the additional 3-cell $\tau$ and a combinatorial retraction $\rho$

\begin{center}
    $\arraycolsep=1.6pt\def\arraystretch{1}
    \begin{array}{ccccccccc}
         & \pi: 
         & \U_K^{\an}
         & \xrightarrow{\rho_\mathscr{G}}
         & \Sk(\mathscr{G})
         & \xrightarrow[\text{collapse}]{\kappa}
         & \Sk(\cV)
         & \xrightarrow[\text{retraction}]{\rho}
         & \Sk(\U)
         \\
         & \rotatebox[origin=c]{270}{$=$} 
         & \cup & &
         \cup & & 
         \cup & & \cup \\
         \text{over }\Star(v_j)' \text{\hspace{10pt}}
         & 
         %\pi_|
         %{|{\pi^{-1}(U(v_j))}}=
         \rho_{\U_{i_1 i_2}}:
         & \pi^{-1}(\Star(v_j)') %\U_K^{\an}
         & \xrightarrow{\rho_\mathscr{G}}
         & \Sk(\mathscr{G})
         & \xrightarrow{\rho_{\cV_{i_1 i_2 j}}}
         & \Sk(\cV_{i_1 i_2 j}) 
         & \xrightarrow{\rho_{\U_{i_1 i_2}}}
         & \Star(v_j)' 
    \end{array}
    $
\end{center}
such that, given any vertex $v_j$ in $\Sk(\U)$,
%and a neighborhood $U(v_j)$ of $v_j$, 
the restriction of $\pi$ over $\Star(v_j)'$ (the $\Star$ is taken with respect to the first barycentric subdivision, as in \cref{defn:disc in simplex}) is $\rho_{\U_{i_1 i_2}}$ for any order $(i_1,i_2,j)$ on $\{1,2,3\}$, i.e. any order where the index $j$ is the biggest. This guarantees that around each $v_j$, the map $\pi$ is the Berkovich retraction induced by a small resolution $\U_{i_1 i_2}$ where the strict transform of $D_j$ is isomorphic to $D_j$, so that we are in the set-up of \cref{intro:cor}.

\begin{itemize}
\item[-] The retraction $\rho$. We identify again the skeleton $\Sk(\cV)$ with the polyhedron in $\mathbb{R}^3$ described in \cref{subsec:local dominating model}. On the convex hull $P$ of $v_{23}, (-1/4,1/4,1), (0,1/3,1),(0,0,0),v_3$ and $(0,1/3,0)$, the retraction $\rho$ is given as follows
    
\begin{minipage}[t]{0.8 \textwidth}
\begin{align} \label{equ:combinatorial retraction vertx}
\begin{split}
&(x,y,z) \in  P \mapsto 
%\begin{equation} \label{equ:local_combinatorial}
%\pi''((x,y,z))=
\begin{cases}
\Big(x+\left(1-\frac{t}{2}\right)z, y+ \frac{t}{2}z,0\Big) & \text{ if } x+\left(1-\frac{t}{2}\right)z \leqslant 0 \\
\Big(0, \frac{y}{x+1},0\Big) & \text{ if } x+\left(1-\frac{t}{2}\right)z \geqslant 0
\end{cases}\\
&\text{where }t=\frac{2y}{x+y+1}.
\end{split}
\end{align}
Here is a pictorial description for certain values of $t$:
\tdplotsetmaincoords{60}{35}
\begin{figure} [H]
%    \centering
   \begin{tikzpicture}[tdplot_main_coords,scale=1.8] %t=0
\draw[gray, very thin] (0,0,0)--(0,1/3,0);
\draw[gray, very thin] (0,1/3,0)--(-1/2,1/2,0);
\draw[gray, very thin] (0,1/3,0)--(1/2,1/2,0);
\draw[gray, very thin] (0,0,1)--(0,1/3,1);
\draw[gray, very thin] (0,1/3,1)--(1/2,1/2,1);
\draw[gray, very thin] (0,1/3,1)--(-1/2,1/2,1);
\draw[gray, very thin] (0,2/3,2/3)--(1/4,3/4,1/2);
\draw[gray, very thin] (0,2/3,2/3)--(-1/4,3/4,1/2);
\draw (-1,0,0)--(1,0,0)--(0,1,0)--(-1,0,0);
\draw (0,0,1)--(1/2,1/2,1)--(1,0,0);
\draw[dashed] (1/2,1/2,1)--(0,1,0);
\draw (-1,0,0)--(0,0,1)--(1,0,0);
\draw (-1,0,0)--(-1/2,1/2,1)--(0,0,1);
\draw (-1/2,1/2,1)--(1/2,1/2,1);
\draw[dashed] (0,1,0)--(-1/2,1/2,1);
\fill[fill=yellow, fill opacity=0.2]  (-1/4,3/4,1/2) -- (-1/2,1/2,0)--(-1/2,1/2,1);
\fill[fill=violet, fill opacity=0.2]  (0,1,0) -- (-1/4,3/4,1/2)--(-1/2,1/2,0);
\draw[black] (-3/8,5/8,3/4)--(-1/2,1/2,0);
\draw[->,black] (-1/4,3/4,1/2)--(-1/2,1/2,0);
\draw[->,violet] (-1/8,7/8,1/4)--(-1/4,3/4,0);
\draw[black] (-1/2,1/2,1)--(-1/2,1/2,0);
\filldraw (-1/2,1/2,0) circle (0.2pt);
\node[below] at (0,0,-1/2) {
	%\footnotesize $(x,y,z)\mapsto
	$t=0$};
\end{tikzpicture}
%\hspace{10pt}
%
\begin{tikzpicture}[tdplot_main_coords,scale=1.8] %t=1/4
\draw[gray, very thin] (0,0,0)--(0,1/3,0);
\draw[gray, very thin] (0,1/3,0)--(-1/2,1/2,0);
\draw[gray, very thin] (0,1/3,0)--(1/2,1/2,0);
\draw[gray, very thin] (0,0,1)--(0,1/3,1);
\draw[gray, very thin] (0,1/3,1)--(1/2,1/2,1);
\draw[gray, very thin] (0,1/3,1)--(-1/2,1/2,1);
\draw[gray, very thin] (0,2/3,2/3)--(1/4,3/4,1/2);
\draw[gray, very thin] (0,2/3,2/3)--(-1/4,3/4,1/2);
\draw[black] (-1/4,1/2,1)--(-2/7,18/42,0);
\draw[black] (-2/7,18/42,1)--(-2/7,18/42,0.05);
\draw[->,black] (-1/7,5/7,4/7)--(-2/7,18/42,0); %black
\filldraw (-2/7,18/42,0) circle (0.2pt);
\draw[black] (-3/16,5/8,3/4)--(-2/7,18/42,0);
\draw[->,violet] (-3/32,13/16,3/8)--(-3/16,5/8,0);
\draw[->,violet] (-1/16,7/8,1/4)--(-1/8,3/4,0);
\draw (-1,0,0)--(1,0,0)--(0,1,0)--(-1,0,0);
\draw (0,0,1)--(1/2,1/2,1)--(1,0,0);
\draw[dashed] (1/2,1/2,1)--(0,1,0);
\draw (-1,0,0)--(0,0,1)--(1,0,0);
\draw (-1,0,0)--(-1/2,1/2,1)--(0,0,1);
\draw (-1/2,1/2,1)--(1/2,1/2,1);
\draw[dashed] (0,1,0)--(-1/2,1/2,1);
\fill[fill=yellow, fill opacity=0.2] (-1/7,5/7,4/7)--(-1/4,1/2,1)--(-2/7,18/42,1)--(-2/7,18/42,0)--(-2/7,18/42,0);
\fill[fill=violet, fill opacity=0.2] (0,1,0)--(-1/7,5/7,4/7)--(-2/7,18/42,0);
\node[below] at (0,0,-1/2) {
	%\footnotesize $(x,y,z)\mapsto
	$t=\frac{1}{4}$};
\end{tikzpicture}
%\hspace{10pt}
%
%
\begin{tikzpicture}[tdplot_main_coords,scale=1.8] %t=1/2
\draw[gray, very thin] (0,0,0)--(0,1/3,0);
\draw[gray, very thin] (0,1/3,0)--(-1/2,1/2,0);
\draw[gray, very thin] (0,1/3,0)--(1/2,1/2,0);
\draw[gray, very thin] (0,0,1)--(0,1/3,1);
\draw[gray, very thin] (0,1/3,1)--(1/2,1/2,1);
\draw[gray, very thin] (0,1/3,1)--(-1/2,1/2,1);
\draw[gray, very thin] (0,2/3,2/3)--(1/4,3/4,1/2);
\draw[gray, very thin] (0,2/3,2/3)--(-1/4,3/4,1/2);
\draw (-1,0,0)--(1,0,0)--(0,1,0)--(-1,0,0);
\draw (0,0,1)--(1/2,1/2,1)--(1,0,0);
\draw[dashed] (1/2,1/2,1)--(0,1,0);
\draw (-1,0,0)--(0,0,1)--(1,0,0);
\draw (-1,0,0)--(-1/2,1/2,1)--(0,0,1);
\draw (-1/2,1/2,1)--(1/2,1/2,1);
\draw[dashed] (0,1,0)--(-1/2,1/2,1);
\fill[fill=yellow, fill opacity=0.2]  (0,2/3,2/3)--(0,1/3,0) --(0,1/3,1)-- (0,1/2,1)-- (0,2/3,2/3);
\fill[fill=violet, fill opacity=0.2] (0,1,0)-- (0,2/3,2/3)--(0,1/3,0);
\draw[black] (0,1/2,1)--(0,1/3,0);
\draw (0,1/3,1)--(0,1/3,0);
\draw[->,violet] (0,7/8,1/4)--(0,3/4,0);
\draw[->,black] (0,2/3,2/3)--(0,1/3,0);
\node[below] at (0,0,-1/2) {
	%\footnotesize $(x,y,z)\mapsto
	$t=\frac{1}{2}$};
\end{tikzpicture}
    %\caption{Caption}
    %\label{fig:my_label}
\end{figure}
\end{minipage}
\hspace{-120pt}
\begin{minipage}[t]{0.4\textwidth}
 \tdplotsetmaincoords{60}{35}
\begin{figure} [H]
    \centering
\begin{tikzpicture}[tdplot_main_coords,scale=2.5] %t=1/4
\tikzset{p2/.style={preaction={%But before that
			draw,red,% Draw yellow without any arrow head
			double=red,
			double distance=.4\pgflinewidth,
}}}
\node[left] at (-1/4,0,-1/4) {$(0,0,0)$};
\draw[densely dotted,very thin] (-1/4,0,-1/4) --(-1/2,1/2,0);
\draw[densely dotted,very thin] (-1/4,1,5/4) --(0,1/2,1);
\draw[densely dotted,very thin] (0,1/3,0)--(1/2,-1/2,-0.1);
\draw[densely dotted,very thin] (-1/2,4/3,3/2) --(0,1/3,1);
\draw[densely dotted,very thin] (-1/2,1/2,1.3) --(0,1/3,4/3);
\draw[densely dotted,very thin] (0,1,0) --(3/8,1-3/4,-3/8);
\node[right] at (-2/3,4/3,1.6) {$(0,\frac{1}{3},1)$};
\node[right] at (1/6,1-3/4,-4/8) {$v_3=(-1,0,0)$};
\node[left] at (-1/2,1/2,1) {$(0,0,1)=v_{23}$};
\node[left] at  (-1/2,1/2,1.3) {$(0,\frac{1}{3},\frac{4}{3})=v_{123}'$};
\node[right] at (0,1/2,1.6) {$(-\frac{1}{4},\frac{1}{4},1)$};
\node[below] at (1/2,-1/2,-0.1) {$(0,\frac{1}{3},0)$};
\draw[teal] (-1/2,1/2,1)--(0,1/3,4/3)--(0,0,1);
\draw[teal] (1/2,1/2,1)--(0,1/3,4/3);
\draw[gray, very thin] (0,0,0)--(0,1/3,0);
\draw[gray, very thin] (1,0,0)--(-1/2,1/2,0);
\draw[gray, very thin] (-1,0,0)--(1/2,1/2,0);
\draw[gray, very thin] (0,0,1)--(0,1/3,1);
\draw[gray, very thin] (-1/4,1/4,1)--(1/2,1/2,1);
\draw[gray, very thin] (1/4,1/4,1)--(-1/2,1/2,1);
\draw (-1,0,0)--(1,0,0)--(0,1,0)--(-1,0,0);
\draw (0,0,1)--(1/2,1/2,1)--(1,0,0);
\draw[dashed] (1/2,1/2,1)--(0,1,0);
\draw (-1,0,0)--(0,0,1)--(1,0,0);
\draw (-1,0,0)--(-1/2,1/2,1)--(0,0,1);
\draw (-1/2,1/2,1)--(1/2,1/2,1);
\draw[dashed] (0,1,0)--(-1/2,1/2,1);
\fill[fill=red, fill opacity=0.2] (-1/2,1/2,1) --(-1/2,1/2,0)--(0,1/3,0)--(0,1,0)--(0,1/2,1); 
\fill[fill=teal, fill opacity=0.2] (-1/2,1/2,1)--(0,1/3,4/3)--(1/2,1/2,1)--(0,0,1); 
\draw[p2]  (-1/2,1/2,1) --(-1/2,1/2,0);
\draw[p2]  (0,1/3,1) --(0,1/3,0);
\draw[p2]  (0,1/3,1) --(0,1/2,1);
\draw[p2]  (0,1/3,0) --(0,1,0);
\draw[p2]  (0,1/2,1) --(0,1,0);
\draw[p2]  (0,1/2,1) --(-1/2,1/2,1);
\draw[p2]  (0,1,0) --(-1/2,1/2,0);
\draw[p2]  (-1/2,1/2,1) --(0,1/3,1);
\draw[p2]  (-1/2,1/2,0) --(0,1/3,0);
\filldraw (-1/2,1/2,1) circle (0.3pt);
\filldraw (0,1/3,4/3) circle (0.3pt);
\filldraw (-1/2,1/2,0) circle (0.3pt);
\filldraw (0,1,0) circle (0.3pt);
\filldraw (0,1/2,1) circle (0.3pt);
\filldraw (0,1/3,0) circle (0.3pt);
\filldraw (0,1/3,1) circle (0.3pt);
\end{tikzpicture}
    %\caption{Caption}
    %\label{fig:my_label}
\end{figure}   
\end{minipage}

We extend the definition of $\rho$ to $\Sk(\cV)$ by symmetry along the medians of the triangles $\langle v_1,v_2,v_3\rangle $ and $\langle v_{12},v_{13},v_{23}\rangle $. In particular, we note that the image of $\langle v_{12},v_{13},v_{23}\rangle $ is the graph in $\langle v_1,v_2,v_3\rangle $ of \cref{defn:disc in simplex}.
%, which we denote by $\Gamma_{123}$.
 
\item[-] The combinatorial retraction $\pi'$. We define the collapse $\kappa$ as the projection of the additional $3$-cell $\tau$ of $\Sk(\mathscr{G})$ onto $\langle v_{12},v_{13},v_{23}\rangle $ along the $z$-direction. We call $\pi' \coloneqq \rho \circ \kappa$ the combinatorial retraction of the skeleton $\Sk(\mathscr{G})$ onto $\Sk(\U)=\langle v_1,v_2,v_3\rangle $.

\item[-] Finally, we check that $\pi'= \rho_{\U_{i_1 i_2}}$ over $\Star(v_j)'$. As the preimage of $\Star(v_j)'$ is disjoint from $\langle v_{12},v_{13},v_{23},v'_{123}\rangle $, we have to prove that $\rho= \rho_{\U_{i_1 i_2}}$. By symmetry of $\rho$, it is enough to check this for $v_3$. Over $\Star(v_3)'$ we have $\rho_{\U_{i_1 i_2}} = \rho_{\cV_{i_1 i_2}}$; there, the expression of $\rho_{\cV_{i_1 i_2}}$ determined in \cref{equ:Berk retraction vertex} coincides with the definition of $\rho$ in \cref{equ:combinatorial retraction vertx}, hence we conclude.
\end{itemize}

\subsection{Good minimal dlt models $\X_{ijkl}$} \label{subsec:min model Xijkl}
We return to the setting of \cref{subsec:setting quintic}. The purpose of this section is to compute various intersection numbers on the small resolutions of $\X$ used to define the retraction $\pi$, as this will allow us to compute the monodromy of the associated $\Z$-affine structure on $\Sk(X)$. 
\\Fix an order $(i,j,k,l,h)$ on $\{1,\ldots,5\}$ and consider the small resolution $\cX_{ijkl}$, obtained from $\X$ by blowing-up the divisors $D_i$, $D_j$, $D_k$, $D_l$ in that order. It now follows from the local study of the singularities of $\X$ that this is indeed a small resolution of $\X$. 
\\In $\X_{ijkl}$ we still denote the strict transforms of the strata of $\X_k$ by $D_m$, by $D_{mm'}=D_m \cap D_{m'}$ and by $D_{mm'm''}=D_m \cap D_{m'} \cap D_{m''}$ with $m,m',m'' \in \{1,\ldots,5\}$. By the study of the local model in \cref{subsec:local_model_quintic}, the exceptional locus of $g_{ijkl}: \X_{ijkl} \rightarrow \X$
\begin{center}
    $\arraycolsep=1.6pt\def\arraystretch{1}
    \begin{array}{cccccccccc}
         g_{ijkl}:
         &\X_{ijkl} %= \Bl_{D_l}\cZ_{ijk}
         & \xrightarrow[G_{ijkl}]{\substack{\text{blow-up}\\\text{of }D_l}}
         & \X_{ijk} %= \Bl_{D_k}\cZ_{ij}
         & \xrightarrow[G_{ijk}]{\substack{\text{blow-up}\\\text{of }D_k}}
         & \X_{ij} %= \Bl_{D_j}\cZ_i
         & \xrightarrow[G_{ij}]{\substack{\text{blow-up}\\\text{of }D_j}}
         & \X_i %= \Bl_{D_i}\cZ 
         & \xrightarrow[G_{i}]{\substack{\text{blow-up}\\\text{of }D_i}}
         & \cX \\
         & \cup & & 
         \cup & & 
         \cup & & 
         \cup & & \\
         & S_{lh} & &
         S_{kl},S_{kh} & &
         S_{jk},S_{jl},S_{jh} & &
         S_{ij},S_{ik},S_{il},S_{ih} & &
    \end{array}
    $
\end{center}
consists of ten surfaces $S_{mm'}$, with $m,m' \in \{1,\ldots,5\}$ and $m < m'$ in the order $(i,j,k,l,h)$. The surface $S_{mm'}$ is mapped via $g_{ijkl}$ to the singular curve $C_{mm'}$, and is contained in the strict transform of $D_m$. The component $D_h$ (corresponding to the biggest index in the chosen order) is the only one isomorphic to its strict transform.

Given a pair $(m,m')$ with $m<m'$, the morphism $g_{ijkl}$ induces on $D_{mm'}$ the blow-up along $5$ distinct general points on each $D_{mm'm''}$ with $m'<m''$. Thus, the intersection numbers between strata curves and strata divisors in $\X_{ijkl}$ are:
\begin{center}
\begin{tabular}{c|ccccc } 
 &
 $D_{i}$ & $D_{j}$ & $D_{k}$ & $D_{l}$ & $D_{h}$ \\
 \hline
 $D_{ijk}$ & 
 1 & 1 & -4 & 1 & 1  \\
 $D_{ijl}$ & 
 1 & 1 & 1 & -4 & 1  \\
 $D_{ijh}$ & 
 1 & 1 & 1 & 1 & -4  \\
 $D_{ikl}$ & 
 1 & 1 & 1 & -4 & 1 \\
 $D_{ikh}$ & 
 1 & 1 & 1 & 1 & -4 \\
 $D_{ilh}$ & 
 1 & 1 & 1 & 1 & -4 \\
 $D_{jkl}$ & 
 1 & 1 & 1 & -4 & 1 \\
 $D_{jkh}$ & 
 1 & 1 & 1 & 1 & -4 \\
 $D_{jlh}$ & 
 1 & 1 & 1 & 1 & -4 \\
 $D_{klh}$ & 
 1 & 1 & 1 & 1 & -4 
\end{tabular}% with $i<j<k<l<h$.
\end{center}

\subsection{Dominating model and combinatorial retraction} \label{subsec:combinatorial retraction}
We consider the blow-up $h_{ijkl}:\cW_{ijkl} \rightarrow \X_{ijkl}$ of the surfaces $S_{mm'}$ in lexicographical order with respect to $(i,j,k,l,h)$; we denote by $E_{mm'}$ the corresponding exceptional divisors. The skeleton $\Sk(\cW_{ijkl})$ consists of the union of the skeleton $\Sk(X)$ with four additional $3$-cells for each 2-dimensional face $\langle v_m,v_{m'},v_{m''}\rangle $ of $\Sk(X)$: for each ordered triple $m<m'<m''$, the union of the additional cells is isomorphic to $\Sk(\cV_{123})$ in \cref{subsec:local dominating model}, where we identify $v_{1}=v_m$, $v_{2}=v_{m'}$ and $v_{3}=v_{m''}$. The retraction $\rho_{\X_{ijkl}}$ collapses the additional faces onto $\langle v_m,v_{m'},v_{m''}\rangle $ as $\rho_{\cU_{12}} \circ \rho_{\cV_{12}}$. 

\tdplotsetmaincoords{60}{35}
\begin{minipage}{0.6\textwidth}
\centering Additional $3$-cells of $\Sk(\cW_{ijkl})$ over $\langle v_m,v_{m'},v_{m''}\rangle $\\with a pictorial description of the retraction $\rho_{\cX_{ijkl}}$
\end{minipage}
\begin{minipage}{0.4\textwidth}
\begin{figure} [H]
    \centering
\begin{tikzpicture}[tdplot_main_coords,scale=2]
\node[below] at (-1,0,0) {$v_{m'}$};
\node[below] at (1,0,0) {$v_m$};
\node[right] at (0,1,0) {$v_{m''}$};
\node[below] at (0,0,1) {$v_{mm'}$};
\node[right] at (1/2,1/2,1) {$v_{mm''}$};
\node[above] at (-1/2,1/2,1) {$v_{m'm''}$};
\draw[->] (0.1,0,0.9)--(0.9,0,0.1);
\draw[->] (1/2,1/2,1)--(0.95,0.05,0.1);
\draw (-1,0,0)--(1,0,0)--(0,1,0)--(-1,0,0);
\draw (0,0,1)--(1/2,1/2,1)--(1,0,0);
\draw[dashed] (-1,0,0)--(1/2,1/2,1)--(0,1,0);
\draw (-1,0,0)--(0,0,1)--(1,0,0);
\draw (-1,0,0)--(-1/2,1/2,1);
\draw(-1/2,1/2,1)--(0,0,1);
\draw (-1/2,1/2,1)--(1/2,1/2,1);
\draw[dashed] (0,1,0)--(-1/2,1/2,1);
\draw[->] (-1/2,1/2,1)--(-0.95,0.05,0.1);
\draw[->] (-1/4,1/4,1)--(-1/2,0,1/2);
\draw[->] (1/4,3/4,0.5)--(0.4,0.6,0.1);
\draw[->] (1/3,1/3,1)--(2/3,0,1/3);
\draw[->] (-1/4,3/4,1/2)--(-1/2,1/2,0);
\draw[->] (-1/8,7/8,1/4)--(-1/4,3/4,0);
\draw[->] (0.1,0,0.9)--(1,0,0);
\draw[->] (-0.3,0,0.7)--(0.4,0,0);
\draw[->] (-0.5,0,0.5)--(0,0,0);
%\draw[->] (-0.1,0,0.9)--(0.8,0,0);
\end{tikzpicture}
%\caption*{
%\substack{
%\centering Additional $3$-cells of $\Sk(\cW_{ijkl})$ over $\langle v_m,v_{m'},v_{m''}>$\\with a pictorial description of the retraction $\rho_{\cX_{ijkl}}$
%}
%}
    %\label{fig:my_label}
\end{figure}
\end{minipage}

Given another order $(i',j',k',l',h')$ on $\{1,2,3,4,5\}$, the skeleton $\Sk(\cW_{i'j'k'l'})$ coincides with $\Sk(\cW_{ijkl})$ as subspace of $X^{\an}$; we denote this simply by $\Sk(\cW)$. Instead, the triangulation and the retraction depend on the order. Our goal is therefore to construct a model $\cZ$ which dominates all models $\cW_{ijkl}$ regardless of the order, so that all retractions $\rho_{\cW_{ijkl}}$ factors through $\rho_\cZ$.

Along the same lines of \cref{subsec:local dominating model}, we define $\cZ$ as the blow-up of $\cW_{ijkl}$ along $D_{m'} \cap E_{mm''}$ and $E_{mm'} \cap D_{m''}'$, for all ordered triples $m<m'<m''$ in the order $(i,j,k,l,h)$:

\begin{center}
    $\arraycolsep=1.6pt\def\arraystretch{1}
    \begin{array}{ccccccccc}
         %g_{ijkl}: 
         \cZ
         & \xrightarrow[\text{for all }m<m'<m'']{
            \substack{
             \text{blow-up of } \\
            D_{m'} \cap E_{mm''}, E_{mm'} \cap D_{m''}'
            }
            }
         & \cW_{ijkl}
         & \xrightarrow[h_{ijkl}]{\substack{
             \text{blow-up of }S_{mm'} \\\text{for all }m<m'
            }}
         & \cX_{ijkl}
         & \xrightarrow[g_{ijkl}]{\substack{
             \text{blow-up of } \\
            D_i,D_j,D_k,D_l
            }}
         & \X. \\
         \cup & & 
         \cup & & 
         \cup & & \\
         E_{mm'm''}, E'_{mm'm''} & &
         E_{mm'} & &
         S_{mm'} & &
    \end{array}
    $
\end{center}
We denote by $E_{mm'm''}$ and $E'_{mm'm''}$ the corresponding exceptional divisors, and deduce from the local study of these morphisms in \cref{subsec:local dominating model} that $\Sk(\mathscr{Z})$ is obtained from $\Sk(\cW)$ by adding a new $3$-cell $\tau_{m m' m''} := \langle v_{mm'},v_{mm''},v_{m'm''},v'_{mm'm''}\rangle $ for each triple $m<m'<m''$. 

We now define the combinatorial retraction of $\Sk(\cZ)$ onto $\Sk(X)$: given the 2-cell $\langle v_m,v_{m'},v_{m''}\rangle $, we identify $v_1=v_m$, $v_2=v_{m'}$ and $v_3=v_{m''}$ and contract onto $\langle v_m,v_{m'},v_{m''}\rangle $ the additional cells of $\Sk(\cZ)$ over $\langle v_m,v_{m'},v_{m''}\rangle $, via the combinatorial retraction $\pi' = \rho \circ \kappa$ constructed in \cref{subsec:local_combinatorial}. With a slight abuse of notation, we still denote this map by $\pi'$. By construction, the composition $\pi=\pi' \circ \rho_{\cZ}$
\begin{center}
    $\arraycolsep=1.6pt\def\arraystretch{1}
    \begin{array}{ccccccc}
         & \pi: 
         & X^{\an}
         & \xrightarrow{\rho_\cZ}
         & \Sk(\cZ)
         & \xrightarrow[\substack{\text{combinatorial}\\\text{retraction}}]{\pi'}
         & \Sk(X)
         \\
         & \rotatebox[origin=c]{270}{$=$} 
         & \cup & &
         \cup & & 
         \cup\\
         \text{over }\Star(v_m)' \text{\hspace{10pt}}
         & 
         %\pi_|
         %{|{\pi^{-1}(U(v_j))}}=
         \rho_{\cX_{ijkl}}:
         & \pi^{-1}(\Star(v_m)') %\U_K^{\an}
         & \xrightarrow{\rho_\cZ}
         & \Sk(\cZ)
         & \xrightarrow{\rho_{\cX_{ijkl}}}
         & \Star(v_m)' 
    \end{array}
    $
\end{center}
coincides with $\rho_{\cX_{ijkl}}$ over $\Star(v_m)'$ for any order $(i,j,k,l,m)$ on $\{1,2,3,4,5\}$. In other words, around each vertex $v_m$, the map $\pi$ is the Berkovich retraction induced by a small resolution $\cX_{ijkl}$ of $\X$, where the strict transform of $D_m$ is isomorphic to $D_m$, thus in particular $\mathring{D_m} \subset D_m$ is a torus embedding.

For each $2$-dimensional face $\langle v_m,v_{m'},v_{m''}\rangle $ of $\Sk(X)$, we denote by $\Gamma_{mm'm''}$ the graph defined in \cref{defn:disc in simplex}, and its vertices by $p_{mm'}, p_{mm''}, p_{m'm''}$ and $p_{mm'm''}$. We set

\begin{minipage}{0.45\textwidth}
$$\Gamma \coloneqq \bigcup_{m<m'<m''} \Gamma_{mm'm''}.$$
\end{minipage} \hfill
\begin{minipage}{0.5\textwidth}
\begin{figure} [H]
    \centering
\begin{tikzpicture}[tdplot_main_coords,scale=3] 
\node[left] at (-1,0,0.05) {$v_{m'}$};
\node[right] at (1,0,0) {$v_m$};
\node[right] at (0,1,0) {$v_{m''}$};
\node[left] at (0,0,0) {$p_{mm'}$};
\node[right] at (1/2,1/2,0) {$p_{mm''}$};
\node[above] at (-1/2,1/2,0) {$p_{m'm''}$};
\node[below] at (1/2,-1/4,-0.1) {$p_{mm'm''}$};
\draw[densely dotted,very thin] (0,1/3,0)--(1/2,-1/4,-0.1);
\filldraw (0,0,0) circle (0.4pt);
\filldraw (1/2,1/2,0) circle (0.4pt);
\filldraw (-1/2,1/2,0) circle (0.4pt);
\filldraw (0,1/3,0) circle (0.4pt);
\draw[gray, very thin] (0,0,0)--(0,1,0);
\draw[gray, very thin] (1,0,0)--(-1/2,1/2,0);
\draw[gray, very thin] (-1,0,0)--(1/2,1/2,0);
\draw (-1,0,0)--(1,0,0)--(0,1,0)--(-1,0,0);
\draw[thick] (0,0,0)--(0,1/3,0)--(1/2,1/2,0);
\draw[thick] (0,1/3,0)--(-1/2,1/2,0);
\end{tikzpicture}
    %\caption*{$\Gamma_{mm'm''}$}
    %\label{fig:my_label}
\end{figure}
\end{minipage} \hfill
By construction, around any point of $\Sk(X) \setminus \Gamma$, the retraction $\pi$ is equal to the Berkovich retraction induced by 
a suitable good minimal dlt model $\X_{ijkl}$ of $X$.
It follows from the results in \cite{NXY} that $\pi$ induces an integral affine structure with singularities on $\Sk(X)$. By \cref{intro:main thm Z} and \cref{intro:cor}, we obtain that this integral affine structure has no singularities outside $\Gamma$. We will furthermore prove in the next subsection that this affine structure does not extend across any edge of $\Gamma$, i.e. is indeed singular along $\Gamma$.

\subsection{Comparison with other constructions}
\label{sec:comparison}
We conclude by comparing \cref{intro:main quintic} with some other constructions in the literature.
\begin{enumerate}
    \item The affine structure we obtain in \cref{intro:main quintic} coincides with the one defined in \cite{Gross2001}, \cite[§1]{Gro05} and in \cite{HJMM}. Indeed, it follows from the construction and \cref{cor fan structure} that the affine structure induced by $\pi$ yields the fan structure (in the sense of \cite{Gro05}) coming from $D_i$ at each vertex $v_{D_i}$, and that those are glued (after removing $\Gamma$) along the maximal cells viewed as standard simplices; this is the very definition of the singular affine structure in the Gross--Siebert program, and the one considered in \cite{HJMM}. In particular, the monodromy of the singular affine structure can be computed using \cite[Proposition 2.13]{Gro05}, \cite[§2.3]{HZ}.

    \item In \cite{Li} Li considers hypersurfaces $X_t$ in the Fermat family $$\X = \{ z_0 \ldots z_{n+1} + t (z_0^{n+2} +\ldots+ z_{n+1}^{n+2}) =0 \} \subset \CP^{n+1}_{\mathbb{C}} \times \D$$ for arbitrary $n$, and constructs special Lagrangian torus fibrations on generic regions of $X_t$. More precisely, endow $X_t$ with the unique Calabi--Yau metric $\omega_t$ in the class induced by $\gO_{\CP}(1)$. Then the family of rescaled metrics $( \log \lvert t \rvert^{-1})^{-1} \omega_t$ on $X_t$ has bounded diameter and converges in the Gromov-Hausdorff sense to a smooth metric on $\mathbb{S}^n \setminus \Gamma$ as $t\rightarrow 0$; here $\mathbb{S}^n$ is triangulated as the boundary of a standard simplex of dimension $n+1$, and $\Gamma$ is the complement of the open stars of the vertices of $\mathbb{S}^n$ in the first barycentric subdivision (see \cref{defn:disc in simplex}). The metric limit obtained this way is a real Monge--Ampère metric with respect to a certain affine structure on $\mathbb{S}^n \setminus \Gamma$, which is described in \cite[\S 3.2, \S3.5]{Li}. Such integral affine structure coincides with the one in the Gross--Siebert program, hence with the one in \cref{intro:main quintic} when $n=3$.

    \item The affine structure in \cref{intro:main quintic} is semi-simple polytopal in the sense of \cite[Definition 4]{RZ21a}. Integral affine manifolds with semi-simple polytopal singularities are the tropical analog of local complete intersections in algebraic geometry, and are the relevant class of affine structures on the base of the topological SYZ fibration in the context of the Gross--Siebert program. Indeed, given such a manifold $B$, Ruddat and Zharkov construct a topological space $Y$ and torus fibration $Y\rightarrow B$ with discriminant of codimension 2 in $B$, inducing the given affine structure. In \cite{RZ21a} the authors describe the strategy in the 3-dimensional case; the general results will appear in \cite{RZ}, building on the local constructions of \cite{RZ21}. 

In the case of the quintic 3-fold, let $p = p_{ijk}$ be a vertex of the discriminant contained in the interior of a 2-face $\tau= \langle v_i, v_j,v_k \rangle$, and $q = p_{ik}$ a vertex contained in the interior of an edge $e= \langle v_i, v_k \rangle$ of $\tau$. Up to relabelling, we may assume that the lattice $L_p$ of invariant vectors around $p$ (i.e. the sections of the sheaf of integral affine tangent vectors on a small neighbourhood of $p$)  is freely generated by $v_i$ and $v_j$, in which case the three monodromy matrices around $p$ are of the form $T = \Id + 5 v_k^{\vee} \otimes w$ for some primitive $w \in L_p$. Hence, writing $L(p) = L_p$ and $L^{\vee}(p) = 5 L_p^{\bot}$, as well as $L(q) = L_q$ and $L^{\vee}(q) = 5 L_q^{\bot}$ we see that we are in the setting of \cite{RZ21a}: the singularities of the affine structure are semi-simple abelian. Moreover, the vertices $p_{ijk}$ are negative, while the vertices $p_{ik}$ are positive.
\\Note that $\tau$ can be canonically realized inside $L_p$, sending the vertex $v_k$ to the origin; in addition we set $\tau^{\vee} = \langle  0, 5 v_k^{\vee} \rangle $ to be the convex hull of $0$ and $ 5 v_k^{\vee}$ in $\subset L^{\vee}(p)$. The three loops described above are canonically indexed by the edges of $\tau$, and hence by the pairs $(e,f)$, with $e$ an edge of $\tau$ and $f$ the edge of $\tau^{\vee}$. The upshot of working with $5 L_p^{\bot}$ instead of $L_p^{\bot}$ (and similarly for $q$) is now that the monodromy along the loop $\gamma_{e,f}$ is now simply given by the formula $T(\gamma_{e,f}) = \Id + e \otimes f$. 
\\Similarly for $q$, we realize the edge $e_{ik}$ inside $L_q$ as the unit segment and set $e^{\vee} = \langle 0, 5 v_i^{\vee}, 5 v_k^{\vee} \rangle  \subset L^{\vee}(q)$. Then we may once again label the three loops around $q$ by pairs $(e, f)$ with $e= e_{ik}$ and $f$ an edge of $e^{\vee}$, so that the formula $T(\gamma_{e,f}) = \Id + e \otimes f$ holds.
\\Since $e_{ik}$ is a face of $\tau$, we conclude from this that our affine structure is semi-simple polytopal.

\item\label{rem:comparison} In \cite{Ruan2001}, Ruan develops a symplectic method based on gradient flow and constructs a Lagrangian torus fibration for the Fermat family of quintic Calabi--Yau hypersurfaces $$\X = \{ z_1 \ldots z_5 + t (z_1^4+ \ldots +z_5^4) =0 \} \subset \mathbb{P}^4_\mathbb{C} \times \D,$$ later extended to generic quintic hypersurfaces in toric varieties. The idea is to realize $\Sk(\X)$ very explicitly as the boundary of the standard 4-simplex $\tau^4 = \{ \sum_{i=1}^5 w_i =1 \} \subset \R_{\geqslant 0}^5$, and to spread the map:
$$ F : \X_0 \fl \partial \tau^4 $$
$$[ z_1:\ldots: z_5 ] \longmapsto \Bigg(\frac{\lvert z_1 \rvert^2}{\lVert z\rVert^2 } ,\ldots, \frac{\lvert z_5 \rvert^2}{\lVert z\rVert^2 }\Bigg)$$
to the nearby fibers using a gradient flow. This yields a Lagrangian fibration on the $\X_t$'s for small enough $t$, which Ruan expects to be deformable towards a special Lagrangian fibration. In addition, he describes the discriminant locus and the monodromy transformations of the expected special Lagrangian fibration, assuming that the singular locus is of codimension $2$. The predictions in \cite[\S 4.4, \S 4.5]{Ruan2001} match the constructions in \cite{Gross2001}, hence those in \cref{intro:main quintic}.
Note that both in Ruan's and in Gross' aforementioned works, the polyhedral decomposition on $\mathbb{S}^3$ is induced by the intersection complex of the central fiber $\X_0$ (i.e., vertices correspond to zero-dimensional strata of the special fiber and so on); we work instead with the dual intersection complex associated with $\X_0$, which is isomorphic to the intersection complex in the examples we are considering.
\item We are grateful to Yuto Yamamoto for the following observation: the retraction from \cref{intro:main quintic} does not coincide with the one from \cite[thm. A]{PS}. Indeed, let $p \in \Gamma$ be a point in the discriminant that is not a vertex. Then the preimage of $p$ by the combinatorial retraction from \cref{subsec:local_combinatorial} is contained in a $2$-dimensional plane, containing the tangent vector of the line connecting $v_3$ and the point on the edge $<v_1, v_2>$ parametrized by $t$, as in the picture \cref{equ:combinatorial retraction vertx}. In particular, the latter depends on $t$.
\\On the other hand, the preimage of $p$ by the tropical contraction from \cite{Ya} is contained in the $2$-plane spanned by the directions $v_1$ and $v_2$, and is thus independent of $t$.
\item \label{rem:K3} Finally, we note that the constructions in \cref{sec:quintic 3-fold} have an analogue in dimension $2$ which was considered in \cite[\S 4.2.5]{KontsevichSoibelman}. Indeed, consider the degeneration of quartic $\textrm{K}3$ surfaces $\cX= \{ z_1 z_2 z_3 z_4 +tF_4(z_1,z_2,z_3,z_4) =0\} \subset \mathbb{P}^3_R$, where $F_4$ is a generic homogeneous polynomial of degree 4, and the resolution $h: \cZ \rightarrow \cX$ obtained by blowing-up the $24$ singular points of $\cX$, namely the points $\{z_i=0, z_j=0, t=0, F_4 =0\}$ for any $i \neq j$. The special fiber of $\cZ$ is $\cZ_k= \sum_{i=1}^4 D_i + \sum_{q=1}^{24} E_q$, where we abusively still denote by $D_i$ the strict transform of $D_i$ inside $\cZ$, and the $E_q$'s are the exceptional divisors. The skeleton $\Sk(\cZ)$ is the boundary of a tetrahedron with four additional $2$-cells glued along each edge of the tetrahedron; following \cite{KontsevichSoibelman} we call such $2$-cells wings. Kontsevich and Soibelman define a retraction
$$\rho: X^{\textrm{an}} \xrightarrow{\rho_\cZ} \Sk(\cZ) \xrightarrow{\pi} \Sk(X) = \Sk(\cX)$$ where $\rho_\cZ$ is the Berkovich retraction onto the skeleton $\Sk(\cZ)$, and $\pi$ is a contraction of the 24 wings of $\Sk(\cZ)$ to the sphere given as follows.
%We parametrize each edge $e$ of the essential skeleton with the interval $[-1,1]$, and each wing $W_q$ glued to $e$ with the $2$-simplex $\tau_q$ in $\R^2_{(x,y)}$ bounded by $e$ and $0 \leqslant y \leqslant 1-|x|$. 
For each edge $e$ of $\mathcal{D}(\X_k)$ we choose a point $a_e=(a_e,0)$ in the interior of $e$, and define the retraction of $W_q$ onto $e$ by

\begin{minipage}{.5\linewidth}
\centering
\begin{tabular}{cll}
    & $(x+y,0)$ & if $x+y \leqslant a_e$ \\
    $\pi: \, (x,y) \mapsto$ & $(x-y,0)$ & if $x-y \geqslant a_e$ \\
    & $(a_e,0)$ & otherwise.
\end{tabular}
\end{minipage} \hspace{20pt}
\begin{minipage} {.5\linewidth}
\begin{tikzpicture}[scale=0.8]
\filldraw (0,0) circle (1.2pt);
\filldraw (-2,0) circle (1pt);
\filldraw (2,0) circle (1pt);
\filldraw (0,2) circle (1pt);
%\filldraw (-1,-2) circle (1pt);
\node[below] at (0,-1) {Picture for $a_e=0$};
\node[below] at (0,0) {$a_e$};
%\node[above] at (2,0) {$v_i$};
\node[right] at (0,2) {$v_{q}$};
\node[above] at (3,0) {\textcolor{gray}{$x$}};
\node[right] at (0,3) {\textcolor{gray}{$y$}};
%\node[below, left] at (-1.4,-2.8) {\textcolor{gray}{$z$}};
\draw (-2,0) -- (0,2) -- (2,0) -- (-2,0);
\draw[fill=cyan, fill opacity=0.2] (-2,0) -- (0,0) -- (-1,1);
\draw[fill=yellow, fill opacity=0.2] (1,1) -- (0,2) -- (-1,1) -- (0,0);
\draw[fill=orange, fill opacity=0.2] (2,0) -- (0,0) -- (1,1);
\draw [->] (-0.9,0.9) -- (-0.1,0.1); 
\draw [->] (-1.1,0.7) -- (-0.5,0.1); 
\draw [->] (-1.3,0.5) -- (-0.9,0.1); 
\draw [->] (-1.5,0.3) -- (-1.3,0.1); 
\draw (0,1.7) -- (0,0.2); 
\draw (-0.3,1.5) -- (0,0.2); 
\draw (-0.5,1.3) -- (0,0.2); 
\draw (-0.7,1.1) -- (0,0.2);
\draw (0,1.7) -- (0,0.2); 
\draw (0.3,1.5) -- (0,0.2); 
\draw (0.5,1.3) -- (0,0.2); 
\draw (0.7,1.1) -- (0,0.2);
\draw [->] (0.9,0.9) -- (0.1,0.1); 
\draw [->] (1.1,0.7) -- (0.5,0.1); 
\draw [->] (1.3,0.5) -- (0.9,0.1); 
\draw [->] (1.5,0.3) -- (1.3,0.1); 
\draw [gray] (2,0) -- (3,0);
\draw [gray] (0,0) -- (0,3);
\end{tikzpicture}
\end{minipage}
\\We note that 
\begin{itemize}
\itemsep0pt
    \item[-] over the interior of any $2$-dimensional face $\tau \subset \Sk(\cX)$, $\rho$ is equal to $\rho_\cZ$, thus it is an affinoid torus fibration
    %over $\textrm{Int}(\tau)$ 
    %By the properties of retractions associated to an snc model over maximal dimension faces 
    (see \cref{ex:affinoid torus over max face}).
    %, $\rho$ is an affinoid torus fibration over $\textrm{Int}(\tau)$;
    \item[-] Around any vertex $v_{D_i}$, $\rho$ is equal to $\rho_{\X_{i}}$, where $\cX_i$ is the small resolution of $\cX$ obtained blowing-up one after the other all the (strict transforms of the) divisors of $\cX_k$ except $D_i$. By \cref{intro:cor}, $\rho$ is an affinoid torus fibration around $v_{D_i}$, and the affine structure induced there is the fan structure induced by $D_i \simeq \mathbb{P}^2$, by \cref{cor fan structure}.
\end{itemize}
As a result, the integral affine structure induced on $\Sk(\X)$ by $\rho$ away from the $a_e$'s coincides with the one from \cite[§1]{Gro05}, and by \cite[Proposition 2.13]{Gro05}, the monodromy around $a_e$ is conjugate to the matrix $T = \begin{pmatrix} 1 & 4\\ 0 & 1 \end{pmatrix}$, as claimed in \cite{KontsevichSoibelman}.
\end{enumerate}
\bibliographystyle{alpha}
\small
\bibliography{biblio.bib}

\begin{thebibliography}{KKMSD73}

\bibitem[BB96]{BB}
V.~V. Batyrev and L.~A. Borisov.
\newblock On {C}alabi-{Y}au complete intersections in toric varieties.
\newblock In {\em Higher-dimensional complex varieties ({T}rento, 1994)}, pages
  39--65. de Gruyter, Berlin, 1996.

\bibitem[Ber99]{Be}
V.~Berkovich.
\newblock Smooth p-adic analytic spaces are locally contractible.
\newblock {\em Invent. Math. 137}, pages 1--84, 1999.

\bibitem[BFJ15]{BFJ1}
S.~Boucksom, C.~Favre, and M.~Jonsson.
\newblock Solution to a non-{A}rchimedean {M}onge-{A}mpère equation.
\newblock {\em J. Amer. Math. Soc. 28}, pages 617--667, 2015.

\bibitem[BFJ16]{BFJ2}
S.~Boucksom, C.~Favre, and M.~Jonsson.
\newblock Singular semipositive metrics in non-{A}rchimedean geometry.
\newblock {\em J. Algebraic Geom. 25}, pages 77--139, 2016.

\bibitem[BGPS14]{BPS}
J.~Burgos~Gil, P.~Philippon, and M.~Sombra.
\newblock Arithmetic geometry of toric varieties. {M}etrics, measures and
  heights.
\newblock {\em Ast\'{e}risque}, (360):vi+222, 2014.

\bibitem[BJ17]{BJ}
S.~Boucksom and M.~Jonsson.
\newblock Tropical and non-{A}rchimedean limits of degenerating families of
  volume forms.
\newblock {\em Journal de l'Ecole polytechnique}, pages 87--139, 2017.

\bibitem[CLS11]{CoxLittleSchenck2011}
D.~A. Cox, J.~B. Little, and H.~K. Schenck.
\newblock {\em Toric varieties}, volume 124 of {\em Graduate Studies in
  Mathematics}.
\newblock American Mathematical Society, Providence, RI, 2011.

\bibitem[dFKX17]{deFernexKollarXu2012}
T.~{de}~{Fernex}, J.~Koll{\'a}r, and C.~Xu.
\newblock The dual complex of singularities.
\newblock In {\em Higher dimensional algebraic geometry, in honour of Professor
  Yujiro Kawamatas 60th birthday}, volume~74, pages 103--130. Adv. Stud. Pure
  Math., 2017.

\bibitem[EM21]{EvansMauri}
J.D. Evans and M.~Mauri.
\newblock Constructing local models for {L}agrangian torus fibrations.
\newblock {\em Ann. H. Lebesgue}, 4:537--570, 2021.

\bibitem[Eng18]{Eng}
P.~Engel.
\newblock Looijenga’s conjecture via integral-affine geometry.
\newblock {\em J. Differential Geom. 109}, pages 467--495, 2018.

\bibitem[Ful93]{Fu}
W.~Fulton.
\newblock {\em Introduction to toric varieties}.
\newblock Princeton University Press, 1993.

\bibitem[GHK15]{GHK}
M.~Gross, P.~Hacking, and S.~Keel.
\newblock Mirror symmetry for log {C}alabi-{Y}au surfaces {I}.
\newblock {\em Inst. Hautes Etudes Sci. Publ. Math. 155}, pages 65--168, 2015.

\bibitem[GJKM19]{GJKM}
W.~Gubler, P.~Jell, K.~K\"{u}nnemann, and F.~Martin.
\newblock Continuity of plurisubharmonic envelopes in non-archimedean geometry
  and test ideals.
\newblock {\em Ann. Inst. Fourier (Grenoble)}, 69(5):2331--2376, 2019.
\newblock With an appendix by Jos\'{e} Ignacio Burgos Gil and Mart\'{\i}n
  Sombra.

\bibitem[GR04]{GabberRamero}
O.~{Gabber} and L.~{Ramero}.
\newblock Foundations for almost ring theory -- release 6.95.
\newblock {\em Preprint}, 2004.

\bibitem[Gra62]{Gra}
H.~Grauert.
\newblock Uber {M}odifikationen und exzeptionelle analytische {M}engen.
\newblock {\em Math. Ann. 146}, pages 331--368, 1962.

\bibitem[Gro61]{EGA3.1}
A.~Grothendieck.
\newblock \'{E}l\'{e}ments de g\'{e}om\'{e}trie alg\'{e}brique. {III}.
  \'{E}tude cohomologique des faisceaux coh\'{e}rents. {I}.
\newblock {\em Inst. Hautes \'{E}tudes Sci. Publ. Math.}, (11):167, 1961.

\bibitem[Gro01]{Gross2001}
M.~Gross.
\newblock Topological mirror symmetry.
\newblock {\em Invent. Math.}, 144(1):75--137, 2001.

\bibitem[Gro05]{Gro05}
M.~Gross.
\newblock Toric degenerations and {B}atyrev-{B}orisov duality.
\newblock {\em Math. Ann.}, 333(3):645--688, 2005.

\bibitem[GS06]{GrossSiebert2006}
M.~Gross and B.~Siebert.
\newblock {Mirror Symmetry via Logarithmic Degeneration Data I}.
\newblock {\em J. Diff. Geom.}, 72(2):169--338, 2006.

\bibitem[GW00]{GW}
M.~Gross and P.~Wilson.
\newblock Large complex structure limits of {K}3 surfaces.
\newblock {\em J. Differential Geom. 55}, pages 475--546, 2000.

\bibitem[HJMM22]{HJMM}
J.~Hultgren, M.~Jonsson, E.~Mazzon, and N.~McCleerey.
\newblock Tropical and non-archimedean {M}onge–{A}mpère equations for a
  class of {C}alabi–{Y}au hypersurfaces.
\newblock {\em ArXiv e-prints}, 2022.

\bibitem[HZ02]{HZ}
C.~{Haase} and I.~{Zharkov}.
\newblock {Integral affine structures on spheres and torus fibrations of
  Calabi-Yau toric hypersurfaces I}.
\newblock {\em ArXiv e-prints}, 2002.

\bibitem[KKMSD73]{KK}
G.~Kempf, F.~Knudsen, D.~Mumford, and B.~Saint-Donat.
\newblock {\em Toroidal {E}mbeddings {I}}.
\newblock Lect. Notes in Math., Springer-Verlag, 1973.

\bibitem[KM98]{KM}
J.~Koll\'{a}r and S.~Mori.
\newblock {\em Birational geometry of algebraic varieties}, volume 134 of {\em
  Cambridge Tracts in Mathematics}.
\newblock Cambridge University Press, 1998.

\bibitem[Knu71]{Knu}
D.~Knutson.
\newblock {\em Algebraic {S}paces}.
\newblock Lect. Notes in Math., Springer-Verlag, 1971.

\bibitem[Kol13]{Kollar2013}
J.~Koll{\'a}r.
\newblock {\em Singularities of the minimal model program}, volume 200 of {\em
  Cambridge Tracts in Mathematics}.
\newblock Cambridge University Press, 2013.

\bibitem[KS06]{KontsevichSoibelman}
M.~Kontsevich and Y.~Soibelman.
\newblock {\em Affine Structures and Non-Archimedean Analytic Spaces}, pages
  321--385.
\newblock Birkh{\"a}user Boston, 2006.

\bibitem[{Li}20]{Li2}
Y.~{Li}.
\newblock {Metric SYZ conjecture and non-archimedean geometry}.
\newblock {\em ArXiv e-prints}, 2020.

\bibitem[Li22]{Li}
Y.~Li.
\newblock Strominger-{Y}au-{Z}aslow conjecture for {C}alabi-{Y}au hypersurfaces
  in the {F}ermat family.
\newblock {\em Acta Math.}, 229(1):1--53, 2022.

\bibitem[MN15]{MN}
M.~Musțăta and J.~Nicaise.
\newblock Weight functions on non-archimedean analytic spaces and the
  {K}ontsevich-{S}oibelman skeleton.
\newblock {\em Algebraic Geom. 2, no. 3}, pages 365--404, 2015.

\bibitem[Mor93]{Morrison1993}
D.~R. Morrison.
\newblock Mirror symmetry and rational curves on quintic threefolds: a guide
  for mathematicians.
\newblock {\em Journal of the American Mathematical Society}, 6(1):223--223,
  1993.

\bibitem[Mus02]{Mu}
M.~Musțăta.
\newblock Vanishing theorems on toric varieties.
\newblock {\em Tohoku Math. J. 54}, page 451–470, 2002.

\bibitem[Nak04]{Na}
N.~Nakayama.
\newblock {\em Zariski-decomposition and abundance}.
\newblock MSJ Memoirs, vol. 14, Mathematical Society of Japan, 2004.

\bibitem[NX16]{NX}
J.~Nicaise and C.~Xu.
\newblock The essential skeleton of a degeneration of algebraic varieties.
\newblock {\em Amer. Math. J. 138(6)}, pages 1645--1667, 2016.

\bibitem[NXY19]{NXY}
J.~Nicaise, C.~Xu, and T.~Y. Yu.
\newblock The non-archimedean {SYZ} fibration.
\newblock {\em Compositio Mathematica}, 155(5):953--972, 2019.

\bibitem[Oda18]{Odaka2018}
Y.~Odaka.
\newblock {Tropical Geometric Compactification of Moduli, II: Ag Case and
  Holomorphic Limits}.
\newblock {\em International Mathematics Research Notices}, 2018.

\bibitem[PS22]{PS}
L.~Pille-Schneider.
\newblock Hybrid toric varieties and the non-archimedean {S}{Y}{Z} fibration on
  {C}alabi-{Y}au hypersurfaces.
\newblock {\em ArXiv e-prints}, 2022.

\bibitem[RS20]{RS20}
H.~Ruddat and B.~Siebert.
\newblock Period integrals from wall structures via tropical cycles, canonical
  coordinates in mirror symmetry and analyticity of toric degenerations.
\newblock {\em Publ. Math. Inst. Hautes \'{E}tudes Sci.}, 132:1--82, 2020.

\bibitem[Rua01]{Ruan2001}
W.~Ruan.
\newblock Lagrangian torus fibration of quintic hypersurfaces. {I}. {F}ermat
  quintic case.
\newblock In {\em Winter {S}chool on {M}irror {S}ymmetry, {V}ector {B}undles
  and {L}agrangian {S}ubmanifolds ({C}ambridge, {MA}, 1999)}, volume~23 of {\em
  AMS/IP Stud. Adv. Math.}, pages 297--332. Amer. Math. Soc., Providence, RI,
  2001.

\bibitem[RZ]{RZ}
H.~Ruddat and I.~Zharkov.
\newblock Topological {S}trominger--{Y}au--{Z}aslow fibrations.
\newblock {\em In preparation}.

\bibitem[RZ21a]{RZ21a}
H.~Ruddat and I.~Zharkov.
\newblock Compactifying torus fibrations over integral affine manifolds with
  singularities.
\newblock {\em 2019-20 MATRIX Annals, MATRIX Book Series 4}, pages 609--622,
  2021.

\bibitem[RZ21b]{RZ21}
H.~Ruddat and I.~Zharkov.
\newblock Tailoring a pair of pants.
\newblock {\em Adv. Math.}, 381, 2021.

\bibitem[Thu07]{Th}
A.~Thuillier.
\newblock Géométrie toroïdale et géométrie analytique non archimédienne.
  {A}pplication au type d’homotopie de certains schémas formels.
\newblock {\em Manuscr. Math. 123 no. 4}, pages 381--451, 2007.

\bibitem[Yam]{Ya2}
Y.~Yamamoto.
\newblock Non-archimedean {S}{Y}{Z} fibrations via tropical contractions.
\newblock {\em In preparation}.

\bibitem[{Yam}21]{Ya}
Y.~{Yamamoto}.
\newblock {Tropical contractions to integral affine manifolds with
  singularities}.
\newblock {\em ArXiv e-prints}, 2021.

\end{thebibliography}
\end{document}